\renewcommand{\labelenumi}{$\mathrm{(\roman{enumi})}$}
\renewcommand{\labelenumii}{$\mathrm{(\alph{enumii})}$}
\title{Lagrangian cobordism and shadow distance \\ in Tamarkin category\footnote{2020 Mathematics Subject Classification: 53D12, 55N31, 35A27 \newline Keywords: Lagrangian cobordism, interleaving distance, microlocal sheaf theory}}
\author{Tomohiro Asano \and Yuichi Ike \and Wenyuan Li}
\date{\today}
\begin{document}
\maketitle

\begin{abstract}
We study exact Lagrangian cobordisms between exact Lagrangians in a cotangent bundle in the sense of Arnol'd, using microlocal theory of sheaves. We construct a sheaf quantization for an exact Lagrangian cobordism between Lagrangians with conical ends, prove an iterated cone decomposition of the sheaf quantization for cobordisms with multiple ends, and show that the interleaving distance of sheaves is bounded by the shadow distance of the cobordism. Using the result, we prove a rigidity result on Lagrangian intersection by estimating the energy cost of splitting and connecting Lagrangians through cobordisms. 
\end{abstract}

\section{Introduction}\label{section:introduction}
Lagrangian cobordisms between Lagrangian submanifolds were first introduced by Arnol'd \cite{Arnold80I,Arnold80II}. Recently, Biran--Cornea \cite{BC13,BC14} discovered the deep relation between Lagrangian cobordisms and Fukaya categories, and Cornea--Shelukhin \cite{CS19} introduced the shadow distance of Lagrangian cobordisms, which was then used by Biran--Cornea--Shelukhin \cite{BCS18} to show rigidity results of Lagrangian intersections. 

On the other hand, the microlocal theory of sheaves developed by Kashiwara--Schapira \cite{KS90} has been shown to be a powerful tool in symplectic geometry following the pioneering work of Nadler--Zaslow \cite{NZ,Nad} and Tamarkin \cite{Tamarkin}. 
Sheaf-theoretic methods have been effectively applied to symplectic geometry, for example,  \cite{GKS,Chiu17,Gu23,Shende19conormal,GPS24microlocal,zhang2024capacities}, and have advantage in the studies of for example degenerate intersections of Lagrangians \cite{Ike19,AISQ} and non-smooth limits of Lagrangians \cite{AI24completeness,GV24coisotropic}.

In this paper, we apply sheaf theory to the study of Lagrangian cobordisms, explore the algebraic relations of sheaves induced by Lagrangian cobordisms, and compare the interleaving distance introduced by the first two authors \cite{AI20} with the shadow distance of Lagrangian cobordisms \cite{CS19}, which leads to rigidity results of Lagrangian intersections.

\subsection{Main results}

Let $M$ be a closed manifold, $T^*M$ be its cotangent bundle and $T^{*,\infty}M$ be its cosphere bundle.
Let $L_0,\dots,L_{m-1}$ and $L'_0,\dots,L'_{n-1} \subset T^*M$ be exact Lagrangian submanifolds with conical ends. 
An exact Lagrangian cobordism with conical ends $V \subset T^*(M \times \bR)$ between $(L_0,\dots,L_{m-1})$ and $(L'_0,\dots,L'_{n-1})$ is an exact Lagrangian with conical end $V_\infty \subset T^{*,\infty}(M \times \bR)$ such that $V_\infty \cap T^{*,\infty}(M \times [-1, 1])$ is compact and
\begin{align*}
    V \cap T^*(M \times (-\infty, -1]) & = \bigcup_{i=0}^{m-1}L_i \times (-\infty, -1] \times \{i\}, \\ 
    V \cap T^*(M \times [+1, +\infty)) & = \bigcup_{j=0}^{n-1}L'_j \times [+1, +\infty) \times \{j\}.
\end{align*}

In this study, we use the Tamarkin category $\cT(T^*M)$, which is a localization of the sheaf category $\Sh(M \times \bR)$. 
We can equip $\cT(T^*M)$ with an interleaving distance $d'_{\cT(T^*M)}$ that is stable under Hamiltonian isotopies \cite{AI20} (see \cref{subsec:Tamarkin} for details). 
For objects in $\cT(T^*M)$, their reduced microsupports are subsets in $T^*M$.
For any exact Lagrangian $L \subset T^*M$ with conical ends, we can construct a simple sheaf $F_L \in \cT(T^*M)$ with reduced microsupport on $L$ \cite{Gu12,JT_brane,Vi19,Gu23}.
They are called sheaf quantizations of $L$.

Our first result is the following existence theorem of a sheaf quantization of a Lagrangian cobordism, namely, that there exists a simple sheaf with reduced microsupport in the given exact Lagrangian cobordism.

\begin{theorem}[{see \cref{thm:sheaf-quan-cob-filling}}]
    Let $V \subset T^*(M \times \bR)$ be an exact Lagrangian cobordism with conical ends between $(L_0,\dots,L_{m-1})$ and $(L'_0,\dots,L'_{n-1})$ with vanishing Maslov class and relative second Stiefel--Whitney class. 
    Then there is a simple sheaf quantization $F \in \cT(T^*(M \times \bR))$.
\end{theorem}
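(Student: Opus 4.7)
My plan is to apply the general existence theorem for sheaf quantizations of exact Lagrangians (\cite{Gu12, JT17, Vi19, Gu23}) to $V$, handling the non-conical cylindrical ends at $t \to \pm\infty$ by explicit product sheaves that are then glued to a construction in the middle. In the Tamarkin framework, a sheaf quantization $F \in \cD(M \times \bR)$ of $V$ is (essentially) a simple sheaf on $(M \times \bR) \times \bR_s$ whose reduced microsupport is the conification $\widehat{V} \subset T^*(M \times \bR \times \bR_s)$ built from a primitive $f_V \colon V \to \bR$ of the Liouville form. The vanishing of the Maslov and relative second Stiefel--Whitney classes of $V$ passes to the analogous conditions on $\widehat{V}$ that the existence theorem requires.

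The key observation is that on each cylindrical end, $\widehat{V}$ splits compatibly with the cobordism structure. On $T^*(M \times (-\infty, -1])$, the Liouville form $\xi \, dx + \tau \, dt$ restricts to $\xi \, dx + i \, dt$ on the $i$-th component, so $f_V = f_{L_i}(x, \xi) + i \cdot t + c_i$ there for some constant $c_i$, and the conification decomposes as a disjoint union of products $\widehat{L_i} \times \ell_i$ with $\ell_i$ the conic half-line of slope $i$ in $T^*(\bR_t \times \bR_s)$; an analogous statement holds at $t \to +\infty$. Thus on each end I take the pullback of $F_{L_i} \in \cD(M)$ (which exists by the cited theorems) via $(x, t, s) \mapsto (x, s + i t + c_i)$, tensored with a constant sheaf supported where $t \leq -1$ (resp.\ $t \geq 1$ at the right end); these are simple with the correct microsupport. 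On the compact middle $M \times [-1, 1]$, the existence theorem of Guillermou applied to $V \cap T^*(M \times [-1, 1])$ (viewed as an exact Lagrangian with prescribed boundary Legendrians at $t = \pm 1$) produces a simple sheaf extending the end data. The three sheaves glue into a global $F \in \cD(M \times \bR)$.

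\textbf{Main obstacle.} The hard part is the gluing along the interfaces $t = \pm 1$: the end sheaves and the middle sheaf must agree up to isomorphism on a neighborhood of each slice to form a global object, and the result must remain simple along all of $\widehat{V}$. This is addressed by the microlocal uniqueness of simple sheaves with given connected Lagrangian microsupport, once the primitive constants $c_i, c'_j$ are chosen consistently with $f_V$. An alternative route, closer in spirit to the iterated cone decomposition mentioned in the introduction, is to build $F$ directly from $\bigoplus_i F_{L_i}$ and $\bigoplus_j F_{L'_j}$ by an inductive mapping-cone construction reflecting the combinatorics of the cobordism, which bypasses the stratified middle construction at the cost of a more delicate homological argument.
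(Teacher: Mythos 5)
Your proposal takes a genuinely different route from the paper, but it has gaps that go beyond the ``main obstacle'' you flag.

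\textbf{The middle piece is not compact.} You propose to apply ``the existence theorem of Guillermou'' to $V \cap T^*(M \times [-1,1])$, viewed as a compact Lagrangian with prescribed boundary Legendrians. But $V$ has conical ends: the intersection $V \cap T^*(M \times [-1,1])$ is still non-compact in the fiber direction, with a conical end whose Legendrian boundary at $|\xi|\to\infty$ is compact but nonempty. Guillermou's original theorem (compact Legendrian, no prescribed boundary) does not apply here, and ``with prescribed boundary Legendrians'' is not the form of that theorem. What is actually needed is the non-compact generalization of the doubling construction (\cite[Cor.~3.16]{Li23} in the paper's notation), whose hypothesis is the existence of a tubular neighborhood of $V$ of uniformly positive radius with respect to a complete adapted metric on $T^*(M\times\bR_s)$. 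Verifying this hypothesis for the cobordism $V$ is the paper's Lemma \ref{lem:cob-nbhd-filling}, and it is the crux: once you are forced to invoke the non-compact machinery anyway, you might as well apply it to $V$ in its entirety, which is exactly what the paper does, making the end/middle decomposition unnecessary.

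\textbf{The gluing is not addressed by microlocal uniqueness.} Uniqueness of a simple sheaf with given connected Lagrangian microsupport, up to isomorphism, does not let you glue: to patch your three candidate sheaves into a global object you need compatible isomorphisms on overlaps, not merely isomorphisms, and this is precisely the local-to-global problem that the Kashiwara--Schapira stack and the Guillermou doubling functor are designed to solve. Note also that $\Lambda_V$ is disconnected when the cobordism has several ends, so even the uniqueness input you want is not available globally. The paper avoids gluing entirely: it produces a doubled sheaf $F_{V,\mathrm{dbl}}$ on $M\times\bR_s\times\bR_t\times(0,\infty)$ globally, and then cuts off a single copy $\Lambda_V$ by restricting to a suitable slab $t<c$ over each bounded $s$-region and compressing via a GKS kernel (Proposition~\ref{prop:quan-cob-single-bounded-filling}), with a limit over $R$ at the end. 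Two smaller points: tensoring with the constant sheaf on $\{t\le -1\}$ would add singular support along the hypersurface $t=-1$, so the end sheaves as you describe them are not simple along $\widehat V$ alone; and you do not address properness of the conification $\Lambda_V$ inside $\rT(M\times\bR_s\times\bR_t)$, which requires choosing a primitive so that $V$ is lower exact (the Jin--Treumann argument the paper invokes). The decomposition of $\widehat V$ over the ends into products, and the identification of the correct primitives there, are geometrically correct and are indeed implicitly used in the paper's later iterated-cone theorem; but as a proof of existence of the sheaf quantization the argument as written does not close.
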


\begin{remark}[see \cref{rem:quan-jintreumann}]
    Jin--Treumann \cite[Subsection 1.11.4]{JT_brane} have mentioned that such a theorem should be true after deforming the Lagrangian cobordism so that it becomes lower exact and satisfies \cite[Assumption 3.7]{JT_brane}, which requires an extra proof as $V$ is not eventually conical in $T^*(M \times \bR)$. Here, we obtain such a result via a different approach without using \cite[Assumption 3.7]{JT_brane}.
\end{remark}

Our second result is the following iterated cone decomposition theorem of a sheaf quantization of a Lagrangian cobordism in the Tamarkin category $\cT(T^*M)$. We write
\begin{multline*}
    [ G_0 \to G_1 \to \dots \to G_{k-1} \to G_k ] \\
    \coloneqq \Cone(\Cone( \cdots \Cone(G_0[-k] \to G_1[1-k]) \to \dots \to G_{k-1}[-1]) \to G_k).
\end{multline*}

\begin{theorem}[{see \cref{theorem:iterated_cone}}]\label{theorem:iterated_cone_intro}
    Let $F$ be a simple sheaf quantization of Lagrangian cobordism $V$ between $(L_0,\dots,L_{m-1})$ and $(L'_0,\dots,L'_{n-1})$.
    Set $F_s \coloneqq F|_{M \times \bR_t \times \{s\}}$. 
    Then one has iterated decompositions in $\cT(T^*M)$
    \begin{align*}
        F_{-1} & =
        \ld F_{L_0} \to F_{L_1} \to \dots \to F_{L_{m-2}} \to F_{L_{m-1}} \rd, \\ 
        F_{+1} & =
        \ld F_{L'_{n-1}} \to F_{L'_{n-2}} \to \dots \to F_{L'_1} \to F_{L'_0} \rd,
    \end{align*}   
    where $F_{L_i}$ (resp.\ $F_{L'_j}$) is a simple sheaf quantization of $L_i$ (resp.\ $L'_j)$.    
\end{theorem}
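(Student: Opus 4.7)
The two decompositions are symmetric under reversing the $s$-coordinate, so we focus on $F_{-1}$. The geometric input is that, on the region $s \leq -1$, the reduced microsupport of $F$ is the disjoint union $\bigsqcup_{i=0}^{m-1} L_i \times \{(s, i) : s \leq -1\}$, with the components located at strictly distinct slopes $\sigma/\tau = 0, 1, \dots, m-1$ in $T^* \bR \times T^* \bR_t$. The plan is to upgrade this disjointness of microsupports into an iterated cone decomposition of $F$ via microlocal cut-off, and then restrict to the slice $\{s = -1\}$.

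\textbf{Microsupport filtration.}
For each $k = 1, \dots, m-1$, the hyperplane $\{\sigma = (k - 1/2)\tau\}$ cleanly separates the microsupport components at heights $<k$ from those at heights $\geq k$. A microlocal cut-off along this hyperplane --- realized, for example, by convolution with the constant sheaf on the half-space $\{t + (k-1/2)s \geq c_k\}$ for suitable $c_k$, or equivalently by a Guillermou--Kashiwara--Schapira kernel of a homogeneous Hamiltonian isotopy separating the heights --- produces a distinguished triangle
\[
    F_{<k} \to F \to F_{\geq k} \xrightarrow{+1}
\]
in $\cD(M \times \bR)$ localized near the slice, where $F_{<k}$ (resp.\ $F_{\geq k}$) has microsupport only on the low-height (resp.\ high-height) components. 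The strict separation of slopes guarantees that both pieces are simple. Iterating, each successive cofiber $\mathrm{gr}_k F$ is a simple sheaf with microsupport on the single cylinder $L_k \times \{(s, k)\}$, and hence, by the uniqueness of simple sheaf quantizations, is the cylindrical extension of a simple sheaf quantization $F_{L_k}$ of $L_k$ in $\cD(M)$.

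\textbf{Restriction and main obstacle.}
Restriction to $\{s = -1\}$ is transverse to each microsupport component (the $s$-projection is a submersion on each cylinder), so it sends $\mathrm{gr}_k F$ to (a shift of) $F_{L_k}$ in $\cD(M)$; assembling the iterated distinguished triangles produces $F_{-1} = \ld F_{L_0} \to F_{L_1} \to \cdots \to F_{L_{m-1}} \rd$, with the specific indexing dictated by the Tamarkin convention (morphisms oriented by increasing $\sigma$-height). The analogous argument for $F_{+1}$ follows by the reflection $s \mapsto -s$, which reverses the ordering and yields $\ld F_{L'_{n-1}} \to \cdots \to F_{L'_0} \rd$. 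The main technical hurdle is the microlocal cut-off step: one must verify that it produces simple sheaves with the precise expected microsupport inside the Tamarkin category (not merely in $\Sh(M \times \bR \times \bR_t)$), and that the successive distinguished triangles remain compatible with the slice restriction. This relies essentially on the strict separation of slopes, which ensures that the cut-off hyperplanes $\sigma = (k-1/2)\tau$ are disjoint from the microsupport of $F$, together with the uniqueness of simple sheaf quantizations of the cylindrical pieces.
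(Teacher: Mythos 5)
Your high-level strategy — use microlocal cut-off to split $F$ into slope-indexed pieces governed by the separated microsupport components, then restrict to the slice — is exactly the paper's strategy. But the specific kernel you propose will not do the job, and this is not a cosmetic issue.

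You suggest realizing the cut-off ``by convolution with the constant sheaf on the half-space $\{t + (k-1/2)s \ge c_k\}$, or equivalently by a GKS kernel.'' Neither works. The polar cone of a half-plane in $\bR^2_{s,t}$ is a single ray in $\bR^2_{\sigma,\tau}$, so convolving with the constant sheaf on a half-plane projects microsupport onto a one-dimensional ray, not onto the wedge $\{\sigma \ge (k - 1/2)\tau \ge 0\}$ that you need to retain the components at heights $\ge k$. A GKS kernel is also wrong for this purpose: it transports microsupport along a contact isotopy, it does not truncate it. The paper instead convolves with the \emph{proper} cone $\gamma_i = \{(s,t) : 0 \le s,\ -(i+\tfrac12)s \le t\}$, whose polar is the full sector $\gamma_i^* = \{\sigma \ge (i+\tfrac12)\tau \ge 0\}$; iterating $G^i \coloneqq \bfk_{\gamma_i} \star G^{i-1}$ peels off one slope at a time. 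Crucially, $\gamma_i \subset \{s \ge 0\}$, which forces the stalks of $\bfk_{\gamma_i} \star G^{i-1}$ over $\{s = s_0\}$ to depend only on $G^{i-1}|_{\{s \le s_0\}}$. That locality is what makes the iterated cone decomposition of $F|_{M \times (-\infty,-1] \times \bR_t}$ well-defined and compatible with the restriction; a two-sided half-plane convolution destroys it. Finally, your invocation of ``uniqueness of simple sheaf quantizations'' to identify each cofiber with a cylindrical pullback is replaced in the paper by a cleaner argument: the microsupport estimate $\mathring{\MS}(F^i_{(-\infty,-1]}) \subset \{\sigma = i\tau\}$ combined with the non-characteristic propagation lemma \cite[Prop.~5.4.5]{KS90} shows directly that $F^i_{(-\infty,-1]} \simeq \pi_i^{-1}\pi_{i*}F^i_{(-\infty,-1]}$, hence a pullback of its slice.
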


Our third result is the following inequality for the interleaving distance by the shadow distance of a Lagrangian cobordism.
For a Lagrangian cobordism $V \subset T^*(M \times \bR)$, the shadow $\cS(V)$ of $V$ is defined as the area of $T^*\bR \setminus \cU(V)$, where $\cU(V)$ is the union of unbounded regions of $T^*\bR \setminus p(V)$ with $p \colon T^*(M \times \bR) \to T^*\bR$ being the projection map. 
It is known that in the exact setting, the Lagrangian shadow induces a metric on all Lagrangian submanifolds \cite{CS19}.
Our result in particular implies an isomorphism of the sheaves in the Tamarkin category modulo torsion objects $\cT_\infty(T^*M) = \cT(T^*M) / \mathrm{Tor}$ where sheaves related by Hamiltonian isotopies become isomorphic objects (see \cref{remark:category_T} for the precise definition). 

\begin{theorem}[{see \cref{theorem:inequality_multiple_ends}}]\label{theorem:inequality_intro}
    Let $F$ be a simple sheaf quantization of Lagrangian cobordism $V$ and consider iterated cone decompositions as in \cref{theorem:iterated_cone_intro}.
    Then there exist increasing sequences of real numbers $\{c_i\}_{i=1,\dots,m-1}, \{c'_j\}_{j=0,\dots,n-1}$ such that 
    \begin{equation*}
        d'_{\cT(T^*M)}(\tilde{F}_{-1},\tilde{F}_{+1}) \le \cS(V),
    \end{equation*}
    where 
    \begin{align*}
        \tilde{F}_{-1} & \coloneqq 
        \ld F_{L_0} \to T_{c_1}F_{L_1} \to \dots \to T_{c_{m-2}}F_{L_{m-2}} \to T_{c_{m-1}}F_{L_{m-1}} \rd, \\ 
        \tilde{F}_{+1} & \coloneqq 
        \ld T_{-c'_{n-1}} F_{L'_{n-1}} \to T_{-c'_{n-2}}F_{L'_{n-2}} \to \dots \to T_{-c'_1}F_{L'_1} \to T_{-c'_{0}}F_{L'_{0}} \rd. 
    \end{align*}   
    In particular, one has $\tilde{F}_{-1} \simeq \tilde{F}_{+1}$ in $\cT_\infty(T^*M)$.
\end{theorem}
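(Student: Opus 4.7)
The plan is to derive the inequality from the Hamiltonian stability of the interleaving distance $d'_{\cD(M)}$ proved in \cite{AI20}, combined with the iterated cone decomposition of \cref{theorem:iterated_cone_intro}. The strategy is to realize an interpolation between (the iterated cone presentations of) $F_{-1}$ and $F_{+1}$ by a Hamiltonian isotopy whose Hofer norm is controlled by $\cS(V)$, and then invoke sheaf-theoretic stability. The constants $c_i, c'_j$ will be extracted from the primitives of the Liouville form along $V$, which record the $t$-heights at which the Lagrangian ends sit after straightening.

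\textbf{Step 1 (straightening of $V$).} Given $\epsilon > 0$, I would construct a compactly supported Hamiltonian $H$ on $T^*(M \times \bR)$ whose time-one flow $\varphi_H$ turns $V$ into a cobordism with shadow zero, i.e., one whose projection to $T^*\bR$ has no bounded complement regions. The natural candidate is obtained by lifting to $T^*(M \times \bR)$ a Hamiltonian on $T^*\bR$ supported on $T^*\bR \setminus \cU(V)$; by a standard area-exhaustion construction analogous to Cornea--Shelukhin \cite{CS19}, this can be arranged with Hofer norm at most $\cS(V)+\epsilon$, while leaving the conical ends intact outside a compact piece.

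\textbf{Step 2 (sheaf stability and collapse).} By the Hamiltonian stability of the interleaving distance from \cite{AI20}, the sheaf quantization $F$ of $V$ and a sheaf quantization $F^H$ of $\varphi_H(V)$ are $(\cS(V)+\epsilon)$-interleaved in $\cD(M \times \bR)$. Restricting to the slices $s = \pm 1$ preserves this bound in $\cD(M)$. On the straightened cobordism, the iterated cone decomposition from \cref{theorem:iterated_cone_intro} collapses: both $F^H_{-1}$ and $F^H_{+1}$ become iterated cones of suitable translates $T_{c_i} F_{L_i}$ and $T_{c'_j} F_{L'_j}$ with the same constituents, and a direct comparison (using uniqueness of simple sheaf quantizations up to $t$-shifts) identifies them in $\cD(M)$. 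Taking $\epsilon \to 0$ and composing the interleavings yields the desired inequality. The final isomorphism $\tilde{F}_{-1} \simeq \tilde{F}_{+1}$ in $\cT(M)$ then follows because the torsion localization kills the shifts $T_c$ and turns finite interleavings into isomorphisms.

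\textbf{Main obstacle.} The principal technical difficulty lies in Step 1: controlling the Hofer norm of the straightening Hamiltonian by $\cS(V)$ while respecting the conical structure of $V$ at infinity and the specific arrangement of its ends at heights $0,1,\dots$. I plan to adapt the Cornea--Shelukhin construction \cite{CS19} by first cutting $V$ off in a sufficiently large but bounded region in the $s$-direction, building the Hamiltonian there, and then extending it trivially; the challenge is to check that the resulting sheaf-theoretic bound does not depend on the cutoff, for which I expect to use the microsupport estimates for nearby cycles along $s$. A secondary subtlety is matching the shifts $c_i, c'_j$ with the action integrals of the primitives along $V$; here the functoriality of sheaf quantization under Hamiltonian isotopy, together with the simple form of quantizations for cylindrical pieces, should pin down the values.
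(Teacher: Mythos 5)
Your proposal has a genuine gap in Step~2, and as a consequence the strategy does not match the paper's actual mechanism.

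The claimed ``collapse'' is false. A Hamiltonian isotopy supported on $T^*\bR \setminus \cU(V)$ and lifted to $T^*(M \times \bR)$ deforms only the projection of $V$ to $T^*\bR$; it does not alter the Lagrangian ends $L_0, \dots, L_{m-1}$ and $L'_0, \dots, L'_{n-1}$, since the isotopy is the identity on the ends (and must be, to preserve the conical/product structure at infinity). Therefore, after straightening, $F^H_{-1}$ is still an iterated cone of translates of the $F_{L_i}$ while $F^H_{+1}$ is an iterated cone of translates of the $F_{L'_j}$ --- these are quantizations of genuinely different Lagrangians, and in general $m \neq n$, so there is no reason for ``the same constituents'' to appear, and no direct isomorphism $F^H_{-1} \simeq F^H_{+1}$ in $\cD(M)$. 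Relatedly, the Step~1 claim that one can achieve shadow \emph{zero} with Hofer norm $\le \cS(V) + \epsilon$ is also wrong: when $m \neq n$ the projection of any deformation of $V$ always encloses some bounded area, and in any case Hofer energy of the straightening is not controlled by the shadow.

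What the paper actually does avoids both issues. In \cref{lem:deform-shadow}, a Hamiltonian on $T^*J$ (lifted via \cref{rem:noncompact-gks}) is constructed that keeps the shadow region but \emph{squeezes} it into a thin band $\{0 \le \sigma \le \sigma_+(s)\}$ over $[-1,1]$ with area $\le \cS(V) + \varepsilon$. One then takes the GKS-deformed sheaf $K^\varepsilon_1 \circ F$ and applies \cref{prp:htpy_wabisom} \emph{to the $s$-family of slices of that single sheaf}, not to a comparison of $F$ with $F^H$: the microsupport constraint $0 \le \sigma \le \sigma_+(s)\tau$ gives, by integrating $\sigma_+$ over $s \in [-1,1]$, a weak $(a,b)$-isomorphism between $i_{-1}^{-1}(K^\varepsilon_1 \circ F)$ and $i_{+1}^{-1}(K^\varepsilon_1 \circ F)$ with $a + b \le \Area(A_\varepsilon)$. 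Since the isotopy fixes the part of the reduced microsupport over $|s| > 1$, these two slices are identified with $\tilde F_{-1}$ and $\tilde F_{+1}$ up to translations, whose values $c_i, c'_j$ come from Stokes' formula applied to \eqref{eqn:hamilton-lift}. This two-sided comparison between the two ends of the \emph{deformed} cobordism is the step your proposal lacks; Hamiltonian stability of the interleaving distance in the $\cD(M\times\bR)$ category plays no role here.
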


We do not need to require $V$ to be a smooth embedded Lagrangian cobordism. In fact, we prove \cref{theorem:iterated_cone_intro,theorem:inequality_intro} only assuming that the projection of the reduced microsupport of $F$ onto $T^*\bR$ satisfies some condition (see \cref{assumption:proj} more precisely).

We apply the theorems above to Lagrangian intersection, recovering \cite[Thm.~C]{BCS18} in the cotangent bundle case, which shows that there is an energy cost to separate or connect Lagrangians through cobordisms. 

\begin{theorem}[{see \cref{theorem:intersection} for a more precise statement}]\label{theorem:intersection_intro}
    Let $V \subset T^*(M \times \bR)$ be an exact Lagrangian cobordism with conical ends between $(L_0, \dots, L_{m-1})$ and $L'$. 
    Moreover, let $N$ be a Lagrangian and assume that the Lagrangians are in general position in the sense that each pair of $N,L_0,\dots,L_{m-1}$, and $L'$ intersect transversally and there are no point in the intersection of any triple $(N, L_i, L_j)$ for $i \neq j$. 
    Then there exists a constant $\delta>0$ that depends only on $L_0, \dots, L_{m-1}$, and $N$ satisfying the following: 
    if $\cS(V) < \delta$ then 
    \begin{equation*}
        \#(N \cap L') \ge \sum_{i=0}^{m-1} \#(N \cap L_i).
    \end{equation*}
\end{theorem}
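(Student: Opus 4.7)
The plan is to transfer the iterated cone decomposition of the cobordism to the persistence-module side via a sheaf quantization of $N$, and then use the interleaving bound of \cref{theorem:inequality_intro} to compare the resulting barcodes. Since $V$ has only one positive end, \cref{theorem:iterated_cone_intro} gives $F_{+1}$ as a shift of $F_{L'}$ alone, and \cref{theorem:inequality_intro} reads
\[
d'_{\cD(M)}\bigl(\ld F_{L_0} \to T_{c_1}F_{L_1} \to \dots \to T_{c_{m-1}}F_{L_{m-1}} \rd,\ T_{c'_0}F_{L'}\bigr) \le \cS(V).
\]
I would first pick a simple sheaf quantization $F_N$ of $N$ and, for each exact Lagrangian $L$ in sight, form the persistence module $A_L$ obtained from $\mathrm{Hom}_{\cD(M)}(F_N, T_c F_L)$ as the shift $c$ varies.

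Second, I would identify the transverse intersection count with a barcode invariant. Under the hypothesis (transversality and no triple points), a sheaf-theoretic Morse argument, in the spirit of the first two authors' earlier work \cite{Ike19,AISQ}, gives
\[
\#(N\cap L) \;=\; 2\,\#\mathcal{B}^{\mathrm{fin}}(A_L)+\#\mathcal{B}^\infty(A_L) \;=:\; N(A_L),
\]
i.e., the total bar-endpoint count, with finite bars counted twice. Since $N, L_0, \dots, L_{m-1}$ are fixed, the minimum bar length appearing in any $\mathcal{B}(A_{L_i})$ is a strictly positive constant; let $\delta$ be (say) a quarter of this minimum, so $\delta$ depends only on $L_0, \dots, L_{m-1}$ and $N$.

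Third, I would apply $\mathrm{Hom}_{\cD(M)}(F_N, -)$, together with its $T$-grading, to the iterated cone decomposition of $\tilde{F}_{-1}$. This produces an iterated cone of persistence modules whose graded pieces are $T_{c_i} A_{L_i}$. Since bars are merely redistributed in an exact triangle of persistence modules, the endpoint count $N(\cdot)$ is additive under cones, and hence
\[
N(A_{\tilde{F}_{-1}}) = \sum_{i=0}^{m-1} N(A_{L_i}) = \sum_{i=0}^{m-1} \#(N\cap L_i),
\]
where $A_{\tilde{F}_{-1}}$ is the persistence module associated to $\tilde{F}_{-1}$ in the same way as above. On the other hand, the interleaving inequality gives a bottleneck bound of size $\cS(V)$ between the barcodes of $A_{\tilde{F}_{-1}}$ and of $T_{c'_0} A_{L'}$. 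If $\cS(V) < \delta$, every bar inherited from some $A_{L_i}$ has length well above $2\cS(V)$, hence is matched by a bar in $A_{L'}$, giving $N(A_{L'}) \ge N(A_{\tilde{F}_{-1}})$. Combining with $\#(N\cap L')\ge N(A_{L'})$ completes the proof.

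The main obstacle is this last barcode comparison: a priori, the cone connecting maps in the iterated decomposition can pair up bars of the $A_{L_i}$ and produce short bars, which a $\cS(V)$-interleaving could then annihilate. The key is that the shifts $c_i$ in \cref{theorem:inequality_intro} are chosen adapted to the shadow, so that the cone maps have persistence norm of order $\cS(V)$; combined with the uniform lower bound on bar lengths in each $A_{L_i}$, this forces all bars in the iterated cone to stay safely above $2\cS(V)$ and thus to survive the interleaving, yielding the required bars in $A_{L'}$.
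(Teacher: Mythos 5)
Your high-level strategy (apply $\cHom^\star(F_N,-)$, turn the iterated cone decomposition into a cone of persistence modules, then compare barcodes across an $\cS(V)$-interleaving with $A_{L'}$) matches the paper's. But the core of the argument is exactly the ``main obstacle'' you flag in your last paragraph, and your proposed resolution there does not work and is not the paper's.

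Two specific gaps. First, your claim that bar-endpoint count $N(\cdot)$ is ``additive under cones'' because ``bars are merely redistributed'' is false in general: a cone map can be an isomorphism on a sub-bar and annihilate endpoints (e.g.\ $\Cone(\bfk_{[0,1)} \xrightarrow{\sim} \bfk_{[0,1)}) \simeq 0$). The paper earns this additivity from the transversality hypotheses: since $N \cap L_s \cap L_t = \varnothing$, the doubled conical Lagrangian $\Lambda_{L_i} \cup T_{-b_{i,i+1}}\Lambda_{L_{i+1}} \cup \cdots$ intersects $\Lambda_N$ cleanly, and \cref{lem:ike-intersection} applied to the cone of simple sheaves then gives the exact endpoint count with no cancellation. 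Second, you assert without proof that because the shifts $c_i$ are ``chosen adapted to the shadow,'' the cone maps have persistence norm of order $\cS(V)$, so that any bar of the twisted complex stays ``well above $2\cS(V)$.'' This is precisely the claim the paper flags as the difficulty (the naive bound in~\eqref{eq:delta_0} \emph{does} depend on the $c_i$'s and primitives, hence on $V$) and it does not follow from a bound on the minimum bar length of the individual $A_{L_i}$: a connecting map in a cone can create a bar strictly shorter than every bar of the two pieces. The paper's actual resolution is \cref{lemma:energy_general}: by iterated lifting of the twisted-complex data $\varphi_{i,j}$ (including the higher homotopies for $m\ge 3$, via \cref{lemma:mapping_cone} and \cref{claim:general}), the energy of each $\varphi_{i,j}$ is bounded below by shift-invariant action differences $\cA_j(p_j)-\cA_i(p_i)-\cA_{i,i_1}(p_{i,i_1})-\cdots-\cA_{i_k,j}(p_{i_k,j})$ involving pairwise intersection points of the $L$'s and $N$. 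That uniform, $V$-independent lower bound~\eqref{eq:energy_lower_bound_uniform} is what makes the final $\delta$ depend only on $N,L_0,\dots,L_{m-1}$, and it is the step your proposal is missing.
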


\begin{remark}
    In fact, \cref{theorem:intersection} is a more general statement assuming only clean intersections of Lagrangian submanifolds, which means that $T_p(L_0 \cap L_1) = T_pL_0 \cap T_pL_1$ for any $p \in L_0 \cap L_1$. 
    See Sect.~\ref{subsec:lagrangian-intersection} for the argument. Such situations arise naturally in examples.
    When $L_0$ and $L_1$ intersect cleanly along $D$, one can define the surgery $L_0 \#_D L_1$ along $D$ and build a Lagrangian cobordism from $L_0, L_1$ to $L_0 \#_D L_1$ \cite[Lem.~6.1]{MakWu}. 
\end{remark}

Compared to previous Lagrangian intersection results using sheaf theoretic methods \cite{Ike19,AISQ}, we need to estimate the energy of higher homotopies in the iterated cone decomposition, which, in Lagrangian Floer theory, come from pseudo-holomorphic polygons instead of just pseudo-holomorphic strips. See the main body of the paper.

\subsection{Related work}
The relation between Lagrangian cobordisms and Fukaya categories and their filtrations has been studied in many previous works. Biran--Cornea \cite{BC13,BC14} showed that (monotone) Lagrangian cobordisms induce iterated cone decompositions in the Fukaya categories. Cornea--Shelukhin \cite{CS19} introduced the shadow distance of Lagrangian cobordisms, and later Biran--Cornea--Shelukhin \cite{BCS18} used the shadow distance to prove results on Lagrangian intersection using the theory of weakly filtered $A_\infty$-categories. More recently, motivated by the geometry, Biran--Cornea--Zhang \cite{BCZ23} developed the theory of triangulated persistence categories and showed that the Tamarkin category and the Fukaya category are triangulated persistence categories.

It is well known that the Hofer distance of $C^1$-close Lagrangians is related to the shadow of Lagrangian movie under Hamiltonian isotopies \cite{Milinkovic}. Since its introduction, the Lagrangian shadow has become a standard tool in the study of quantitative aspects of Lagrangian submanifolds. See \cite{Hicks23,HicksMak22} for interesting geometric constructions that involve the Lagrangian shadows.

The algebraic results above can also be enhanced to a categorical level. Biran--Cornea \cite{BC14} defined a category that with a functor into the cone resolution category of the Fukaya category, while Nadler--Tanaka \cite{NadTanaka} defined a stable $\infty$-category of Lagrangian cobordisms with an exact functor into the wrapped Fukaya category \cite{Tanaka16,Tanaka16Exact}. We expect that the sheaf quantization result here provides a way to define a functor from that category to the sheaf categories.

Finally, it is expected that the relation between Lagrangian cobordisms and Fukaya categories passes through the Waldhausen $s$-construction in algebraic $K$-theory. In particular, Biran--Cornea \cite{BC14} conjectured a relation between the Lagrangian cobordism group and the Grothendieck group of the Fukaya category \cite{BC17,Bosshard23}. 
See \cite{BCZ23-2} for the relation between the Grothendieck group and the persistence of the Fukaya category. It would be interesting to study the question for the sheaf categories.

\section{Preliminaries}

In this section, we recall some basics of the microlocal sheaf theory due to Kashiwara and Schapira~\cite{KS90}.
Throughout this paper, all manifolds are assumed to be of class $C^\infty$ without boundary.
Let $\bfk$ be a field until the end of this paper.

Until the end of this section, let $X$ be a $C^\infty$-manifold.
For a locally closed subset $A$ of $X$, we denote by $\overline{A}$ its closure and by $\Int(A)$ its interior.
We also denote by $\Delta_X$ or simply $\Delta$ the diagonal of $X \times X$.
We denote by $\pi_X \colon T^*X \to X$ the cotangent bundle of $X$.
We write $0_X$ for the zero-section of $T^*X$ and set $\rT X \coloneqq T^*X \setminus 0_X$.
We denote by $(x;\xi)$ a local homogeneous coordinate system of $T^*X$.
The cotangent bundle $T^*X$ is an exact symplectic manifold with the Liouville 1-form $\lambda_{T^*X}=\langle \xi, dx \rangle$.

\subsection{Microsupports of sheaves}\label{subsec:microsupport}

For a manifold $X$, we write $\bfk_X$ for the constant sheaf with stalk $\bfk$ and let $\Sh(X)=\Sh(\bfk_X)$ denote the dg-derived category of sheaves of $\bfk$-vector spaces on $X$. 
One can define Grothendieck's six operations between dg-derived categories of sheaves $\cHom,\allowbreak \otimes, \allowbreak f_*,\allowbreak f^{-1},\allowbreak f_!,\allowbreak f^!$ for a morphism of manifolds $f \colon X \to Y$ \cite{Spaltenstein,Schn18}. 
Note that all the functors in this paper are dg-derived functors.
For a locally closed subset $Z$ of $X$, we denote by $\bfk_Z \in \Sh(X)$ the constant sheaf with stalk $\bfk$ on $Z$, extended by $0$ on $X \setminus Z$.
Moreover, for a locally closed subset $Z$ of $X$ and $F \in \Sh(X)$, we define 
\begin{equation*}
    F_Z \coloneqq F \otimes \bfk_Z, \quad \Gamma_Z(F) \coloneqq \cHom(\bfk_Z,F).
\end{equation*}
Let $X_1, X_2, X_3$ be manifolds and $\pi_{ij} \colon X_1 \times X_2 \times X_3 \to X_i \times X_j$ for $i \neq j \in \{1,2,3\}$. 
For $K_{12} \in \Sh(X_1 \times X_2)$ and $K_{23} \in \Sh(X_2 \times X_3)$, their composition is defined by 
\begin{equation*}
    K_{12} \circ K_{23} = \pi_{13 !}(\pi_{12}^{-1} K_{12} \otimes \pi_{23}^{-1} K_{23}).
\end{equation*}

Following \cite{KS90,robalo2018lemma}, for $F \in \Sh(X)$, we let $\MS(F)$ denote the microsupport of $F$, which is a conical (i.e., invariant under the action of $\bR_{>0}$) closed subset of $T^*X$.
The set $\MS(F)$ describes the singular codirections of $F$, in which $F$ does not propagate.
We also use the notation $\mathring{\MS}(F) \coloneqq \MS(F) \cap \rT X=\MS(F) \setminus 0_X$.

By using microsupports, we can microlocalize the category $\Sh(X)$.
Let $A \subset T^*X$ be a subset and set $\Omega=T^*X \setminus A$.
We denote by $\Sh_{A}(X)$ the full dg-subcategory of $\Sh(X)$ consisting of sheaves whose microsupports are contained in $A$.
By the triangle inequality, the subcategory $\Sh_{A}(X)$ is a pretriangulated subcategory.
We set
\begin{equation*}
    \Sh(X;\Omega) \coloneqq \Sh(X)/\Sh_{A}(X),
\end{equation*}
the dg-quotient of $\Sh(X)$ by $\Sh_A(X)$.
A morphism $u \colon F \to G$ in $\Sh(X)$ becomes a quasi-isomorphism in $\Sh(X;\Omega)$ if $u$ is embedded in an exact triangle $F \overset{u}{\to} G \to H \overset{+1}{\to}$ with $\MS(H) \cap \Omega=\varnothing$.
We also denote by $\Sh_{(A)}(X)$ the full dg-subcategory of $\Sh(X)$ formed by $F$ for which there exists a neighborhood $U$ of $A$ satisfying $\MS(F) \cap U \subset A$.

\subsection{Tamarkin category and interleaving distance}\label{subsec:Tamarkin}

Here, we recall the Tamarkin category $\cT(T^*X)$ and the interleaving distance. 

We write $(x;\xi)$ for a local homogeneous coordinate system on $T^*X$ and $(t;\tau)$ for the homogeneous coordinate system on $T^*\bR_t$.
Define the maps
\begin{equation*}
\begin{aligned}
    & \qquad \pi_1, \pi_2, m \colon X \times \bR \times \bR \to X \times \bR_t, \\
    \pi_1(x,t_1,t_2)&=(x,t_1), \ \pi_2(x,t_1,t_2)=(x,t_2), \ m(x,t_1,t_2)=(x,t_1+t_2).
\end{aligned}
\end{equation*}
For $F,G \in \Sh(X \times \bR_t)$, one sets
\begin{align}
    F \star G & \coloneqq m_!(\pi_1^{-1}F \otimes \pi_2^{-1}G), \label{eq:star} \\
    \cHom^\star(F,G) & \coloneqq \pi_{1*} \cHom(\pi_2^{-1}F,m^!G). \nonumber
\end{align}
Note that the functor $\star$ is a left adjoint to $\cHom^\star$.

Tamarkin proved that the localized category $\Sh(X \times \bR_t;\{\tau >0\})$ is equivalent to both the left orthogonal ${}^\perp \Sh_{\{\tau \le 0\}}(X \times \bR_t)$ and the right orthogonal $\Sh_{\{\tau \le 0\}}(X \times \bR_t)^\perp$:
\begin{equation*}
\begin{aligned}
    P_l \coloneqq \bfk_{X \times [0,+\infty)} \star (\ast) \colon \Sh(X \times \bR_t;\{\tau >0\}) \simto {}^\perp \Sh_{\{\tau \le 0\}}(X \times \bR_t), \\
    P_r \coloneqq \cHom^\star(\bfk_{X \times [0,+\infty)}, \ast) \colon \Sh(X \times \bR_t;\{\tau >0\}) \simto \Sh_{\{\tau \le 0\}}(X \times \bR_t)^\perp.
\end{aligned}
\end{equation*}
We set $\Omega_+ \coloneqq \{\tau >0\} \subset T^*(X \times \bR_t)$ and define a map $\rho \colon \Omega_+ \to T^*X$ as $(x,t;\xi,\tau) \mapsto (x;\xi/\tau)$.

\begin{definition}
	One defines 
	\begin{equation*}
	\cT(T^*X) \coloneqq \Sh(X \times \bR_t;\Omega_+) \simeq {}^\perp \Sh_{\{\tau \le 0\}}(X \times \bR_t) \simeq \Sh_{\{\tau \le 0\}}(X \times \bR_t)^\perp.
	\end{equation*}
    For $F \in \cT(T^*X)$, one defines its reduced microsupport by 
    \begin{equation*}
        \RS(F) \coloneqq \overline{\rho(\MS(F) \cap \Omega_+)} \subset T^*X.
    \end{equation*}
    For a closed subset $A$ of $T^*X$, one also defines a full subcategory $\cT_A(T^*X)$ of $\cT(T^*X)$ by 
    \begin{equation*}
        \cT_A(T^*X) \coloneqq \{ F \in \cT(T^*X) \mid \RS(F) \subset A \}.
    \end{equation*}
\end{definition}

For $F \in \cT(T^*X)$, we take the canonical representative $P_l(F) \in {}^\perp \Sh_{\{\tau \le 0\}}(X \times \bR_t)$ unless otherwise specified.
Note that if $F \in {}^\perp \Sh_{\{\tau \le 0\}}(X \times \bR_t)$, then $\cHom^\star(F,G) \in \Sh_{\{\tau \le 0\}}(X \times \bR_t)^\perp$.
Thus $\cHom^\star$ induces an internal Hom functor $\cHom^\star \colon \cT(T^*X)^{\mathrm{op}} \times \cT(T^*X) \to \cT(T^*X)$.
We also often regard $\cHom^\star(F,G)$ as in ${}^\perp \Sh_{\{\tau \le 0\}}(X \times \bR_t)$ by applying the projector $P_l$.

\begin{proposition}[{cf.~\cite[Lem.~4.18]{GS14}}]\label{prp:morD}
    For $F,G \in \cT(T^*X)$, one has
    \begin{equation*}
       \Hom_{\cT(T^*X)}(F,G) \simeq
       \Gamma_{X \times [0,+\infty)}(X \times \bR_t;\cHom^\star(F,G)). 
    \end{equation*}
\end{proposition}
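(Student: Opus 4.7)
The plan is to chain together three facts that are all in place by the end of \cref{subsec:Tamarkin}: Tamarkin's realisation of $\cD(X)$ as the left orthogonal ${}^\perp \Sh_{\{\tau \le 0\}}(X \times \bR_t)$ via the projector $P_l$, the tautological formula $P_l F = \bfk_{X \times [0,+\infty)} \star F$, and the adjunction $\star \dashv \cHom^\star$.

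First I would establish
\begin{equation*}
    \Hom_{\cD(X)}(F,G) \simeq \Hom_{\Sh(X \times \bR_t)}(P_l F, G).
\end{equation*}
This is a standard Bousfield computation: the quotient functor restricted to ${}^\perp \Sh_{\{\tau \le 0\}}(X \times \bR_t)$ is fully faithful with quasi-inverse $P_l$, giving $\Hom_{\cD(X)}(F,G) \simeq \Hom_\Sh(P_l F, P_l G)$; and the natural exact triangle $P_l G \to G \to G' \overset{+1}{\to}$ with $G' \in \Sh_{\{\tau \le 0\}}(X \times \bR_t)$, together with the vanishing $\Hom(P_l F, G') = 0$ forced by orthogonality, identifies $\Hom_\Sh(P_l F, P_l G) \simeq \Hom_\Sh(P_l F, G)$.

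Second, I would substitute $P_l F = \bfk_{X \times [0,+\infty)} \star F$ and invoke $\star \dashv \cHom^\star$ to rewrite the right-hand side as $\Hom_\Sh(\bfk_{X \times [0,+\infty)}, \cHom^\star(F,G))$. Finally, using the definition $\Gamma_Z(-) = \cHom(\bfk_Z, -)$ from \cref{subsec:microsupport} and passing to global sections produces $\Gamma_{X \times [0,+\infty)}(X \times \bR_t; \cHom^\star(F,G))$, as desired.

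Every step is either an adjunction manipulation or a direct unwinding of a definition already recalled in the preliminaries, so I do not anticipate any real obstacle. The only mildly delicate point is the first reduction, where one must keep track of which projector is applied on which side; once $P_l F$ is fixed as the canonical representative of $F$, the remaining identifications are formal.
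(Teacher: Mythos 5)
Your proof is correct and is the standard argument: it chains the Bousfield-localization identification $\Hom_{\cD(X)}(F,G) \simeq \Hom_{\Sh(X\times\bR_t)}(P_l F, G)$ (using that $P_l F$ is left orthogonal to $\Sh_{\{\tau\le 0\}}$), the formula $P_l F = \bfk_{X\times[0,+\infty)}\star F$, the adjunction $\star\dashv\cHom^\star$, and the identity $\Gamma_Z(X;-) = \Hom(\bfk_Z,-)$. The paper does not reproduce a proof and instead cites \cite[Lem.~4.18]{GS14}, whose proof proceeds along exactly these lines, so there is no genuine divergence to report.
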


Now we introduce a pseudo-distance $d'_{\cT(T^*X)}$ on the Tamarkin category $\cT(T^*X)$.
Note that our distance $d'_{\cT(T^*X)}$ here is different from the distances used in \cite{AI20, AISQ,AI24completeness}. 
Here we use weakly $(a, b)$-isomorphic to obtain an isomorphism in $\cT_\infty(T^*X)$ (see \cref{remark:category_T} below), while in \cite{AI20, AISQ} the authors only use $(a, b)$-interleaving in the definition. 
Nevertheless, all the results in \cite{AI20,AISQ} hold for this stronger notion \cite[Remark 4.5]{AI20}. 
For $c \in \bR$, consider the translation map $T_c \colon X \times \bR \to X \times \bR, \, T_c(x, t) = (x, t+c)$. 
By abuse of notation, we write $T_c \coloneqq {T_c}_* \colon \Sh(X \times \bR_t) \to \Sh(X \times \bR_t)$ and the induced functor ${T_c}_* \colon \cT(T^*X) \to \cT(T^*X)$. 
For $c \le d$, there exists a natural transformation $\tau_{c,d} \colon T_c \to T_d$ between the functors on $\cT(T^*X)$.

\begin{definition}\label{def:defab}
	Let $F,G \in \cT(T^*X)$ and $a,b \in \bR_{\ge 0}$.
	\begin{enumerate}
		\item The pair $(F,G)$ is said to be said to be \emph{weakly $(a,b)$-isomorphic} 
		if there exist morphisms $\alpha, \delta \colon F \to {T_a}G$ 
		and $\beta, \gamma \colon G \to {T_b}F$ such that
		\begin{enumerate}
			\renewcommand{\labelenumii}{$\mathrm{(\arabic{enumii})}$}
			\item $F \xrightarrow{\alpha} {T_a} G \xrightarrow{{T_a}\beta} {T_{a+b}}F$ is homotopic to $\tau_{0,a+b}(F) \colon F \to {T_{a+b}}F$,
			\item $G \xrightarrow{\gamma} {T_b} F \xrightarrow{{T_b}\delta} {T_{a+b}}G$ is homotopic to $\tau_{0,a+b}(G) \colon G \to {T_{a+b}}G$, and
			\item $\tau_{a,2a}(G) \circ \alpha \simeq \tau_{a,2a}(G) \circ \delta$ and $\tau_{b,2b}(F) \circ \beta \simeq \tau_{b,2b}(F) \circ \gamma$.
		\end{enumerate}
		\item  One defines 
    	\begin{equation*}
    	d'_{\cT(T^*X)}(F,G)
    	\coloneqq
    	\inf 
    	\lc 
    	a+b \in \bR_{\ge 0} 
    	\relmid
        \begin{aligned}
            & a,b \in \bR_{\ge 0}, \\
    	    & \text{$(F,G)$ is weakly $(a,b)$-isomorphic}
        \end{aligned}
    	\rc.
    	\end{equation*}
	\end{enumerate}	 
\end{definition}

\begin{remark}\label{remark:category_T}
    We can define a new category $\cT_\infty(T^*X)$ as the quotient category \linebreak $\cT(T^*X)/\Tor$, where $\Tor$ is the null system consisting of objects whose distances with the zero object are finite.
    Then it is shown in \cite{GS14} that
    \begin{equation*}
        \Hom_{\cT_\infty(T^*X)}(F,G) \coloneqq \hocolim_{c \to \infty} \Hom_{\cT(T^*X)}(F,T_c G).
    \end{equation*}
    Hence, if $(F,G)$ are weakly $(a,b)$-isomorphic for some $a,b$, then the images of $F$ and $G$ are isomorphic in $\cT_\infty(T^*X)$. 
\end{remark}

We will use the following proposition in the proof of the main theorem.

\begin{proposition}[{\cite[Prop.~4.15]{AI20} and \cite[Prop.~3.9]{AI24completeness}}]\label{prp:htpy_wabisom}
	Let $\cH \in \Sh_{\{\tau \ge 0\}}(X \times \bR \times I)$.
	Assume that there exist continuous functions $f, g \colon I \to \bR_{\ge 0}$ satisfying
	\begin{equation*}
	\MS(\cH) \subset T^*X \times \{(t,s;\tau,\sigma) \mid -f(s) \cdot \tau \le \sigma \le g(s) \cdot \tau \}.
	\end{equation*}
	Then $\left(\cH|_{X \times \bR \times \{0\}},\cH|_{X \times \bR \times \{1\}} \right)$ is weakly $\left( \int_{0}^{1} g(s) ds+\varepsilon, \int_{0}^{1} f(s) ds +\varepsilon \right)$-isomorphic for any $\varepsilon \in \bR_{>0}$.
	In particular, one has $d'_{\cT(T^*X)}\left(\cH|_{X \times \bR \times \{0\}},\cH|_{X \times \bR \times \{1\}} \right) \le \int_{0}^{1} (f+g)(s) ds$.
\end{proposition}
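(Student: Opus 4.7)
The plan is to build the required morphisms via Kashiwara--Schapira's non-characteristic deformation lemma, applied after two change-of-coordinate diffeomorphisms that absorb the allowed $t$-propagation of $\cH$ into the $T_c$ functors. The microsupport bound $-f(s)\tau \le \sigma \le g(s)\tau$ is exactly the assertion that moving $\cH$ by $ds$ in the $s$-direction shifts it in $t$ by an amount between $-f(s)\,ds$ and $g(s)\,ds$; the strategy is to encode this explicitly and then appeal to standard propagation.

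First I would mollify $f,g$ from above to strictly larger $C^\infty$ functions $\hat f > f$ and $\hat g > g$ with integrals exceeding the originals by at most $\varepsilon/2$; the microsupport bound then continues to hold strictly for the pair $(\hat f, \hat g)$. Introduce the diffeomorphisms
\begin{equation}
    \Psi_+(x,t,s) = \Bigl(x, t + \textstyle\int_0^s \hat g(u)\,du, s\Bigr), \qquad \Psi_-(x,t,s) = \Bigl(x, t - \textstyle\int_0^s \hat f(u)\,du, s\Bigr),
\end{equation}
and set $\cH^\pm \coloneqq (\Psi_\pm)_* \cH$. A direct computation of the induced action on cotangent covectors shows
\begin{equation}
    \MS(\cH^+) \subset T^*X \times \{ \sigma \le -(\hat g(s)-g(s))\tau,\ \tau \ge 0\},
\end{equation}
and symmetrically $\MS(\cH^-) \subset T^*X \times \{ \sigma \ge (\hat f(s)-f(s))\tau,\ \tau \ge 0\}$. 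Since $\hat g > g$ and $\hat f > f$, the covector $ds$ (resp.\ $-ds$) lies strictly outside $\mathring{\MS}(\cH^+)$ (resp.\ $\mathring{\MS}(\cH^-)$), so the function $s$ is non-characteristic on each of the deformed sheaves.

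Applying the Kashiwara--Schapira non-characteristic deformation lemma to the function $s$ on the support of $\cH^+$ produces canonical comparison morphisms between the restrictions $\cH^+|_{s=0}$ and $\cH^+|_{s=1}$. Pulling back through $\Psi_+$, and using $\Psi_+|_{s=0} = \mathrm{id}$ together with $\Psi_+|_{s=1} = T_a$ for $a \coloneqq \int_0^1 \hat g$, this yields $\alpha \colon \cH|_{s=0} \to T_a \cH|_{s=1}$; the morphism $\delta$ arises identically, while $\beta, \gamma \colon \cH|_{s=1} \to T_b \cH|_{s=0}$ with $b \coloneqq \int_0^1 \hat f$ come from the symmetric construction applied to $\cH^-$. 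A final post-composition with $\tau_{a,a+\varepsilon/2}$ (resp.\ $\tau_{b,b+\varepsilon/2}$) absorbs the smoothing overhead, keeping the total slack within $\int_0^1(f+g) + \varepsilon$.

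It remains to check conditions (1)--(3) of \cref{def:defab}, and this is the main obstacle. The key point is a uniqueness-of-lift statement: the round trip $T_a \beta \circ \alpha$ and the natural map $\tau_{0,a+b}$ both descend from non-characteristic propagations within $\cH$ that realize the same net $t$-translation by $a+b$, and the strict microsupport gap obtained in the smoothing step ensures the space of morphisms in $\Hom(\cH|_{s=0}, T_{a+b}\cH|_{s=0})$ inducing this propagation is contractible, forcing the two to agree up to homotopy. Condition (2) is symmetric, and condition (3) follows by the same contractibility argument applied to lifts $\cH|_{s=0} \to T_{2a}\cH|_{s=1}$. The whole scheme hinges on the openness of the non-characteristic condition, which is why the strict inequalities $\hat f > f$ and $\hat g > g$ (rather than equality) were built in from the outset.
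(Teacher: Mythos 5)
Your microsupport computation for $\cH^\pm$ is correct and the smoothing step is sound, but the last paragraph — verifying conditions $\mathrm{(1)}$ and $\mathrm{(2)}$ of \cref{def:defab} — is not an argument, and this is where the real content of the proposition lies. Your $\alpha$ lives in the world of $\cH^+ = \Psi_{+*}\cH$ while $\beta$ lives in the world of $\cH^- = \Psi_{-*}\cH$, and these differ by the $s$-dependent shift $t \mapsto t - \hat F(s) - \hat G(s)$, which is not a global $T_c$. The composition $T_a\beta\circ\alpha$ therefore does not arise as a propagation within a single sheaf on $X\times\bR_t\times I$, so it cannot be compared to $\tau_{0,a+b}(\cH|_{s=0})$ by any uniqueness-of-propagation principle. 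The appeal to contractibility of "the space of morphisms inducing this propagation" is circular: to know two morphisms induce the same propagation is essentially to know they are homotopic, and in general $\Hom(\cH|_{s=0}, T_{a+b}\cH|_{s=0})$ is far from contractible. The approach that actually works — and what the cited proofs do — is to produce all the morphisms and the homotopies simultaneously from a single sheaf-theoretic construction (star-convolution of $\cH$ with an explicit constant sheaf $\bfk_Z$ depending on the functions $\hat f, \hat g$), so that the round-trips and the $\tau$-maps are literally restrictions of the same object and the homotopies are inherited from inclusions of the convolution kernels.

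A secondary but genuine problem is the invocation of the non-characteristic deformation lemma to produce $\alpha$. That lemma yields isomorphisms $R\Gamma(\bigcup_s U_s;F)\to R\Gamma(U_s;F)$ for an increasing family of opens (and only under compactness hypotheses that need to be checked here, since $\bR_t$ is unbounded); it does not hand you a morphism $\cH^+|_{s=0}\to\cH^+|_{s=1}$ between the slices. Building the continuation morphism from the one-sided bound $\mathring{\MS}(\cH^+)\subset\{\sigma<0\}$ requires a different mechanism — the adjunctions $\cH^+|_{s_0}\leftarrow Rq_*\cH^+_{[s_0,s_1]}\to\cH^+|_{s_1}$ together with the microlocal Morse lemma to identify one leg as an isomorphism, or, more cleanly, the same convolution construction mentioned above. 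As written, the propagation morphisms are asserted rather than constructed, which also leaves the compatibility conditions without any handle.
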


We note that in the above proposition, the condition that both $f$ and $g$ are non-negative is essential and cannot be removed.

\subsection{Sheaf quantization of Hamiltonian isotopy}

Here, we recall sheaf quantization of Hamiltonian isotopies due to Guillermou--Kashiwara--Schapira~\cite{GKS}.

We first recall the homogeneous case.
Let $I$ be an open interval containing the closed interval $[0,1]$ and $Y$ be a manifold.
For a homogeneous Hamiltonian isotopy $\wh{\phi} \colon \rT Y \times I \to \rT Y$, we can define a conical Lagrangian submanifold $\Lambda_{\wh{\phi}} \subset \rT Y \times \rT Y \times T^*I$ whose projection onto $\rT Y \times \rT Y \times I$ is the graph of~$\widehat{\phi}$.

\begin{theorem}[{\cite[Thm.~3.7]{GKS}}]\label{thm:GKS}
    Let $\wh{\phi} \colon \rT Y \times I \to \rT Y$ be a homogeneous Hamiltonian isotopy.
    There exists a unique object $K \in \Sh(Y \times Y \times I)$ satisfying the following conditions:
	\begin{enumerate}
		\renewcommand{\labelenumi}{$\mathrm{(\arabic{enumi})}$}
		\item $\mathring{\MS}(K) \subset \Lambda_{\wh{\phi}}$,
		\item $K|_{Y \times Y \times \{0\}} \simeq \bfk_{\Delta_{Y}}$.
	\end{enumerate}
	Moreover, both projections $\Supp(K) \to Y \times I$ are proper. 
\end{theorem}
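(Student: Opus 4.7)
The plan is to mimic the original strategy of Guillermou--Kashiwara--Schapira by first establishing uniqueness from microsupport considerations, then constructing $K$ locally in the $I$-variable, and finally patching via uniqueness. Throughout, the guiding picture is that the Lagrangian $\Lambda_{\wh{\phi}}$ is a graph over $\rT Y \times I$, so nothing in $\Lambda_{\wh\phi}$ is "purely in the $s$-direction", which should force a Cauchy-type determinacy.

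\textbf{Step 1 (Uniqueness).} Suppose $K_1, K_2$ both satisfy conditions $(1)$ and $(2)$. I would consider $F \coloneqq \cHom(K_1,K_2) \in \Sh(Y \times Y \times I)$. Using the standard microsupport estimates for internal Hom together with $\mathring{\MS}(K_i) \subset \Lambda_{\wh\phi}$, one checks that the projection of $\mathring{\MS}(F)$ to $T^*I$ avoids any ray with non-zero $\sigma$-component outside $0_I$, because two points of $\Lambda_{\wh\phi}$ lying over the same $(y_1,y_2,s)$ must coincide (graph property of $\wh\phi_s$) and their $T^*I$-components cancel. Applying the non-characteristic deformation lemma \cite[Prop.~2.7.2]{KS90} to $F$ along the projection to $I$, one obtains $F \simeq q^{-1}q_*F$ where $q \colon Y \times Y \times I \to Y \times Y$; in particular $\Hom(K_1,K_2)$ is computed at $s=0$. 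The identity $\bfk_{\Delta_Y} \to \bfk_{\Delta_Y}$ then lifts to a canonical morphism $K_1 \to K_2$, and a symmetric argument, or the propagation of isomorphism along non-characteristic directions, upgrades it to an isomorphism.

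\textbf{Step 2 (Local existence).} Given $s_0 \in [0,1]$, I would construct $K$ on a small neighborhood of $s_0$ in $I$. For $s$ sufficiently close to $s_0$, the homogeneous symplectomorphism $\wh\phi_s \circ \wh\phi_{s_0}^{-1}$ is close to the identity, and the Lagrangian $\Lambda_{\wh\phi_s \circ \wh\phi_{s_0}^{-1}}$ admits a (conical) generating function $S(y_1,y_2,s)$ defined near the diagonal. One then defines locally
\begin{equation}
    K^{\mathrm{loc}} \coloneqq \bfk_{\{(y_1,y_2,s) \mid S(y_1,y_2,s) \ge 0\}}
\end{equation}
(or the appropriate $\bfk$-shifted variant), and verifies directly from the definition of microsupport that $\mathring{\MS}(K^{\mathrm{loc}})$ is exactly $\Lambda_{\wh\phi_{(\cdot)} \wh\phi_{s_0}^{-1}}$ near $s_0$, and that $K^{\mathrm{loc}}|_{s=s_0} \simeq \bfk_{\Delta_Y}$.

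\textbf{Step 3 (Gluing and properness).} Cover the compact interval $[0,1]$ by finitely many open intervals $I_0,\dots,I_N$ on each of which Step~2 produces a local quantization. I would inductively extend $K$ from $[0,s_k]$ to $[0,s_{k+1}]$ by composing (via the convolution $\circ$ over $Y$) the sheaf already built with the local quantization on $I_k$; on the overlap, the two candidates agree up to unique isomorphism by Step~1, allowing a coherent gluing. Properness of both projections $\Supp(K) \to Y \times I$ is built into the local model (the support of $K^{\mathrm{loc}}$ projects properly to either factor because $\wh\phi_s$ is a diffeomorphism), and is preserved under finite composition since $[0,1]$ is compact.

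The hardest step is Step~2: one has to pin down a local model whose microsupport is exactly $\Lambda_{\wh\phi}$ and nothing larger, and whose restriction at $s_0$ is genuinely $\bfk_{\Delta_Y}$. The generating-function construction is classical in the homogeneous setting and guarantees the right microlocal behavior, but checking microsupport equality (as opposed to inclusion) on a singular region defined by a homogeneous generating function requires care. Once this is in place, Steps~1 and 3 are formal consequences of the microlocal calculus.
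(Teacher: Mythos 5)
This statement is not proved in the paper: it is imported verbatim as \cite[Thm.~3.7]{GKS}, so there is no in-paper proof to compare against. Your sketch is, however, a reasonable reconstruction of the outline of the Guillermou--Kashiwara--Schapira argument (uniqueness by microsupport propagation along $I$, local existence by generating functions, gluing by uniqueness). Two places deserve more care than your outline gives them.

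First, in Step~1 the microsupport estimate for $\cHom(K_1,K_2)$ is not the naive fiberwise sum $\MS(K_1)^a + \MS(K_2)$ but the operation $\widehat{+}$ of \cite[Def.~6.2.3]{KS90}, which also records directions arising as limits of sums $p_n+q_n$ with $p_n,q_n$ escaping to infinity in the fibers. Your cancellation argument (``two points of $\Lambda_{\widehat\phi}$ lying over the same base point must coincide and their $T^*I$-components cancel'') only controls the pointwise part; you still need to rule out sequences $p_n\in\Lambda_{\widehat\phi}^a$, $q_n\in\Lambda_{\widehat\phi}$ whose sum converges to a nonzero covector with $\eta_1=\eta_2=0$, $\sigma\neq 0$. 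This can be done using that the fibers of $\Lambda_{\widehat\phi}$ over a point are single rays with $(\eta_1,\eta_2)\neq 0$, but the limiting analysis is an essential part of the argument and cannot be skipped; it is also exactly where properness of $\Supp(K_i)\to Y\times I$ enters, which is why in GKS properness is established in tandem with uniqueness rather than afterwards.

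Second, in Step~2 a generating function $S(y_1,y_2,s)$ ``defined near the diagonal'' is not sufficient: as $s$ moves away from $s_0$ the Lagrangian $\Lambda_{\widehat\phi_s\circ\widehat\phi_{s_0}^{-1}}$ leaves any fixed neighborhood of the conormal of the diagonal, and the sheaf $\bfk_{\{S\ge 0\}}$ must have microsupport exactly $\Lambda$ globally, not only near $\Delta_Y$. GKS control this by first reducing to homogeneous isotopies whose Legendrian lift $\Lambda_{\widehat\phi}/\bR_{>0}$ in the cosphere bundle is compact (i.e.\ compactly supported isotopies), for which the graph over the conormal-to-diagonal admits a global homogeneous generating function when the isotopy is $C^1$-close to identity; the general case is then handled by a separate reduction. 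Your plan does not address this non-compactness, and as written the local model $K^{\mathrm{loc}}$ may pick up unwanted microsupport far from the diagonal. Once these two points are filled in, the gluing in Step~3 is indeed a formal consequence of uniqueness.
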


The object $K$ is called the \emph{sheaf quantization} or the \emph{GKS kernel} of $\wh{\phi}$.

Now we consider the non-homogeneous case. 
Let $\phi^H=(\phi^H_z)_{z \in I} \colon T^*X \times I \to T^*X$ be a Hamiltonian isotopy associated with a Hamiltonian function $H \colon T^*X \times I \to \bR$.
Here $z$ denotes the time parameter. 
We assume that there exists a subset $C$ of $T^*X$ such that $\pi_X$ is proper on $C$ and $\Supp(H) \subset C \times I$. 
Note that the Hamiltonian vector field is defined by $d\lambda_{T^*X}(X_{H_z},\ast)=-dH_{z}$ and $\phi^H$ is the identity for $z=0$.
One can conify $\phi^H$ and construct $\wh{\phi}$ such that $\wh{\phi}$ lifts $\phi^H$ as follows.
Define $\wh{H} \colon T^*X \times \rT \bR \times I \to \bR$ by  $\wh{H}_z(x,t;\xi,\tau) \coloneqq \tau \cdot H_z(x;\xi/\tau)$, which is homogeneous of degree $1$.
By the condition, we can extend $\wh{H}$ to $\rT(X \times \bR_t) \times I$.
We write $\wh{\phi} \colon \rT(X \times \bR) \times I \to \rT(X \times \bR)$ for the homogeneous Hamiltonian isotopy associated with $\wh{H}$, such that
\begin{equation*}
    \wh{\phi}_z(x,t;\xi,\tau)
    =
    (x',t+u_z(x;\xi/\tau);\xi',\tau),
\end{equation*}
where $(x';\xi'/\tau)=\phi^H_z(x;\xi/\tau)$ and 
\begin{equation}\label{eqn:hamilton-lift}
    u_z(x; \xi/\tau) = \int_0^z (H_{z'} - \lambda_{T^*X}(X_{H_{z'}})) \circ \phi_{z'}^H(x; \xi/\tau)\, dz'.
\end{equation}
See \cite[Subsection~A.3]{GKS} for more details.

We can apply \cref{thm:GKS} in the case where $Y=X \times \bR_t$ and $\wh{\phi}$ is constructed from $\phi^H \colon T^*X \times I \to T^*X$ as above to get the GKS kernel $K \in \Sh(X \times \bR \times X \times \bR \times I)$.
Set $K_{z} \coloneqq K|_{X \times \bR \times X \times \bR \times \{z\}} \in \Sh(X \times \bR \times X \times \bR)$.
It is also proved by Guillermou--Schapira~\cite[Prop.~4.29]{GS14} that the composition with $K_z$ defines a functor
\begin{equation*}
    K_z \circ (\ast) \colon \cT(T^*X) \to \cT(T^*X).
\end{equation*}
Moreover, for $F \in \cT_A(T^*X)$ and any $z \in I$, we have $K_z \circ F \simeq (K \circ F)|_{X \times \{z\}} \in \cT_{\phi^H_{z}(A)}(T^*X)$.
In other words, the composition $K_z \circ (\ast)$ induces a functor $\cT_{A}(T^*X) \to \cT_{\phi^H_s(A)}(T^*X)$ for any closed subset $A$ on $T^*X$.

\begin{remark}\label{rem:noncompact-gks}
    If $K \in \Sh(X_2 \times \bR \times X_2 \times \bR \times I)$ is the sheaf quantization of a Hamiltonian isotopy $\phi \colon T^*X_2 \times I \to T^*X_2$, the object $\bfk_{\Delta_{X_1}} \boxtimes K \in \Sh(X_1 \times X_2 \times \bR \times X_1 \times X_2 \times \bR \times I)$ defines a sheaf quantization of $\id_{T^*X_1} \times \phi \colon T^*(X_1 \times X_2) \times I \to T^*(X_1 \times X_2)$. This allows us to deal with products of properly supported Hamiltonians, which are usually non-proper.
    We will use this fact in \cref{section:shadow}.
\end{remark}

\section{Sheaf quantization of Lagrangian cobordism}

In what follows, until the end of the paper, let $M$ be a connected manifold without boundary.
In this section, we prove the existence of a sheaf quantization of exact Lagrangian cobordisms $V \subset T^*(M \times \bR_s)$, namely a simple sheaf $F_V \in \cT_V(T^*(M \times \bR_s))$.

Let $L \subset T^*M$ be a closed exact Lagrangian submanifold such that the restriction of the Liouville form $\lambda_{T^*M}|_L = df_L$. Define the conification of $L$ to be
\begin{equation*}
    \Lambda_L = \{(x, t; \xi, \tau) \mid t = f(x, \xi/\tau), \, \tau > 0\} \subset T^*_{\tau > 0}(M \times \bR_t).
\end{equation*}
The quotient $\Lambda_L/\bR_{>0} \subset T^*_{\tau > 0}(M \times \bR_t) / \bR_{>0}$ defines the Legendrian lift of $L$. Guillermou \cite{Gu12} proved that there exists a simple sheaf $F \in \Sh_{\Lambda_L}(M \times \bR_t)$. 
This was generalized by the third author to certain non-compact exact Lagrangians \cite[Section~3.3]{Li23}. We will follow that construction.

\subsection{Kashiwara-Schapira stack}
First, we introduce the Kashiwara--Schapira stack \cite[Section~6]{Gu12} or \cite[Part~X]{Gu23}, which is also called the brane category in \cite{JT_brane}. For a conical Lagrangian $\Lambda \subset T^*M$, the obstruction for the existence of a global section in the Kashiwara--Schapira stack is going to be purely algebraic topological.

\begin{definition}\label{def:KSstack}
    Let $\Lambda \subset T^*M$ be a locally closed conical Lagrangian.
    \begin{enumerate}
        \item The Kashiwara--Schapira prestack $\muSh^\text{pre}_\Lambda$ is a presheaf of dg-categories on $\Lambda$, where for any open $\Lambda_0 \subset \Lambda$, the objects are $\Sh_{(\Lambda_0)}(M)$, and morphisms are
        \begin{equation*}
            \Hom_{\muSh^\text{pre}_\Lambda}(F, G) = \Hom_{\Sh(M; \Lambda_0)}(F, G);
        \end{equation*}
        \item The \emph{Kashiwara--Schapira stack} $\muSh_\Lambda$ is the stack associated to the prestack $\muSh^\text{pre}_\Lambda$.
    \end{enumerate}
\end{definition}

\begin{theorem}[Guillermou {\cite[Thm.~11.5~and~12.5]{Gu12}}]\label{thm:KSstack}
    Let $\Lambda \subset T^*M$ be a locally closed conical Lagrangian. 
    Suppose $\Lambda$ has vanishing Maslov class and vanishing relative second Stiefel--Whitney class. Then there exists a simple object in the Kashiwara--Schapira stack $\cF \in \muSh_\Lambda(\Lambda)$, and they are classified by rank~1 local systems on $\Lambda$.
\end{theorem}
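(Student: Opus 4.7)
The plan is to show that $\muSh_\Lambda$ is locally trivial in a controlled way, identify the gluing obstruction with a cohomology class on $\Lambda$ built from the Maslov and relative Stiefel--Whitney invariants, and then conclude by descent. Classification by rank~1 local systems will follow at once from the fact that simple objects have endomorphism ring $\bfk$ locally.

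\textbf{Step 1 (Local triviality).} First I would cover $\Lambda$ by small contractible open subsets $U_\alpha \subset \Lambda$ such that, for each $\alpha$, there is a homogeneous contact transformation of a conic neighborhood of $U_\alpha$ in $\rT M$ sending $\Lambda \cap U_\alpha$ to (a piece of) the conormal bundle of a smooth hypersurface $Z_\alpha$ in some $\bR^{n}$. Such normal forms for conic Lagrangians are standard Darboux-type results. Quantizing each contact transformation via the GKS kernel of \cref{thm:GKS} and convolving identifies $\muSh_\Lambda|_{U_\alpha}$ with the KS stack along $T^*_{Z_\alpha} \bR^n$; on the latter, the constant sheaf $\bfk_{Z_\alpha}$ manifestly furnishes a simple object, which pulls back to a simple object $F_\alpha \in \muSh_\Lambda(U_\alpha)$.

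\textbf{Step 2 (The cocycle and its identification).} On an overlap $U_{\alpha\beta} \coloneqq U_\alpha \cap U_\beta$ the two local simple objects are isomorphic only up to a cohomological shift and a sign, because the fibers of $\muSh_\Lambda$ over points of $\Lambda$ are (non-canonically) equivalent to the category of $\bZ$-graded $\bfk$-lines. Choosing explicit isomorphisms $\psi_{\alpha\beta} \colon F_\alpha \xrightarrow{\sim} F_\beta[n_{\alpha\beta}]$ and measuring the failure of the cocycle $\psi_{\beta\gamma}\circ\psi_{\alpha\beta}$ to equal $\psi_{\alpha\gamma}$ on triple overlaps produces classes
\begin{equation}
    [n_{\alpha\beta}] \in H^1(\Lambda;\bZ), \qquad [\epsilon_{\alpha\beta\gamma}] \in H^2(\Lambda;\bZ/2).
\end{equation}
The main obstacle, and the heart of the argument, is to identify the shift cocycle with the Maslov class $\mu(\Lambda)$ and the sign cocycle with the relative second Stiefel--Whitney class. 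For the Maslov side one traces the path of Lagrangian planes produced by the chosen contact transformations and matches its Maslov index with the shifts imposed by comparing conormal simple sheaves. For the sign side one matches the local trivializations of the orientation data attached to a simple sheaf with the square root of $w_1$ and with $w_2$ relative to a Lagrangian polarization, reproducing the construction from \cite{Gu12}.

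\textbf{Step 3 (Gluing and classification).} Once $\mu(\Lambda)=0$ and $w_2(\Lambda)=0$ (relatively), both cocycles are coboundaries, so after modifying each $\psi_{\alpha\beta}$ by a scalar and a shift one obtains genuine descent data for the stack $\muSh_\Lambda$, which glues to a global simple object $\cF \in \muSh_\Lambda(\Lambda)$. For the classification statement, given two global simple objects $\cF, \cF'$, the sheaf $\cHom_{\muSh_\Lambda}(\cF, \cF')$ is pointwise a $\bfk$-line, hence a rank~1 local system on $\Lambda$; conversely tensoring $\cF$ with any rank~1 local system $\cL$ yields another simple object, showing that the groupoid of simple objects is a torsor over $\mathrm{Loc}_1(\Lambda)$ and thus classified by $H^1(\Lambda; \bfk^\times)$.
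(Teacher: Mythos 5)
The paper imports this theorem verbatim from \cite[Thm.~11.5~and~12.5]{Gu12} and supplies no proof, so there is no internal argument to check your sketch against. Your outline does reproduce the architecture of Guillermou's original proof: local simple objects via homogeneous contact transformations and GKS quantization, obstruction cocycles on double and triple overlaps, identification of the shift and sign data with the Maslov class and the relative second Stiefel--Whitney class, and descent once both vanish, followed by the torsor classification by rank~1 local systems. Two remarks on precision. First, after fixing the shifts the residual ambiguity in the $\psi_{\alpha\beta}$ is a scalar in $\bfk^\times$, so the obstruction on triple overlaps a priori lives in $H^2(\Lambda;\bfk^\times)$; that it factors through the map induced by $\{\pm 1\}\hookrightarrow\bfk^\times$ and equals the relative second Stiefel--Whitney class is exactly the content of \cite[Thm.~12.5]{Gu12} and is not automatic. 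Second, the matching of the shift cocycle $[n_{\alpha\beta}]$ with the Maslov class is the other nontrivial half and is likewise only asserted. You flag both identifications as ``the heart of the argument,'' so there is no wrong step in your plan, but essentially all of the substance of the theorem sits in exactly those two computations; Steps~1 and~3 are correct and standard.
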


\subsection{Guillermou doubling functor}
For a conical Lagrangian $\Lambda \subset T^*_{\tau > 0}(M \times \bR_t)$, we can define 
\begin{align*}
    \Lambda_q &= \{(x, t, u; \xi, \tau, 0) \in T^*(M \times \bR_t \times \bR_u) \mid (x, t; \xi, \tau) \in \Lambda, \tau > 0\}, \\
    \Lambda_r &= \{(x, t, u; \xi, \tau, -\tau) \in T^*(M \times \bR_t \times \bR_u) \mid (x, t-u; \xi, \tau) \in \Lambda, \tau > 0\}.
\end{align*}
When $\Lambda / \bR_{>0}$ is compact, Guillermou \cite[Section 13-15]{Gu12} defined a doubling functor from the Kashiwara--Schapira stack $\muSh_\Lambda(\Lambda)$ to a simple sheaf $F_\mathrm{dbl}$, whose microsupport 
\begin{equation*}
    \mathring{\MS}(F_\mathrm{dbl}) \subset (\Lambda_q \cup \Lambda_r) \cap T^*(M \times \bR_t \times (0, \varepsilon)).
\end{equation*}
In the previous work of the third author \cite[Section 3.3]{Li23}, we generalized the theorem to the case where $\Lambda / \bR_{>0}$ is non-compact under a suitable assumption, that there exists a tubular neighborhood of the Lagrangian submanifold of positive radius with respect to some Riemannian metric adapted to the symplectic form.

Let $X$ be a manifold with a Riemannian metric $g$ and $L \subset X$ be an embedded submanifold. A (tubular) neighborhood $U$ of $L$ of positive radius $r > 0$ with respect to a metric $g$ on $X$ is a (tubular) neighborhood $U$ such that for any $x \in X$ with $d_g(x, L) \leq r$, we have $x \in U$.

The Riemannian metric we are going to fix is the following. Let $g$ be a complete Riemannian metric on $M$. It induces a complete Riemannian metric $g_{T^*M}$ on $T^*M$. We call $g_{T^*M}$ a complete adapted metric on the symplectic manifold $T^*M$, following the notion of \cite[Section 2.2.6]{EliashbergGromov98}.
We set $T^{*,\infty}M \coloneqq \rT M/\bR_{>0}$, which we identify with $\{|\xi|=1 \}$.

\begin{theorem}[{\cite[Cor.~3.16]{Li23}}]\label{thm:doubling-largeshift}
    Let $L \subset T^*M$ be an exact Lagrangian submanifold with a properly embedded conification $\Lambda_L \subset T^*(M \times \bR_t)$ whose Maslov class and relative second Stiefel--Whitney class vanish. 
    If $L \subset T^*M$ admits a tubular neighborhood of positive radius with respect to the complete adapted metric, then for any $u > 0$, there exists a doubled sheaf
    \begin{equation*}
        F_{\mathrm{dbl}} \in \Sh_{\Lambda_q \cup \Lambda_r}(M \times \mathbb{R}_t \times (0, u))
    \end{equation*}
    that is simple along $\Lambda_q \cap T^*(M \times \bR_t \times (0, u))$.
\end{theorem}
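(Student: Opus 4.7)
The plan is to adapt Guillermou's doubling construction from the compact case to the non-compact setting, using the positive-radius tubular neighborhood hypothesis to supply the uniformity needed to glue local constructions. First, apply Theorem \ref{thm:KSstack} to the conification $\Lambda_L$ to obtain a simple object $\cF \in \muSh_{\Lambda_L}(\Lambda_L)$; this step only uses the vanishing of the Maslov class and the relative second Stiefel--Whitney class, and works in the non-compact regime without change. The task then reduces to upgrading the microsheaf $\cF$ to a genuine sheaf on $M \times \bR_t \times (0, u)$ whose microsupport is contained in $\Lambda_q \cup \Lambda_r$ and which is simple along $\Lambda_q$.

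In the compact case due to Guillermou, the key device is the GKS kernel (Theorem \ref{thm:GKS}) for the homogeneous Hamiltonian $-\tau$ on $T^*(M \times \bR_t)$, whose flow is the $t$-translation $(x, t; \xi, \tau) \mapsto (x, t-u; \xi, \tau)$. Given any local realization $F$ of $\cF$ and the GKS kernel $K$ of this flow, the composition $K \circ F$ differs from $F$ precisely by the translation in the $u$-direction; taking the mapping cone of the canonical morphism (or a suitable variant) yields a sheaf on $M \times \bR_t \times (0, \varepsilon)$ whose microsupport at each fixed $u$ is $\Lambda_q \cup \Lambda_r$ and which is simple along $\Lambda_q$ by construction. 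When $\Lambda_L/\bR_{>0}$ is compact, a single $\varepsilon > 0$ works uniformly, and local doublings can be glued via the stack property of $\muSh_{\Lambda_L}$.

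For non-compact $\Lambda_L/\bR_{>0}$, two obstacles appear: local realizations of $\cF$ a priori live only on bounded conic pieces of $\Lambda_L$, and the displacement parameter $\varepsilon$ could shrink to zero as one moves to infinity in $\Lambda_L$. The positive-radius tubular neighborhood with respect to the complete adapted metric $g_{T^*M}$ is exactly what rescues both. It provides a uniform geometric scale $r > 0$ on which $L$ (equivalently, $\Lambda_L$ after conification) is separated from itself, so that one can cover $\Lambda_L$ by conic charts of uniformly positive size, perform Guillermou's doubling in each chart with a common $\varepsilon = \varepsilon(r)$, and glue the resulting local sheaves using the descent property of $\muSh_{\Lambda_L}$ — the gluing is coherent because the underlying microsheaf data is the restriction of the global $\cF$.

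Finally, to pass from $u \in (0, \varepsilon)$ to arbitrary $u > 0$, one iteratively composes with the GKS kernel of $-\tau$: each application corresponds to further $t$-translation, and the microsupport pattern $\Lambda_q \cup \Lambda_r$ is preserved because $\Lambda_q$ is flow-invariant at the level of fibers and $\Lambda_r$ is by definition the graph of the flow applied to $\Lambda$. The main obstacle I expect is checking that all the uniform estimates survive this iteration and the gluing; without the positive-radius assumption the local doublings can degenerate at infinity in $\Lambda_L$, so that hypothesis does essential work both in constructing the initial small-$u$ doubling and in propagating it.
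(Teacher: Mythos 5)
This theorem is cited directly from \cite[Cor.~3.16]{Li23}; the paper at hand does not reprove it, so there is no internal proof to check your attempt against. That said, your sketch does track the outline the paper describes around the statement: obtain a simple object of $\muSh_{\Lambda_L}(\Lambda_L)$ via \cref{thm:KSstack}, run Guillermou's doubling, and use the positive-radius tubular neighborhood to replace the compactness of $\Lambda_L/\bR_{>0}$ that Guillermou's original argument relies on.

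Two caveats on your details. First, Guillermou's doubling functor is not merely ``take the mapping cone of the canonical morphism'' of a local realization with its $T_u$-translate; the local model already carries microsupport on both $\Lambda_q$ and $\Lambda_r$, and the nontrivial content is that the local doublings glue coherently via the descent data of the stack, which your phrasing compresses into a one-liner. Second, your mechanism for passing from $(0,\varepsilon)$ to arbitrary $(0,u)$ does not work as stated: composing $F_{\mathrm{dbl}}|_{u_0}$ with the GKS kernel of the $-\tau$ translation by $a$ carries the microsupport $\Lambda \cup T_{u_0}\Lambda$ to $T_a\Lambda \cup T_{a+u_0}\Lambda$, translating \emph{both} pieces, whereas what is needed is to keep $\Lambda$ fixed and move only $T_{u_0}\Lambda$ further away. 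The large-shift extension (following Guillermou, adapted in \cite{Li23}) rests instead on a convolution-type compatibility of the doubled sheaf in the $u$-parameter together with the uniform propagation control secured by the positive-radius hypothesis; naive iteration of a single translation kernel does not reproduce it.
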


\subsection{Sheaf quantization of Lagrangian cobordisms}

\cref{thm:doubling-largeshift} enables us to work with sheaf quantization problems for more general non-compact Lagrangian submanifolds. 
For instance, we may consider asymptotically conical Lagrangian cobordisms between asymptotic Lagrangian submanifolds, which is already dealt with in \cite{JT_brane}. 

\begin{definition}
    Let $L \subset T^*M$ be an exact Lagrangian. One says that $L$ is an \emph{exact Lagrangian with conical end} if $L \cap T^*M$ is a conical Lagrangian outside a neighborhood $U \subset T^*M$ of the zero section such that $\pi|_U \colon U \rightarrow M$ is proper.
    One sets $L_\infty \coloneqq (L \setminus U)/\bR_{>0} \subset T^{*,\infty}M$.
\end{definition}
\begin{remark}
    In particular, $L_\infty$ is a Legendrian submanifold of $T^{*,\infty}M$. 
    Such a Lagrangian $L$ is called a \emph{Lagrangian filling} of the Legendrian $L_\infty$.
\end{remark}

Here is a technical prerequisite. For exact Lagrangians in $T^*M$ with closed Legendrian boundary in $T^{*,\infty}M$, by considering the standard adapted metric on $T^*M$, the following lemma is almost immediate:

\begin{lemma}\label{lem:nbhd-filling}
    Let $L \subset T^{*}M $ be a Lagrangian filling with compact Legendrian boundary $\Lambda \subset T^{*,\infty}M$. Then $L$ has a tubular neighborhood of positive radius $r > 0$ with respect to the adapted metric on $T^*M$.
\end{lemma}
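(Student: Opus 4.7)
The plan is to split $L$ into a compact part near the zero section and a conical end at infinity, build a uniform positive-radius tubular neighborhood on each piece separately, and take the minimum. By the conical-end hypothesis on $L$, choose $R_0>0$ so that $L \cap \{|\xi|_{g^*} > R_0\}$ is $\bR_{>0}$-invariant and coincides with the radial cone over the compact Legendrian $\Lambda$. Set $L_{\mathrm{cpt}} \coloneqq L \cap \{|\xi|_{g^*} \le 2R_0\}$ and $L_{\mathrm{cone}} \coloneqq L \cap \{|\xi|_{g^*} \ge R_0\}$. The piece $L_{\mathrm{cpt}}$ is compact: its outer portion is the image of $\Lambda \times [R_0, 2R_0]$ under the scaling map $(p,r) \mapsto r\cdot p$, while its inner core lies in the region $L \cap U$ where $\pi|_U$ is proper. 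Since $L$ cannot escape a compact region of $T^*M$ except along its conical end, which is linked to the compact $\Lambda$, this inner core is itself compact. The classical tubular neighborhood theorem applied to the compact submanifold $L_{\mathrm{cpt}}$ with the adapted metric then furnishes some $r_1 > 0$.

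For $L_{\mathrm{cone}}$, I would use the polar decomposition $\xi = r\theta$ with $r = |\xi|_{g^*}$ and $\theta \in T^{*,\infty}_x M$. In the horizontal-vertical decomposition coming from the Levi-Civita connection of $g$, the adapted Sasaki metric $g_{T^*M}$ on $T^*M \setminus 0_M$ splits orthogonally as $g \oplus (dr^2 + r^2 h)$, where $h$ denotes the round metric on the unit cosphere $\{|\theta|_{g^*} = 1\} \subset T^*_xM$. Since $\Lambda \subset T^{*,\infty}M$ is a compact embedded submanifold, it admits a tubular neighborhood of some positive radius $r_\Lambda > 0$ in the cosphere bundle equipped with its natural metric. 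Because $L_{\mathrm{cone}}$ is the cone over $\Lambda$ in the region $r \ge R_0$, its tangent space at $(x, r\theta)$ is spanned by the radial direction $\partial_r$ and the lift of $T_{(x,\theta)}\Lambda$; the factor of $r^2$ in front of $h$ then means that, in the Sasaki metric, the angular component of normal balls to $L_{\mathrm{cone}}$ grows linearly in $r$, while the radial and horizontal-normal directions stay uniformly bounded below. The infimum of the tubular radius along $\{r \ge R_0\}$ is therefore attained near the inner boundary $r = R_0$ and yields a uniform $r_2 > 0$ on the entire cone.

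Taking $r \coloneqq \min(r_1, r_2) > 0$ produces the desired uniform tubular radius for $L$: on the overlap $R_0 \le |\xi|_{g^*} \le 2R_0$, which is compact, both estimates apply and no degeneration occurs. The main obstacle I expect is the cone-end step, specifically checking that the $r^2$-scaling of the angular part of the adapted metric genuinely prevents the tubular radius from collapsing at infinity, and that the normal exponential interacts compatibly with the radial-scaling symmetry of $L_{\mathrm{cone}}$. Since $L$ is honestly $\bR_{>0}$-invariant beyond $R_0$ and $\Lambda$ is compact, however, the problem reduces to an essentially compact tubular neighborhood question for $\Lambda$ inside $T^{*,\infty}M$, which is presumably why the surrounding text calls the lemma ``almost immediate''.
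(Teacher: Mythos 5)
Your proposal is correct and follows essentially the same route as the paper: split $L$ into a compact core near the zero section plus the conical end over $\Lambda$, and handle the end by observing that the adapted metric in polar form dominates a product metric at large radius, so the tubular-radius problem at infinity reduces to the compact one for $\Lambda \subset T^{*,\infty}M$. The paper makes the cone step slightly crisper by explicitly invoking the comparison $g_{T^*M} \ge g_{T^{*,\infty}M} + ds^2$ for large $|\xi|$ (rather than the heuristic that the infimum of the tubular radius occurs at the inner boundary $r=R_0$), but the key observation — the $r^2$-scaling of the angular part and compactness of $\Lambda$ — is identical.
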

\begin{proof}
    We only need to find a tubular neighborhood of a positive radius outside a compact neighborhood of the zero section. Suppose $L \cap \{ |\xi|>r \} = \Lambda \times (r, +\infty)$. Observe that the standard adapted metric $g_{T^*M}$ can be written as $s^2g_{T^{*,\infty}M} + ds^2$. When $r$ is large, it is bounded from below by the product metric $g_{T^{*,\infty}M} + ds^2$. Since the compact Legendrian $\Lambda$ has a tubular neighborhood of positive radius, we can conclude that so does $L$.
\end{proof}

In order to show existence of sheaf quantization for the non-compact Lagrangian $V \subset T^*(M \times \bR_s)$, a necessary condition is that the conification $\Lambda_V/\bR_{>0} \subset \rT(M \times \bR_s \times \bR_t) / \bR_{>0}$ is properly embedded. 
Therefore, we need to carefully choose the primitive $f_V$ with $\lambda_{T^*(M \times \bR_s)}|_V = df_V$.

Following Jin--Treumann~\cite{JT_brane}, we consider the following condition. 
The proof of Jin--Treumann works also in this slightly generalized setting since we have assumed that the Lagrangian is conical outside a subset that projects properly onto $M$.
    
\begin{lemma}[{\cite[Subsection~3.6]{JT_brane}}]
    Let $L \subset T^*M$ be an exact Lagrangian with conical ends. 
    There exists a Hamiltonian isotopy from $V$ to $V'$ such that for any $R > 0$, the primitive $f_{V'}$ with $\lambda_{T^*(M \times \bR_s)}|_{V'} = df_{V'}$ is bounded from below on $V' \cap T^*(M \times [-R, R])$.
\end{lemma}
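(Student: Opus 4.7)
The plan is to analyze $f_V$ piece by piece and, when necessary, apply a Hamiltonian isotopy supported near the cylindrical ends. First, I would decompose $V \cap T^*(M \times [-R, R])$ into the (essentially compact) middle $V \cap T^*(M \times [-1, 1])$ and the cylindrical slabs $L_i \times [-R, -1] \times \{i\}$ and $L'_j \times [1, R] \times \{j\}$. On each cylindrical slab the Liouville form restricts as $\lambda_{T^*M}|_{L_i} + i\, ds$, so the primitive takes the form
\begin{equation}
    f_V = f_{L_i} + i \cdot s + c_i
\end{equation}
for a primitive $f_{L_i}$ of $L_i$ and a constant $c_i$, with an analogous formula on the positive end. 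On the middle piece, continuity across the ends together with the conical-end assumption on $V_\infty$ ensure $f_V$ is bounded.

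On the compact $s$-interval $[-R, -1]$, the linear term $i \cdot s$ contributes a bounded quantity, so boundedness-below of $f_V$ on this slab reduces to boundedness-below of $f_{L_i}$ on $L_i$. When $M$ is closed this is immediate: $L_i$ is conical outside a compact neighborhood $U$ of the zero section (on which $f_{L_i}$ is continuous and hence bounded), and on $L_i \setminus U$ the Liouville form vanishes so $f_{L_i}$ is locally constant. In the general setting, the remedy is to choose, for each $L_i$, a Hamiltonian $h_i$ on $T^*M$ with support in the $\pi$-proper neighborhood $U$ whose flow shifts $f_{L_i}$ to be bounded below while leaving the conical ends untouched, and analogously for each $L'_j$.

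Finally, these auxiliary isotopies can be packaged into a single Hamiltonian on $T^*(M \times \bR_s)$ via a cutoff construction:
\begin{equation}
    H(x, s; \xi, \sigma) = \sum_{i} \chi_i(s) h_i(x; \xi) + \sum_j \chi'_j(s) h'_j(x; \xi),
\end{equation}
where $\chi_i$ (resp.\ $\chi'_j$) is a bump function in $s$ supported in $(-\infty, -1)$ (resp.\ $(1, +\infty)$). Since the support lies outside the slab $[-1, 1]$, the resulting flow preserves the cobordism structure of $V$ and, on each cylindrical end, acts fiberwise as the corresponding $T^*M$-isotopy; hence the primitive on the $i$-th negative end of $V'$ is $f_{L'_i} + i \cdot s + c_i$ with $f_{L'_i}$ bounded below. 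The main technical obstacle is in the second paragraph: constructing the $h_i$ so that the new primitive is globally bounded below on a possibly non-compact $L_i$ without disturbing the conical ends, and this is where the construction in \cite[Subsection~3.6]{JT17} enters. Assembling the resulting bounds yields the desired lower bound on $f_{V'}$ over each compact $s$-slab.
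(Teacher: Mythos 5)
The paper does not actually prove this lemma; it cites \cite[Subsection~3.6]{JT17} and asserts in a single sentence that the Jin--Treumann argument carries over because the Lagrangian is conical outside a subset projecting properly to $M$. So there is no internal proof to compare against, and your proposal should be judged as an independent attempt. Your decomposition of $V \cap T^*(M \times [-R,R])$ into the cylindrical slabs and the middle piece, and the formula $f_V = f_{L_i} + i\cdot s + c_i$ on a slab, are correct (the Liouville form restricts there as $\lambda_{T^*M}|_{L_i} + i\,ds$). Your observation that for closed $M$ the boundedness is essentially automatic is also right: $\pi|_U$ proper plus $M$ compact gives $L_i \cap U$ compact, and on the conical part $f_{L_i}$ is locally constant with finitely many values because the Legendrian boundary is compact. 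This is a useful reduction that the paper leaves implicit.

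The gap is in the proposed remedy for the non-compact case. A Hamiltonian $h_i$ supported in $U$ has flow equal to the identity on $L_i \setminus U$, and by \eqref{eqn:hamilton-lift} the correction term $u_z = \int_0^z (H_{z'} - \lambda(X_{H_{z'}})) \circ \phi^H_{z'}\,dz'$ vanishes there; hence $f_{L_i}$ is unchanged on the entire conical part. So this construction cannot repair unboundedness that originates from the Legendrian boundary (e.g., infinitely many components), which is precisely where an isotopy would be needed when $M$ is non-compact. The Jin--Treumann isotopy is \emph{not} supported in the fiberwise-bounded region $U$; it must interact with the conical ends. A secondary issue: the Hamiltonian $\chi_i(s)h_i(x;\xi)$ has $\dot\sigma = -\chi_i'(s)h_i$, so its flow moves the slices $\{\sigma = i\}$ on the interpolation region; one should choose $\chi_i$ locally constant near $s = -1$ and verify the cobordism conditions of \cref{def:lag-cob} are preserved. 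Finally, "continuity ensures $f_V$ is bounded on the middle piece" is too quick when $M$ is non-compact: the boundedness there again requires the compactness of $V_\infty \cap T^{*,\infty}(M \times [-1,1])$ together with the properness of $\pi$ on the non-conical part, which you should make explicit. Since you yourself flag the second paragraph as the place where the Jin--Treumann construction must enter, your proposal matches the paper's own level of rigor; but the specific mechanism you suggest for the general case would not work as stated.
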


Now we introduce the notion of exact Lagrangian cobordisms with conical ends.

\begin{definition}\label{def:lag-cob}
    Let $L_0,\dots,L_{m-1}$ and $L'_0,\dots,L'_{n-1} \subset T^*M$ be exact Lagrangian submanifolds with conical ends (both of which can be empty). 
    An \emph{exact Lagrangian cobordism with conical ends} $V \subset T^*(M \times \bR_s)$ between $(L_0,\dots, L_{m-1})$ and $(L'_0,\dots,L'_{n-1})$ is a properly embedded exact Lagrangian submanifold with conical end $V_\infty$ such that $V_\infty \cap T^{*,\infty}(M \times [-1, 1])$ is compact and
    \begin{align*}
        V \cap T^*(M \times (-\infty, -1]) & = \bigcup_{i=0}^{m-1}L_i \times (-\infty, -1] \times \{i\}, \\ 
        V \cap T^*(M \times [+1, +\infty)) & = \bigcup_{j=0}^{n-1}L'_j \times [+1, +\infty) \times \{j\}.
    \end{align*}
\end{definition}

\begin{remark}
    When we allow more than one connected components on each ends, then the connected sum of two exact Lagrangian cobordisms may not be exact anymore, unless we include the choice of primitives of $L_i$'s and $L'_j$'s as part of the data for the Lagrangian brane and require the primitive of $V$ to restrict to the chosen primitives of $L_i$'s and $L'_j$'s. 
\end{remark}

Our main result in this section is the existence of sheaf quantization for exact Lagrangian cobordisms. Recall that for the exact Lagrangian $V \subset T^*(M \times \bR)$ with primitive $\lambda_{T^*(M \times \bR_s)}|_V = df_V$, the conification is defined by
\begin{equation*}
    \Lambda_V = \lc (x, s, t; \xi, \sigma, \tau) \mid (x, s; \xi/\tau, \sigma/\tau) \in V, \, t = - f_V(x, s; \xi/\tau, \sigma/\tau), \, \tau > 0 \rc.
\end{equation*}

\begin{theorem}\label{thm:sheaf-quan-cob-filling}
    Let $V \subset T^*(M \times \bR_s)$ be an exact Lagrangian cobordism with conical ends between $(L_0,\dots,L_{m-1})$ and $(L'_0,\dots,L'_{n-1})$  whose Maslov class and relative second Stiefel--Whitney class vanish. 
    Then there is a simple sheaf quantization 
    \begin{equation*}
        F_V \in \Sh_{\Lambda_V}(M \times \bR_s \times \bR_t).
    \end{equation*}
\end{theorem}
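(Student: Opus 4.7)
The plan is to combine the obstruction-theoretic existence of a simple object in the Kashiwara--Schapira stack (\cref{thm:KSstack}) with the Guillermou doubling functor in the form of \cref{thm:doubling-largeshift}. The two concrete things to verify are that, after a suitable Hamiltonian isotopy, the conification $\Lambda_V \subset T^*(M\times\bR_s\times\bR_t)$ is properly embedded, and that $V$ admits a tubular neighborhood of positive radius with respect to the complete adapted metric on $T^*(M\times\bR_s)$.

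First I would apply the Jin--Treumann-type lemma recalled above to replace $V$ by a Hamiltonian isotopic cobordism $V'$ whose primitive $f_{V'}$ is bounded from below on $V' \cap T^*(M\times[-R,R])$ for every $R>0$. A Hamiltonian isotopy preserves the Maslov and relative second Stiefel--Whitney class hypotheses, and by GKS (\cref{thm:GKS}) a sheaf quantization of $V'$ transports back to a sheaf quantization of $V$, so I may assume from the outset that $f_V$ has this property. With this choice the conification $\Lambda_V$ is properly embedded in $T^*(M\times\bR_s\times\bR_t)$.

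The main technical step is the tubular neighborhood. I would break $V$ into three pieces: the two cylindrical ends $V\cap T^*(M\times(-\infty,-1]) = \bigsqcup_i L_i\times(-\infty,-1]\times\{i\}$ and its positive-end counterpart, and the ``middle'' piece $V\cap T^*(M\times[-1,1])$. On each cylindrical end the Lagrangian is a disjoint product, so taking the product of the adapted metric on $T^*M$ with the standard metric on $T^*\bR_s$ and applying \cref{lem:nbhd-filling} to each $L_i$ (whose Legendrian end $L_{i,\infty}\subset T^{*,\infty}M$ is compact) yields a product tubular neighborhood of uniformly positive radius. On the middle piece, the hypothesis that $V_\infty\cap T^{*,\infty}(M\times[-1,1])$ is compact means that this portion of $V$ is a Lagrangian filling with compact Legendrian boundary at infinity, and a straightforward adaptation of \cref{lem:nbhd-filling} (allowing the $\bR_s$-direction to be noncompact but with compact Legendrian at $T^{*,\infty}$) again yields a tubular neighborhood of positive radius. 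Interpolating between the product and adapted metrics by a cutoff in a compact overlap and taking the minimum of the three radii produces a global tubular neighborhood of positive radius for $V$.

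With these preliminaries, the proof is then immediate: the vanishing of the Maslov class and the relative second Stiefel--Whitney class together with \cref{thm:KSstack} yield a simple object $\mathcal{F}\in\muSh_{\Lambda_V}(\Lambda_V)$, and \cref{thm:doubling-largeshift} applied to $V$ produces, for each $u>0$, a simple doubled sheaf on $M\times\bR_s\times\bR_t\times(0,u)$ with microsupport in $(\Lambda_V)_q\cup(\Lambda_V)_r$. Restricting at a suitably chosen $u=u_0$ in the simple range gives the desired $F_V\in\Sh_{\Lambda_V}(M\times\bR_s\times\bR_t)$. The main obstacle is the tubular-neighborhood step: the cylindrical ends of $V$ are not conical in $T^*(M\times\bR_s)$ (only ``product-conical''), so \cref{lem:nbhd-filling} cannot be invoked directly on $V$, and the argument must combine the product construction on the ends with the compact-boundary argument in the middle via a careful choice of metric and cutoff. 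This is precisely what allows us to bypass the reduction to an eventually conical Lagrangian used in the Jin--Treumann approach alluded to in \cref{rem:morse-bott-intersect}.
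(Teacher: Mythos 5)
Your setup (Kashiwara--Schapira stack from vanishing Maslov and relative $w_2$, Jin--Treumann's lower/upper exactness, the positive-radius tubular neighborhood via \cref{lem:nbhd-filling} on each of the three pieces of $V$, and then \cref{thm:doubling-largeshift}) agrees with the paper. But the final step is wrong: restricting the doubled sheaf at a slice $u=u_0$ does \emph{not} give an object of $\Sh_{\Lambda_V}(M\times\bR_s\times\bR_t)$. By construction the restriction $F_{V,\mathrm{dbl},u_0}$ has microsupport in $\Lambda_V \cup T_{u_0}(\Lambda_V)$, and there is no ``simple range'' of $u_0$ for which the shifted copy disappears. One still has to \emph{separate} the two copies in the $t$-direction and discard the $T_{u_0}(\Lambda_V)$ branch, and then ``straighten out'' the remaining restriction to recover a sheaf on all of $M\times\bR_s\times\bR_t$.

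This is exactly the content of the paper's \cref{prop:quan-cob-single-bounded-filling}, and it is genuinely nontrivial because $V$ is noncompact in $s$: for a fixed $u_0$ the two copies $\Lambda_V$ and $T_{u_0}(\Lambda_V)$ cannot be separated in $t$ uniformly in $s$. The paper therefore first truncates to $M\times[-R,R]\times\bR_t$, where (using lower/upper exactness of $f_V$ and \cite[Lem.~3.6(a)]{JT17}) one finds $c$ and $u>c$ such that $T_u(\Lambda_V)$ projects into $\{t>c+1\}$ and $\Lambda_V$ is conically standard above $t=c-1$; one then restricts to $\{t<c\}$, where only $\Lambda_V$ remains in the microsupport, and pushes forward along a diffeomorphism $\rho_c$ composed with a GKS kernel $K(\phi^c_1)$ that straighten $\Lambda_V\cap\{t<c\}$ back to $\Lambda_V$. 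Finally one checks that these sheaves on $M\times[-R,R]\times\bR_t$ are compatible under $R\le R'$ (using local constancy of $\hat j_c^{-1}\hat i_R^{-1}F_{V,\mathrm{dbl}}$ along $[u,u']$ via \cite[Prop.~5.4.5]{KS90}) and glues. This truncation--separation--straightening--gluing argument is the heart of the proof beyond the doubling theorem, and your proposal skips it entirely. (As a small aside, the remark you intended to cite is \cref{rem:quan-jintreumann}, not \cref{rem:morse-bott-intersect}.)
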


Firstly, using the general result, we show the existence of a doubled sheaf quantization.

\begin{proposition}\label{prop:quan-cob-doubling-filling}
    Let $V \subset T^*(M \times \bR_s)$ be an exact Lagrangian cobordism with conical ends between $(L_0,\dots,L_{m-1})$ and $(L'_0,\dots,L'_{n-1})$ whose Maslov class and relative second Stiefel--Whitney class vanish. 
    Then there is a doubled sheaf
    \begin{equation*}
        F_{V,\mathrm{dbl}} \in \Sh_{\Lambda_{V,q} \cup \Lambda_{V,r}}(M \times \bR_s \times \bR_t \times (0, +\infty)),
    \end{equation*}
    which is simple along $\Lambda_{V,q} \cap T^*_{\tau>0}(M \times \bR_s \times \bR_t \times (0, +\infty))$.
\end{proposition}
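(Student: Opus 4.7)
The plan is to reduce the statement to a direct application of \cref{thm:doubling-largeshift}, applied to $V$ viewed as an exact Lagrangian with conical ends inside the ambient cotangent bundle $T^*(M \times \bR_s)$. What must be verified are the three hypotheses of that theorem: properness of the embedded conification $\Lambda_V$, vanishing of the Maslov and relative second Stiefel--Whitney classes (assumed), and the existence of a tubular neighborhood of positive radius with respect to the complete adapted metric on $T^*(M \times \bR_s)$.

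First I would arrange the primitive $f_V$ so that $\Lambda_V$ is properly embedded in $T^*(M \times \bR_s \times \bR_t)$. Applying the Jin--Treumann-type result recalled just before \cref{def:lag-cob} to $V$, we may replace $V$ by a Hamiltonian-isotopic cobordism (call it again $V$) for which $f_V$ is bounded from below on $V \cap T^*(M \times [-R, R])$ for every $R > 0$. Together with the product structure $V \cap T^*(M \times (-\infty, -1]) = \bigsqcup_i L_i \times (-\infty,-1] \times \{i\}$ and similarly on the positive end, this ensures that $t = -f_V(x,s;\xi/\tau,\sigma/\tau)$ stays controlled on any compact region of the base, so that $\Lambda_V$ is properly embedded.

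Next I would establish the tubular neighborhood condition. The conical end $V_\infty \cap T^{*,\infty}(M \times [-1,1])$ is compact by hypothesis, and on its image in $T^*(M \times \bR_s)$ we automatically get a tubular neighborhood of positive radius by compactness plus the argument in \cref{lem:nbhd-filling}. Outside $T^*(M \times [-1,1])$ the cobordism splits as a disjoint union of pieces of the form $L_i \times (-\infty,-1] \times \{i\}$ or $L'_j \times [+1,+\infty) \times \{j\}$. For each such piece the complete adapted metric on $T^*(M \times \bR_s)$ splits (up to comparison) as the product of the adapted metric on $T^*M$ with the Euclidean metric on $T^*\bR_s$ restricted to $\{\sigma = 0\}$, so a tubular neighborhood of positive radius for $L_i$ provided by \cref{lem:nbhd-filling} extends trivially in the $s$-direction. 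Since the different sheets are a fixed positive distance apart, the radii on all the pieces can be taken uniformly, and then glued to the radius obtained on the compact middle piece.

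With properness and the tubular neighborhood in hand, \cref{thm:KSstack} supplies a simple object $\cF \in \muSh_{\Lambda_V}(\Lambda_V)$, and then \cref{thm:doubling-largeshift} produces the doubled sheaf
\[
    F_{V,\mathrm{dbl}} \in \Sh_{\Lambda_{V,q} \cup \Lambda_{V,r}}(M \times \bR_s \times \bR_t \times (0,+\infty))
\]
that is simple along $\Lambda_{V,q}$, as required. I expect the genuinely delicate step to be the uniform-radius tubular neighborhood near the junction of the compact middle piece $V_\infty \cap T^{*,\infty}(M \times [-1,1])$ with the product ends: one must check that the metric comparison and the gluing of radii can be arranged simultaneously for all $m + n$ asymptotic sheets without interference, which is precisely where the conical end condition $V_\infty \cap T^{*,\infty}(M \times [-1,1])$ being compact is used.
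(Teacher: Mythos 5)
Your proposal is correct and follows essentially the same route as the paper: reduce to \cref{thm:doubling-largeshift} after arranging the primitive so that $\Lambda_V$ is properly embedded (via the Jin--Treumann lemma), then verify the tubular-neighborhood-of-positive-radius condition by splitting $V$ into the two conical product ends and the compact middle piece and applying \cref{lem:nbhd-filling} to each piece. The only difference is that you spell out the properness and the gluing of radii slightly more explicitly, whereas the paper handles properness in the setup preceding the proposition and packages the tubular-neighborhood check as \cref{lem:cob-nbhd-filling}.
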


By \cref{thm:doubling-largeshift}, it suffices to show the following lemma, which immediately follows from \cref{lem:nbhd-filling}.

\begin{lemma}\label{lem:cob-nbhd-filling}
    Let $V \subset T^*(M \times \bR_s)$ be an exact Lagrangian cobordism with conical ends between $(L_0,\dots,L_{m-1})$ and $(L'_0,\dots,L'_{n-1})$. 
    Then there is a tubular neighborhood of $V$ of positive radius with respect to the complete adapted metric.
\end{lemma}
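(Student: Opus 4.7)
My plan is to use the cobordism structure of $V$ from \cref{def:lag-cob} to decompose $V$ into a middle piece with compact ideal Legendrian boundary and two product ends, and then reduce each piece to \cref{lem:nbhd-filling}.

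Concretely, I would write $V = V^- \cup V^0 \cup V^+$, where $V^{-} = V \cap T^*(M \times (-\infty, -1])$, $V^{+} = V \cap T^*(M \times [1, +\infty))$, and $V^0 = V \cap T^*(M \times [-1,1])$. By hypothesis, $V^- = \bigcup_{i=0}^{m-1} L_i \times (-\infty, -1] \times \{i\}$ in coordinates $(x, s; \xi, \sigma)$, and analogously $V^+ = \bigcup_{j=0}^{n-1} L'_j \times [1, +\infty) \times \{j\}$. Each $L_i$ is an exact Lagrangian with conical ends, so \cref{lem:nbhd-filling} applied to $L_i$ yields a tubular neighborhood of positive radius $r_i > 0$ in $T^*M$ for the adapted metric. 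Under the product adapted metric on $T^*(M \times \bR_s) = T^*M \times T^*\bR_s$, the factors $(-\infty, -1] \subset \bR_s$ and $\{i\} \subset \bR_\sigma$ each admit tubular neighborhoods of any positive radius, so each component $L_i \times (-\infty, -1] \times \{i\}$ carries a tubular neighborhood of radius $r_i$. Because distinct components sit at distinct integer values of $\sigma$, they are mutually separated by distance at least $1$; taking $r^- < \tfrac{1}{2}\min\{1, r_0, \dots, r_{m-1}\}$ yields a common positive tubular radius on $V^-$, and the same argument produces $r^+ > 0$ for $V^+$.

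For the middle piece $V^0$, the key fact is that $V_\infty \cap T^{*,\infty}(M \times [-1,1])$ is compact, so $V^0$ is a Lagrangian with compact Legendrian ideal boundary over the compact base $M \times [-1,1]$. I would imitate the argument in the proof of \cref{lem:nbhd-filling}: outside a compact neighborhood of the zero section, the complete adapted metric dominates the corresponding product metric in the radial fiber direction, so a tubular neighborhood of positive radius of the compact Legendrian ideal boundary propagates to a tubular neighborhood of positive radius for the conical part of $V^0$. On the compact bounded part of $V^0$, a tubular neighborhood of positive radius exists trivially, and taking the minimum gives $r^0 > 0$.

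Finally, I would glue the three tubular neighborhoods. Near $s = \pm 1$, the product descriptions of $V^\pm$ and the description of $V^0$ share the same germ of $V$, so shrinking all radii to $r = \min\{r^-, r^0, r^+\}$ produces a single tubular neighborhood of $V$ of positive radius for the complete adapted metric. The main subtlety lies in the middle piece: one must verify that the positive tubular radius does not degenerate as $|(\xi,\sigma)| \to \infty$, which is precisely the content of the metric comparison in \cref{lem:nbhd-filling} and relies crucially on the compactness of $V_\infty \cap T^{*,\infty}(M \times [-1,1])$.
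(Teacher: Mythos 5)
Your proof is correct and follows essentially the same route as the paper's: decompose $V$ into the two product ends over $(-\infty,-1]$ and $[+1,+\infty)$ plus the middle piece over $[-1,1]$, apply \cref{lem:nbhd-filling} to each $L_i$, $L'_j$ to obtain positive tubular radii on the product ends, and invoke \cref{lem:nbhd-filling} again for the middle piece (where the compactness of $V_\infty \cap T^{*,\infty}(M \times [-1,1])$ is what makes the lemma applicable). Your extra care about the integer $\sigma$-separation of the different components and the gluing at $s = \pm 1$ spells out steps the paper leaves implicit, but the argument is the same.
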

\begin{proof}
    We show that all the three pieces of the Lagrangian $V \cap T^*(M \times (-\infty, -1])$, $V \cap T^*(M \times [+1, +\infty))$ and $V \cap T^*(M \times [-1, 1])$ have tubular neighborhoods of positive radius with respect to the complete adapted metric. 
    
    Consider $V \cap T^*(M \times (-\infty, -1])$ and $V \cap T^*(M \times [+1, +\infty))$. By \cref{lem:nbhd-filling}, we may assume that $L_0, \dots, L_{m-1}$ and $L'_0, \dots, L'_{n-1}$ have tubular neighborhoods $U_{L_0}, \dots, U_{L_{m-1}}$ and $U_{L'_0}, \dots, U_{L'_{n-1}}$. Then the neighborhoods
    \begin{equation*}
        \bigcup_{i=0}^{m-1} U_{L_i} \times (-\infty, -1] \times (i-\varepsilon, i+\varepsilon), \; \bigcup_{j=0}^{n-1} U_{L'_j} \times [+1, \infty) \times (j-\varepsilon, j+\varepsilon)
    \end{equation*}
    have a positive radius with respect to the complete adapted metric. 
    Then consider \linebreak $V \cap T^*(M \times [-1, 1])$. By \cref{lem:nbhd-filling}, we know that it has a tubular neighborhood of positive radius. This completes the proof.
\end{proof}

\cref{prop:quan-cob-doubling-filling} now directly follows from \cref{thm:doubling-largeshift} and \cref{lem:cob-nbhd-filling}.

Following \cite{Gu12,Gu23}, we try to separate the images of the projections of $\Lambda_V \cup T_u(\Lambda_V)$ on $M \times \bR_s \times \bR_t$. For Lagrangians with conical ends, we will need to cut off the projection of the conical ends, and thus we follow \cite{JT_brane}.

\begin{proposition}\label{prop:quan-cob-single-bounded-filling}
    Let $V \subset T^*(M \times \bR_s)$ be an exact Lagrangian cobordism with conical ends between $(L_0,\dots,L_{m-1})$ and $(L'_0,\dots,L'_{n-1})$ whose Maslov class and relative second Stiefel--Whitney class vanish. Then there is a simple sheaf
    \begin{equation*}
        F_{V \cap T^*(M \times [-R, R])} \in \Sh_{\Lambda_V}(M \times [-R, R] \times \bR_t).
    \end{equation*}
    In addition, for the inclusion $i_{R,R'} \colon M \times [-R, R] \times \bR_t \hookrightarrow M \times [-R', R'] \times \bR_t$,
    \begin{equation*}
        i_{R,R'}^{-1} F_{V \cap T^*(M \times [-R', R'])} \simeq F_{V \cap T^*(M \times [-R, R])}.
    \end{equation*}
\end{proposition}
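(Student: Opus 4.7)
The plan is to extract from the doubled sheaf $F_{V,\mathrm{dbl}}$ of \cref{prop:quan-cob-doubling-filling} a single simple sheaf quantization of $\Lambda_V$ over $s \in [-R, R]$, following the Guillermou doubling recipe \cite[Sec.~14]{Gu12} adapted to the cobordism setting along the lines of \cite{JT17}. First, I would restrict $F_{V,\mathrm{dbl}}$ to the region $M \times [-R,R] \times \bR_t \times (0, +\infty)$ and then use the lemma of \cite[Subsec.~3.6]{JT17} to arrange, after an initial Hamiltonian isotopy, that the primitive $f_V$ is bounded below on $V \cap T^*(M \times [-R, R])$. Since the $t$-coordinate on $\Lambda_V$ equals $-f_V(x, s; \xi/\tau, \sigma/\tau)$, this yields an upper bound $t \leq A_R$ on the image of $\Lambda_V$ over $s \in [-R, R]$ for some constant $A_R$ depending on $R$.

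Second, for $u_0 \gg A_R$ I would slice $F_{V,\mathrm{dbl}}$ at $u = u_0$, obtaining a sheaf $G$ on $M \times [-R,R] \times \bR_t$ whose microsupport satisfies $\mathring{\MS}(G) \subset \Lambda_V \cup T_{u_0}(\Lambda_V)$ by the slicing microsupport estimate. Using the upper $t$-bound on $\Lambda_V$ together with the $u_0$-translation of the second component, one isolates the $\Lambda_V$-component either by a suitable microlocal cut-off in $t$ (e.g.\ convolution with $\bfk_{\{t \leq A_R + u_0/2\}}$), or equivalently by passing to the Tamarkin projector $P_l = \bfk_{X \times [0,+\infty)} \star (\ast)$, where the $T_{u_0}$-translated copy is moved out of the window relevant for reduced microsupport in $\Lambda_V$. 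I define $F_{V \cap T^*(M \times [-R, R])}$ to be the resulting simple summand; simplicity at each generic point of $\Lambda_V$ is inherited from the simplicity of $F_{V,\mathrm{dbl}}$ along $\Lambda_{V,q}$ provided by \cref{prop:quan-cob-doubling-filling}.

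Finally, the compatibility $i_{R,R'}^{-1} F_{V \cap T^*(M \times [-R',R'])} \simeq F_{V \cap T^*(M \times [-R,R])}$ follows from the naturality of the doubled construction under restriction in $s$ together with the naturality of the slicing-and-cut-off extraction, provided the same $u_0$ is used for both $R$ and $R'$ (which is possible since $A_{R'} \geq A_R$). The main obstacle is the separation step: in the non-compact conical-ended setting the Lagrangians $\Lambda_V$ and $T_{u_0}(\Lambda_V)$ are not globally disjoint in the base, since both remain unbounded along the $t$-axis where the conical rays of $V$ lift to arbitrarily negative primitives. The extraction must therefore exploit simultaneously the Jin--Treumann one-sided $t$-bound and the positive-$\tau$ structure inherent to the conification, which is the key technical refinement beyond the closed Lagrangian case treated by Guillermou.
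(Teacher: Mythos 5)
Your proposal correctly identifies the starting data (the doubled sheaf from \cref{prop:quan-cob-doubling-filling}, sliced at a time $u_0$) and the need to separate $\Lambda_V$ from $T_{u_0}(\Lambda_V)$ over a compact $s$-range, but the extraction step as written does not work. The Tamarkin projector $P_l = \bfk_{X\times[0,\infty)}\star(-)$ only removes microsupport in $\{\tau\le 0\}$; both $\Lambda_V$ and $T_{u_0}(\Lambda_V)$ live in $\{\tau>0\}$, so $P_l$ cannot distinguish them. Convolving with $\bfk_{\{t\le A\}}$ is also not a cut-off in the front variable: for example $\bfk_{(-\infty,A]}\star\bfk_{[b,\infty)}\simeq 0$, so $\star$-convolution against a downward-closed interval annihilates objects of $\cD(X)$ rather than peeling off a sheet of the microsupport. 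What the paper does instead is pull back along the \emph{open} inclusion $j_c\colon M\times[-R,R]\times(-\infty,c)\hookrightarrow M\times[-R,R]\times\bR_t$ (so no new boundary microsupport is created), with $c$ chosen between the two sheets so that the restricted sheaf has $\mathring{\MS}\subset\Lambda_V$, and then re-expands via a diffeomorphism $\rho_c\colon M\times[-R,R]\times(-\infty,c)\to M\times[-R,R]\times\bR_t$ (identity below $c-1$) together with the GKS kernel $K(\phi_1^c)$ of a homogeneous Hamiltonian isotopy. The legitimacy of this re-expansion rests on the geometric fact, from \cite[Lem.~3.6(a)]{JT17}, that $\Lambda_V\cap T^*(M\times[-R,R]\times(c-1,+\infty))$ is a Lagrangian movie of the conical end; this ingredient is absent from your outline and is precisely what lets one control the microsupport under $\rho_c$ and $\phi^c_1$. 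Incidentally, your closing worry is misdirected: $f_V$ is constant along the conical rays of $V$ (the Liouville form vanishes there), so the conical ends do not send $t=-f_V$ to $-\infty$; the genuine difficulty is the re-expansion after truncation.

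Your compatibility argument is also incomplete. Taking ``the same $u_0$'' for $R$ and $R'$ is not generally available: a larger $R'$ can force a larger truncation level $c'$ and hence a larger slicing time $u'$, and even for a fixed slicing time the two constructions use different truncation levels $c,c'$ which must be reconciled. The paper handles this by noting that $\hat j_c^{-1}\hat i_R^{-1}F_{V,\mathrm{dbl}}$ has cohomology locally constant along $[u,u']$ (via \cite[Prop.~5.4.5]{KS90}), so the slicing time can be changed freely, and by observing that the restriction functors $\Sh_{\Lambda_q}(M\times[-R,R]\times\bR_t)\to\Sh_{\Lambda_q}(M\times[-R,R]\times(-\infty,c))$ are equivalences whose inverses are exactly the composition with $K(\phi_1^c)$ followed by ${\rho_c}_*$. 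You would need to supply both of these points to complete the proof.
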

\begin{proof}
    By \cref{prop:quan-cob-doubling-filling}, there exists a doubled sheaf
    \begin{equation*}
        F_{V, \mathrm{dbl}} \in \Sh_{\Lambda_{V,q} \cup \Lambda_{V,r}}(M \times \bR_s \times \bR_t \times (0, +\infty)_u),
    \end{equation*}
    or equivalently by \cref{thm:GKS} due to \cite{GKS} a sheaf for each time slice $u \in (0, +\infty)$
    \begin{equation*}
        F_{V,\mathrm{dbl},u} \in \Sh_{\Lambda_V \cup T_u(\Lambda_V)}(M \times \bR_s \times \bR_t).
    \end{equation*}
    Since $\Lambda_V \cap \rT(M \times [-R, R] \times \bR_t) /\bR_{>0}$ is upper exact with asymptotically conical ends, there exists a sufficiently large $c > 0$ and $u > c$ such that
    \begin{equation*}
        \pi_{M \times \bR \times [-R,R]}(T_{u} (\Lambda_V) \cap T^*(M \times [-R, R] \times \bR_t)) \subset M \times [-R, R] \times (c + 1, +\infty).
    \end{equation*}
    Moreover, \cite[Subsection~3.6, Lem.~(a)]{JT_brane} implies that for sufficiently large $c > 0$, the Lagrangian $\Lambda_V \cap T^*(M \times [-R, R] \times (c-1, +\infty))$ is the Lagrangian movie of the conical end
    \begin{equation*}
        V_\infty \times \bR_{>0} \cap T^*(M \times [-R, R])
    \end{equation*}
    with respect to the time parameter $(c-1, +\infty)$. Write $j_c \colon M \times [-R, R] \times (-\infty, c) \hookrightarrow M \times [-R, R] \times \bR_t$ and $i_R \colon M \times [-R, R] \times \bR_t \hookrightarrow M \times \bR_s \times \bR_t$ for the inclusion maps. Then we know that
    \begin{equation*}
        \mathring{\MS}(j_c^{-1}i_R^{-1} F_{V,\mathrm{dbl},u}) \subset \Lambda_V \cap T^*(M \times [-R, R] \times \bR_t).
    \end{equation*}
    Consider a diffeomorphism $\rho_c \colon M \times [-R, R] \times (-\infty, c) \rightarrow M \times [-R, R] \times \bR_t$ that is identity on $M \times [-R, R] \times (-\infty, c-1)$. 
    Then consider a homogeneous Hamiltonian isotopy $\phi_z^c \colon \rT(M \times [-R, R] \times \bR_t) \to \rT(M \times [-R, R] \times \bR_t)$ for time $z \in [0, 1]$ that is identity on $M \times [-R, R] \times (-\infty, c-1)$ such that
    \begin{equation*}
        \phi_1^c \circ d\rho_c^{-1} (\Lambda_V \cap T^*(M \times [-R, R] \times (-\infty, c)) = \Lambda_V,
    \end{equation*}
    where $d\rho_c^{-1} \colon T^*(M \times [-R, R] \times (-\infty, c)) \to T^*(M \times [-R, R] \times \bR_t)$ is the pull-back map of the smooth map $\rho_c^{-1}$ on the cotangent bundle. Then $F_{V \cap T^*(M \times [-R, R])} = K(\phi_1^{c}) \circ {\rho_c}_* j_c^{-1}i_R^{-1} F_{V,\mathrm{dbl},u}$ defines a simple sheaf quantization. 
    
    Finally, we check the compatibility of the sheaves with respect to the embeddings $i_{R,R'} \colon M \times [-R, R] \times \bR_t \hookrightarrow M \times [-R', R'] \times \bR_t$, namely 
    \begin{equation*}
        K(\phi_1^{c}) \circ {\rho_c}_* j_c^{-1}i_R^{-1} F_{V,\mathrm{dbl},u} \simeq i_{R,R'}^{-1} \big(K(\phi^{c'}) \circ  {\rho_{c'}}_* j_{c'}^{-1}i_{R'}^{-1} F_{V,\mathrm{dbl},u'} \big),
    \end{equation*}
    where we may assume that $u \leq u'$ and $c \leq c'$. Let $j_{c,c'} \colon M \times [-R', R'] \times (-\infty, c) \hookrightarrow M \times [-R', R'] \times (-\infty, c')$ be the embedding. 
    Write
    \begin{align*}
        \hat i_{R} \colon M \times [-R, R] \times \bR_t \times [u, u'] & \hookrightarrow M \times \bR_s \times \bR_t \times [u, u'], \\
        \hat j_{c} \colon M \times [-R, R] \times (-\infty, c) \times [u, u'] & \hookrightarrow M \times [-R, R] \times \bR_t \times [u, u'].
    \end{align*}
    Then we know that 
    \begin{equation*}
        \hat j_{c}^{-1} \hat i_{R}^{-1} F_{V, \mathrm{dbl}} \in \Sh_{\Lambda_{V,q}}(M \times [-R, R] \times (-\infty, c) \times [u, u']).
    \end{equation*} 
    By \cite[Prop.~5.4.5]{KS90} the cohomology of $\hat j_{c}^{-1} \hat i_{R}^{-1} F_{V, \mathrm{dbl}}$ is locally constant along $[u, u']$. Therefore, by restricting to slices,
    \begin{equation*}
        j_c^{-1}i_R^{-1} F_{V,\mathrm{dbl},u} \simeq j_{c,c'}^{-1}i_{R,R'}^{-1} \big( j_{c'}^{-1}i_{R'}^{-1} F_{V, \mathrm{dbl}, u'} \big).
    \end{equation*}
    Then the compatibility follows from the fact that the restrictions
    \begin{align*}
        \Sh_{\Lambda_q}(M \times [-R, R] \times \bR_t) & \to \Sh_{\Lambda_q}(M \times [-R, R] \times (-\infty, c')), \\
        \Sh_{\Lambda_q}(M \times [-R, R] \times \bR_t) & \to \Sh_{\Lambda_q}(M \times [-R, R] \times (-\infty, c))
    \end{align*}
    are equivalences with inverses given by the GKS composition with $K(\phi_1^c)$ and $K(\phi_1^{c'})$ together with the push forward ${\rho_c}_*$ and ${\rho_{c'}}_*$, whose restriction to $M \times [-R, R] \times (-\infty, c-1)$ is the identity.
\end{proof}

\begin{proof}[Proof of \cref{thm:sheaf-quan-cob-filling}]
    For any $R > 0$, by \cref{prop:quan-cob-single-bounded-filling}, there exists a simple sheaf
    \begin{equation*}
        F_{V \cap T^*(M \times [-R, R])} \in \Sh_{\Lambda_V}(M \times [-R, R] \times \bR_t)
    \end{equation*}
    compatible with respect to $i_{R,R'} \colon M \times [-R, R] \times \bR_t \hookrightarrow M \times [-R', R'] \times \bR_t$. Hence we can construct a simple sheaf $F_V \in \Sh_{\Lambda_V}(M \times \bR_s \times \bR_t)$ such that $i_R^{-1}F_V = F_{V \cap T^*(M \times [-R, R])}$.
\end{proof}

\begin{remark}\label{rem:wrap-quantization}
    Alternatively, we can apply the positive wrapping functor of Kuo \cite[Thm.~1.2]{Kuo23} given by taking colimits for all positive Hamiltonians supported away from $\Lambda$. This will help us avoid taking truncations at a finite portion $M \times [-R, R] \times \bR_t$. More precisely, we can consider a Hamiltonian flow $\phi_z^c \colon \rT(M \times \bR_s \times \bR_t) \to \rT(M \times \bR_s \times \bR_t)$ such that $\phi_z^c(\Lambda_V) = \Lambda_V$ and $\phi_z^c(T_u(\Lambda_V)) = T_{u+z}(\Lambda_V)$. Then $\phi_z^c$ defines a cofinal positive Hamiltonian flow in the complement of $\Lambda_V$ in the sense of \cite[Lem.~3.31]{Kuo23}, and by \cite[Thm.~1.2]{Kuo23}, we know
    \begin{equation*}
        F_V \coloneqq \operatorname{colim}_{z \in \bR} K(\phi_z^c) \circ F_{V,\text{dbl},u} \in \Sh_{\Lambda_V}(M \times \bR_s \times \bR_t).
    \end{equation*}
    Furthermore, one can show that $F_V$ is simple along $\Lambda_V$ as $F_{V,\text{dbl},u}$ is simple along $\Lambda_V$. This provides another proof of \cref{thm:sheaf-quan-cob-filling}. However, we will need the existence of the integration flow for non-compactly supported Hamiltonians, which requires the choice of a complete adapted metric on the non-compact manifold $T^*(M \times \bR)$. 
\end{remark}

\begin{remark}\label{rem:quan-jintreumann}
    Jin--Treumann \cite[Subsection 1.11.4]{JT_brane} have stated that such a sheaf quantization theorem should hold after deforming the Lagrangian cobordism so that it becomes lower exact and satisfies \cite[Assumption~3.7]{JT_brane}. 
    However, $V$ is a product Lagrangian outside compact subset that is not eventually conical with respect to the standard Liouville form on $T^*(M \times \bR_s)$. 
    One will need to carefully choose a Hamiltonian isotopy on $T^*(M \times \bR_s)$ to verify the assumption (due to a similar issue mentioned after \cite[Assumption~3.7]{JT_brane}, the assumption does not hold for arbitrary deformation that makes the cobordism lower exact). 
    Our proof here relies on \cref{thm:doubling-largeshift}, which is independent of \cite[Assumption~3.7]{JT_brane}, though we assume lower/upper exactness to ensure that $\Lambda_V$ is properly embedded in $\rT(M \times \bR_s \times \bR_t)$.
\end{remark}

\cref{thm:sheaf-quan-cob-filling} allows us to define the sheaf quantization of exact Lagrangian cobordisms in the Tamarkin category.

\begin{corollary}\label{cor:sheaf-quan-cob-filling}
    Let $V \subset T^*(M \times \bR_s)$ be an exact Lagrangian cobordism with conical ends between $(L_0,\dots,L_{m-1})$ and $(L'_0,\dots,L'_{n-1})$ whose Maslov class and relative second Stiefel--Whitney class vanish. Then there is a simple sheaf quantization 
    \begin{equation*}
        F_V \in \cT_V(T^*(M\times \bR_s)).
    \end{equation*}
\end{corollary}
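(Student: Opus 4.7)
The plan is to obtain the required object by localizing the sheaf produced in \cref{thm:sheaf-quan-cob-filling} into the Tamarkin category, and then checking that both the reduced microsupport condition and the simplicity condition survive this localization.

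\medskip

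\textbf{Step 1: Pass to the Tamarkin category.} First I would take $F_V \in \Sh_{\Lambda_V}(M \times \bR_s \times \bR_t)$ provided by \cref{thm:sheaf-quan-cob-filling} and apply the left projector
\begin{equation*}
P_l = \bfk_{M \times \bR_s \times [0,+\infty)} \star (\ast) \colon \Sh(M \times \bR_s \times \bR_t) \longrightarrow {}^\perp \Sh_{\{\tau \le 0\}}(M \times \bR_s \times \bR_t).
\end{equation*}
This defines the candidate object $P_l(F_V) \in \cD(M \times \bR_s)$, which I will by abuse of notation still denote by $F_V$.

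\textbf{Step 2: Compute the reduced microsupport.} Because $\Lambda_V$ lies entirely in $\{\tau > 0\}$ by its very definition, one has $\MS(F_V) \cap \Omega_+ = \Lambda_V$. Unwinding the formula $\rho(x,s,t;\xi,\sigma,\tau)=(x,s;\xi/\tau,\sigma/\tau)$ and the description of $\Lambda_V$ given just before \cref{thm:sheaf-quan-cob-filling}, one immediately obtains $\rho(\Lambda_V) = V$. Since $V$ is closed in $T^*(M \times \bR_s)$, we get
\begin{equation*}
\RS(F_V) = \overline{\rho(\MS(F_V) \cap \Omega_+)} = \overline{\rho(\Lambda_V)} = V,
\end{equation*}
so $F_V \in \cD_V(M \times \bR_s)$ as required.

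\textbf{Step 3: Simplicity.} The convolution with $\bfk_{M \times \bR_s \times [0,+\infty)}$ only adds microlocal contributions in the closed half-space $\{\tau \le 0\}$ and does not modify the microgerm of $F_V$ along $\Lambda_V \subset \{\tau > 0\}$. Consequently, the image of $P_l(F_V)$ in the Kashiwara--Schapira stack $\muSh_{\Lambda_V}(\Lambda_V)$ coincides with that of $F_V$, and therefore remains a simple object in the sense of \cref{thm:KSstack}. This is precisely what it means for $F_V$ to be a simple sheaf quantization in $\cD_V(M \times \bR_s)$.

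\medskip

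I do not expect any serious obstacle: \cref{thm:sheaf-quan-cob-filling} does all the geometric and topological work, and the only remaining content is the verification that the Tamarkin localization is harmless both for the reduced microsupport and for simplicity, which follows from the fact that $\Lambda_V$ already lives in $\Omega_+$. The closest thing to a subtlety is making sure that $P_l$ does not spoil simplicity, which is handled by noting that the added singularities lie in $\{\tau \le 0\}$ and hence are invisible in the Kashiwara--Schapira stack along $\Lambda_V$.
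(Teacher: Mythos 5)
Your proof is correct and follows exactly the route the paper has in mind (the paper gives no explicit proof, treating the corollary as immediate from \cref{thm:sheaf-quan-cob-filling}): apply the Tamarkin projector, observe that the added microsupport lies in $\{\tau \le 0\}$ so the reduced microsupport and the microlocal germ along $\Lambda_V \subset \Omega_+$ are unchanged. The only nitpick is that the theorem guarantees $\mathring{\MS}(F_V) \subset \Lambda_V$ rather than equality, so you should write $\RS(F_V) \subset V$ instead of $\RS(F_V) = V$ — but this containment is all that membership in $\cD_V(M\times\bR_s)$ requires.
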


\section{Lagrangian shadows and interleaving distance}\label{section:shadow}

In this section, we prove an iterated cone decomposition of sheaves and the interleaving distance bound of sheaves by the shadow. 

For notational convenience, we set $J=\bR_s$. 
Throughout this section, we let $F \in \cT(T^*(M \times J))$, where $\RS(F) = \overline{\rho(\MS(F) \cap \Omega_+)}$ is not necessarily Lagrangian, and define 
\begin{equation*}
    \Pi(F) \coloneqq \overline{ p(\RS(F)) } = \overline{ p(\rho( \MS(F) \cap \Omega_+)) } \subset T^*J,
\end{equation*}
where $p \colon T^*(M \times J) \to T^*J$ is the projection.
We assume the following condition until the end of this section, which essentially means that $\RS(F)$ is a singular coisotropic cobordism.

\begin{assumption}\label{assumption:proj}
    The (coisotropic) set $\Pi(F)$ satisfies 
    \begin{align*}
        \Pi(F) \cap T^*(-\infty,-1] & \subset (-\infty,-1] \times \bigcup_{i=0}^{m-1} \{i\}, \\
        \Pi(F) \cap T^*[+1,+\infty) & \subset [+1,+\infty) \times \bigcup_{j=0}^{n-1} \{j\}.
    \end{align*}
\end{assumption}

\subsection{Twisted complex and cone decomposition}

For $S \subset J$, we denote  $F|_{M \times S \times \bR_t}$ by $F_S$. 
We shall show that $F_{-1}=F_{\{-1\}}$ and $F_{+1}=F_{\{+1\}}$ admit iterated cone decompositions obtained through the microlocal cut-off functor. 

In general, for $H \in \cT(T^*X)$, we say that $H$ admits an iterated cone decomposition of $F^0, F^1, \dots, F^{k-1}, F^k$ and write 
\begin{equation*}
    H \simeq \ld F^0 \to F^1 \to \dots \to F^{k-1} \to F^{k} \rd
\end{equation*}
if there also exists $H \simeq E^{-1}, E^0, E^1 , \dots, E^{k-1}, E^k \simeq 0$ such that we have exact triangles
\begin{equation*}
    F^i \to E^{i-1} \to E^{i} \toone
\end{equation*}
for $i=0,\dots,k$. In particular, $H \simeq \ld F^0 \to F^1 \to \dots \to F^k \rd$ means that 
\begin{equation*}
    H \simeq \Cone(\Cone( \cdots \Cone(F^0[-k] \to F^1[1-k]) \to \dots  \to F^{k-1}[-1]) \to F^k).
\end{equation*}
This is equivalent to a twisted complex in $\cT(T^*X)$ introduced in \cite{BondalKapranov} for general dg-categories. 
We will use this standard fact in Sect.~\ref{subsec:lagrangian-intersection}.

We can show that the above condition of iterated cone decomposition for $H$ is equivalent to the following one: there exists $H \simeq G^{-1}, G^0, G^1, \dots, G^{k-1}, G^k \simeq 0$ such that we have exact triangles
\begin{equation*}
    G^i \to G^{i-1} \to F^i \toone
\end{equation*}
for $i=0,\dots,k$. In other words,
\begin{equation*}
    H \simeq \Cone(F^0[-1] \to \Cone(F^1[-1] \to \dots \to \Cone(F^{k-1}[-1] \to F^k)\cdots)).
\end{equation*}

\begin{lemma}\label{lem:iterated-cone}
    Let $F^0, F^1, \dots, F^{k-1}, F^k \in \cT(T^*X)$. For any iterated cone decomposition
\begin{equation*}
    H \simeq \Cone(\Cone( \cdots \Cone(F^0[-k] \to F^1[1-k]) \to \dots \to F^{k-1}[-1]) \to F^k),
\end{equation*}
    there exist natural morphisms such that
\begin{equation*}
    H \simeq \Cone(F^0[-1] \to \Cone(F^1[-1] \to \dots \to \Cone(F^{k-1}[-1] \to F^k)\cdots)).
\end{equation*}
\end{lemma}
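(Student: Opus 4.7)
My plan is to induct on $k$ and exchange the outermost cones using the octahedral axiom. The base cases $k = 0, 1$ are tautological since the two expressions coincide literally, so I assume $k \ge 2$ and that the statement holds for every sequence of length at most $k$. For brevity, I will write $[F^0, \dots, F^k]_{\ell}$ and $[F^0, \dots, F^k]_{r}$ for the left-nested and right-nested expressions appearing in the lemma.

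First I would apply the inductive hypothesis to the subsequence $(F^0, \dots, F^{k-1})$, rewriting
\[
[F^0, \dots, F^{k-1}]_{\ell} \simeq [F^0, \dots, F^{k-1}]_{r} = \Cone\bigl(F^0[-1] \to R\bigr),
\]
where $R \coloneqq [F^1, \dots, F^{k-1}]_{r}$. Shifting by $[-1]$ and substituting into the defining relation $[F^0, \dots, F^k]_{\ell} = \Cone([F^0, \dots, F^{k-1}]_{\ell}[-1] \to F^k)$ gives
\[
[F^0, \dots, F^k]_{\ell} \simeq \Cone\bigl(\Cone(F^0[-2] \to R[-1]) \to F^k\bigr).
\]
Next, I would invoke the octahedral axiom on the composable pair $R[-1] \to \Cone(F^0[-2] \to R[-1]) \to F^k$, whose first cone is $F^0[-1]$ (obtained by rotating the defining triangle of $\Cone(F^0[-2] \to R[-1])$). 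This produces a triangle
\[
F^0[-1] \to \Cone(R[-1] \to F^k) \to [F^0, \dots, F^k]_{\ell} \toone,
\]
yielding $[F^0, \dots, F^k]_{\ell} \simeq \Cone(F^0[-1] \to \Cone(R[-1] \to F^k))$.

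It then remains to identify $\Cone(R[-1] \to F^k)$ with $[F^1, \dots, F^k]_{r}$. Applying the inductive hypothesis in the reverse direction to $(F^1, \dots, F^{k-1})$ converts $R = [F^1, \dots, F^{k-1}]_{r}$ into $[F^1, \dots, F^{k-1}]_{\ell}$, so that $\Cone(R[-1] \to F^k) \simeq [F^1, \dots, F^k]_{\ell}$ by the left-nested definition; a final use of the inductive hypothesis, now for $(F^1, \dots, F^k)$, converts this to $[F^1, \dots, F^k]_{r}$, completing the proof.

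I expect the main obstacle to be purely notational: each use of the inductive hypothesis relabels indices and introduces a shift, so one has to trace shifts carefully through the nested cones. A more conceptual route, which would sidestep these manipulations entirely, is to recognise both expressions as the totalisation of the same twisted complex in the sense of Bondal--Kapranov \cite{BondalKapranov} (equivalently, as the colimit of the same filtered diagram in the underlying stable $\infty$-category), from which the equivalence is automatic.
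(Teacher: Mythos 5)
Your proof is correct, and the route is closely related to the paper's but packaged differently. The paper builds a $3\times 3$ diagram (displayed as a $4\times 4$ grid of objects, with exact triangles along the rows and columns after completion) and invokes \cite[Exercise~10.6]{KS06} in the triangulated setting, or \cite[Lem.~3.6]{AnnoLogvinenko} for the dg-enhanced version, to fill it in; the two descriptions of $H$ are then read off from the third row and the third column. You instead invoke the octahedral axiom on the composable pair $R[-1]\to\Cone(F^0[-2]\to R[-1])\to F^k$. These are two standard ways of packaging the same compatibility of cones, so the mathematical content is essentially the same. One substantive difference in presentation: the paper proves the base case of three objects and leaves the induction step implicit, whereas you unpack the induction and correctly find that it needs \emph{three} applications of the hypothesis (two forward and one in reverse), because $\Cone(R[-1]\to F^k)$ is only a mixed-nested cone and still requires normalizing in its own right. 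That care is worth keeping, since naively peeling off the outermost layer would not close the induction. Your closing observation, that both sides are totalizations of the same twisted complex in the sense of Bondal--Kapranov, is exactly the remark the paper makes immediately after this lemma, so you and the authors agree on the conceptual shortcut.
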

\begin{proof}
    We can consider the following commutative diagram with solid arrows:
    \[
    \begin{tikzcd}[column sep=15pt]
        F^0[-2] \ar[r] \ar[d] & 0 \ar[d] \ar[r] & F^0[-1] \ar[d,dotted] \ar[r] & F^0[-1] \ar[d] \\
        F^1[-1] \ar[r] \ar[d] & F^2 \ar[r] \ar[d] & \Cone(F^1[-1] \to F^2) \ar[r] \ar[d,dotted] & F^1 \ar[d] \\
        \Cone(F^0[-2] \to F^1[-1]) \ar[r,dotted] \ar[d] & F^2 \ar[r,dotted] \ar[d,dotted] & H \ar[r,dotted] \ar[d] & \Cone(F^0[-1] \to F^1) \ar[d] \\
        F^0[-1] \ar[r] & 0 \ar[r] & F^0 \ar[r] & F^0.
     \end{tikzcd}
    \]
    Then by \cite[Exercise~10.6]{KS06} in the triangulated setting and \cite[Lem.~3.6 and Example~(3)]{AnnoLogvinenko} in the dg enhanced setting, the dotted arrows can be completed so that the right bottom square is anti-commutative, all the other squares are commutative, and all the rows and columns are exact triangles 
    In particular, we have
    \begin{align*}
        H &\simeq \Cone(F^0[-1] \to \Cone(F^1[-1] \to F^2)) \\
        &\simeq \Cone(\Cone(F^0[-2] \to F^1[-1]) \to F^2).
    \end{align*}
    Thus we can complete the proof by induction.
\end{proof}

The main result in this section is an iterated cone decomposition at the two ends for sheaves $F \in \cT(T^*(M \times J))$ that satisfies \cref{assumption:proj}, where we recall that $J = \bR_s$.

\begin{theorem}\label{theorem:iterated_cone}
    Suppose $F \in \cT(T^*(M \times J))$ satisfies \cref{assumption:proj}. There exist sheaves $F_{-1}^i \in \cT(T^*M)$ for $i = 0, \dots, m-1$ such that
    \begin{equation*}
        F_{(-\infty,-1]}
        \simeq 
        \ld \pi_0^{-1}F_{-1}^0 \to \pi_1^{-1}F_{-1}^1 \to \dots \to \pi_{m-2}^{-1}F_{-1}^{m-2} \to \pi_{m-1}^{-1}F_{-1}^{m-1} \rd,
    \end{equation*}
    and sheaves $F_{+1}^j \in \cT(T^*M)$ for $j = 0, \dots, n-1$ such that
    \begin{equation*}
        F_{[+1, +\infty)}
        \simeq 
        \ld \pi_{n-1}^{-1}F_{+1}^{n-1} \to \pi_{n-2}^{-1}F_{+1}^{n-2} \to \dots \to \pi_1^{-1}F_{+1}^1 \to \pi_0^{-1}F_{+1}^0 \rd, 
    \end{equation*}
    where $\pi_i \colon M \times J \times \bR_t \to M \times \bR_t; (x,s,t) \mapsto (x,t-is)$.
    In particular, 
    \begin{align*}
        F_{-1} \simeq  
        \ld F_{-1}^0 \to T_{1}F_{-1}^1 \to \dots \to T_{m-2}F_{-1}^{m-2} \to T_{m-1}F_{-1}^{m-1} \rd, \\
        F_{+1} \simeq \ld T_{1-n}F_{+1}^{n-1} \to T_{2-n}F_{+1}^{n-2} \to \dots \to T_{-1}F_{+1}^1 \to F_{+1}^0 \rd.
    \end{align*}
\end{theorem}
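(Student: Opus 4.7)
The plan is to establish the iterated cone decomposition of $F_{(-\infty,-1]}$; the statement for $F_{[+1,+\infty)}$ will follow by a symmetric argument. Write $F_- \coloneqq F_{(-\infty,-1]}$. By \cref{assumption:proj}, the projection $p(\RS(F_-))$ lies in $(-\infty,-1] \times \{0,1,\dots,m-1\} \subset T^*(-\infty,-1]$, so in the Tamarkin region $\{\tau>0\}$ the microsupport of $F_-$ lies in the union of disjoint cones $\Lambda_k \subset \{\sigma = k\tau\}$ for $k = 0, 1, \dots, m-1$. The goal is to peel off the layers one at a time and identify each piece as a pullback along $\pi_k$.

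The first ingredient is a classification of single-layer sheaves: any $G \in \cD(M \times J)$ with $\RS(G)|_{s \le -1}$ contained in the single layer $\{\sigma = k\tau\}$ satisfies $G|_{M \times (-\infty,-1] \times \bR_t} \simeq \pi_k^{-1} H$ for a unique $H \in \cD(M)$. Indeed, the layer condition is the annihilator of the vector field whose integral curves are exactly the fibers of $\pi_k$; equivalently, a shear diffeomorphism of the form $\alpha_k(x,s,t) = (x,s,t-ks)$ carries the layer $\{\sigma = k\tau\}$ to $\{\sigma = 0\}$, and a sheaf with reduced microsupport in $\{\sigma = 0\}$ is pulled back from $M \times \bR_t$ via the projection $(x,s,t) \mapsto (x,t)$.

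The second ingredient is a family of microlocal cutoff functors $\Phi_{\le k}$ for $k = -1, 0, \dots, m-1$, each equipped with a natural transformation $\Phi_{\le k} \to \id$, such that (i) $\RS(\Phi_{\le k}(G))$ restricted to $(-\infty,-1]$ satisfies $\sigma/\tau \le k+\tfrac12$; (ii) the natural transformation is an isomorphism on sheaves already satisfying this bound; and (iii) $\Phi_{\le -1}(F_-) \simeq 0$ while $\Phi_{\le m-1}(F_-) \simeq F_-$. One realizes $\Phi_{\le k}$ by convolution with $\bfk_{\gamma_k}$, where $\gamma_k \subset J \times \bR_t$ is a closed convex cone whose polar cone, intersected with $\{\tau>0\}$, equals $\{\sigma/\tau \le k+\tfrac12\}$; the microsupport bound (i) is the standard estimate on convolution \cite[Prop.~5.4.14]{KS90}, and the natural transformation is induced by the map $\bfk_{\{0\}} \to \bfk_{\gamma_k}$. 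Applied to $F_-$ we obtain a tower
\[
0 \simeq \Phi_{\le -1}(F_-) \to \Phi_{\le 0}(F_-) \to \cdots \to \Phi_{\le m-1}(F_-) \simeq F_-,
\]
whose successive cones $C_k \coloneqq \Cone(\Phi_{\le k-1}(F_-) \to \Phi_{\le k}(F_-))$ have reduced microsupport concentrated in the single layer $\{\sigma = k\tau\}$. By the classification, $C_k \simeq \pi_k^{-1} F_-^k$ for some $F_-^k \in \cD(M)$, which is the desired decomposition. Restricting the tower to $s=-1$ then yields the decomposition of $F_{-1}$ with translations $T_k F_-^k$ via $(\pi_k^{-1}F_-^k)|_{s=-1} \simeq T_k F_-^k$.

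The main obstacle I anticipate is constructing the cutoff functors $\Phi_{\le k}$ so that (i)--(iii) hold simultaneously on the half-space $\{s \le -1\}$: the cone $\gamma_k$ must be chosen carefully to sharply separate consecutive integer layers while respecting the Tamarkin condition $\tau>0$, and one must ensure that the successive cones truly isolate a single layer rather than acquiring spurious microsupport from the boundary $s = -1$. If the direct convolution approach proves technically awkward on the half-line, an alternative is to realize the cutoff via the GKS kernel of a homogeneous Hamiltonian isotopy that asymptotically compresses the layers above the $k$-th one, producing an idempotent endofunctor of the Tamarkin category with the same microsupport properties.
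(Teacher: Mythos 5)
Your proposal is essentially the same as the paper's proof: both construct a filtration of $F_{(-\infty,-1]}$ by convolving with constant sheaves on cones $\gamma_k \subset \bR_s \times \bR_t$ whose polar cones separate the integer slopes $\sigma/\tau = 0, 1, \dots, m-1$, obtain a tower with natural maps to/from the identity, identify each graded piece via the microsupport estimate as a sheaf supported in a single layer $\{\sigma = k\tau\}$, and then invoke the constancy result (\cite[Prop.~5.4.5]{KS90}) to write it as a pullback along the shear $\pi_k$. The only cosmetic differences are the choice of filtration direction (you truncate from above, the paper from below) and the phrasing of the cutoff as an idempotent endofunctor, which does not change the argument.
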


\begin{figure}[tbh]
    \centering
    \includegraphics[width=0.5\textwidth]{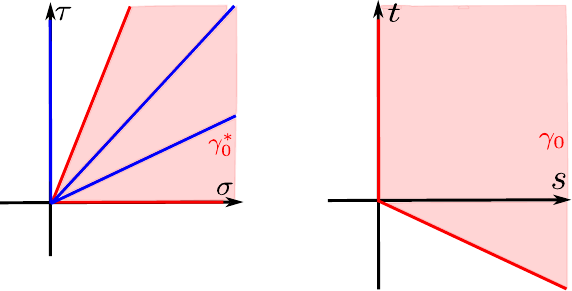}
    \caption{The cone $\gamma_i$ and dual cone $\gamma_i^*$ for $i = 0, 1, \dots, m$ in the proof of \cref{theorem:iterated_cone}.}
    \label{fig:cutoff}
\end{figure}

\begin{proof}
    Consider the assertion for $F_{(-\infty, -1]}$ and $F_{-1}$. For $i=0, 1,\dots, m$, we define a sequence of cones $\gamma_0 \supset \gamma_1 \supset \dots \supset \gamma_m$ in $\bR^2$ and their dual cones $\gamma_0^* \subset \gamma_1^* \subset \dots \subset \gamma_{m}^*$ in the dual space $\bR^2$ by
    \begin{equation*}
        \gamma_i \coloneqq \lc (s, t) \relmid 0 \le s, -\lb i + \frac{1}{2} \rb s \le t \rc, \; \gamma_i^* \coloneqq \lc (\sigma, \tau) \relmid \sigma \ge \lb i + \frac{1}{2} \rb \tau \ge 0 \rc.
    \end{equation*}
    Set $G^{-1} \coloneqq F$. Using \cref{eq:star} we iteratively define 
    \begin{equation*}
        G^i \coloneqq \bfk_{\gamma_i} \star G^{i-1} 
        \quad \text{and} \quad 
        F^i \coloneqq \bfk_{\gamma_i \setminus \{0\}} \star G^{i-1}.
    \end{equation*}
    Since the cone $\gamma_i$ is supported in $\{(s, t) \mid s \ge 0\}$ as in \cref{fig:cutoff}, we find that the stalks of $\bfk_{\gamma_i} \star G^{i-1}$ at $\{(s, t) \mid s = s_0\}$ depends only on the sections of $G^{i-1}$ on $\{(s, t) \mid s \leq s_0\}$.
    This implies that $G^i_{(-\infty,-1]}$ depends only on $G^{i-1}_{(-\infty,-1]}$.
    Consider the exact triangle
    \begin{equation*}
        G^i_{(-\infty,-1]} \to G^{i-1}_{(-\infty,-1]} \to F^i_{(-\infty, -1]} \toone.
    \end{equation*}
    By the microlocal cut-off lemma, we know that
    \begin{equation*}
        \mathring{\MS}(G^i_{(-\infty, -1]}) 
        \subset \mathring{\MS}(G^{i-1}_{(-\infty, -1]}) \cap T^*M \times ((-\infty,-1] \times \bR_t) \times \gamma_i^*.
    \end{equation*}
    Since $\gamma_0^* \subset \gamma_1^* \subset \dots \subset \gamma_{m}^*$, we can inductively show that
    \begin{equation*}
        \mathring{\MS}(G^i_{(-\infty, -1]}) \subset T^*M \times ((-\infty,-1] \times \bR_t) \times \bigcup_{\alpha=i}^{m-1} \{ (\sigma,\tau) \mid \sigma = \alpha \tau \}.
    \end{equation*}
    In particular, when $i = m$, $\mathring{\MS}(G^m_{(-\infty, -1]}) = \varnothing$,
    we know that $G^m_{(-\infty, -1]} \simeq 0$. Hence by \cref{lem:iterated-cone} we get
    \begin{equation*}
        F_{(-\infty, -1]} 
        \simeq 
        \ld F^0_{(-\infty, -1]} \to F^1_{(-\infty, -1]} \dots \to F^{m-2}_{(-\infty, -1]} \to F^{m-1}_{(-\infty, -1]} \rd.
    \end{equation*}
    
    Finally, we consider $F^i_{(-\infty, -1]}$ and conclude that it is of the form 
    \begin{equation*}
        F^i_{(-\infty, -1]} \simeq \pi^{-1}_iF^i_{-1}
    \end{equation*}
    for some $F^i_{-1} \in \cT(T^*M)$. 
    By microlocal cut-off lemma, $G^i_{(-\infty, -1]} \to G^{i-1}_{(-\infty, -1]}$ is an isomorphism on $T^*M \times ((-\infty,-1] \times \bR_t) \times \Int(\gamma_i)$. 
    Thus we can show that
    \begin{equation*}
        \mathring{\MS}(F^i_{(-\infty, -1]}) \subset \mathring{\MS}(G^{i-1}_{(-\infty, -1]}) \setminus \mathring{\MS}(G^{i}_{(-\infty, -1]}) \subset \{ (\sigma,\tau) \mid \sigma = i\sigma \}.
    \end{equation*}
    By \cite[Prop.~5.4.5]{KS90} the cohomology of $F^i_{(-\infty, -1]}$ is locally constant along the fiber of the projection $\pi_i \colon M \times (-\infty, -1] \times \bR \to M \times \bR, \, (x, s, t) \mapsto (x, s-it)$, which proves that $F^i_{(-\infty, -1]} \simeq \pi^{-1}_i \pi_{i*}F^i_{(-\infty, -1]}$. This implies the assertion.
    
    Then consider the assertion for $F_{[+1, +\infty)}$ and $F_{+1}$. For $j=0, 1,\dots, n$, we define a sequence of cones $\gamma'_0 \supset \gamma'_1 \supset \dots \supset \gamma'_n$ in $\bR^2$ and their dual cones $\gamma_0^{\prime*} \subset \gamma_1^{\prime*} \subset \dots \subset \gamma_{n}^{\prime*}$ in the dual space $\bR^2$ by
    \begin{equation*}
        \gamma'_j \coloneqq \lc (s, t) \relmid t \ge -\lb n - i - \frac{1}{2} \rb s \ge 0 \rc, \; \gamma_j^{\prime*} \coloneqq \lc (\sigma, \tau) \relmid \tau \ge 0, \lb n - i - \frac{1}{2} \rb \tau \ge \sigma \rc.
    \end{equation*}
    Then the same microlocal cut-off argument will imply the assertion.
\end{proof}

\begin{remark}
    We can also explain the microlocal cut-off argument using sheaf theoretic wrapping developed by Kuo~\cite[Thm.~1.2]{Kuo23} with suitable generalizations; see \cref{rem:wrap-quantization}. We again need the existence of the integration flow for non-compactly supported Hamiltonians, which requires the choice of a complete adapted metric on the non-compact manifold $T^*(M \times (-\infty, -1))$ or $T^*(M \times (+1, +\infty))$.
\end{remark}

\subsection{Shadow distance and interleaving distance}
In this subsection, we study the relation between the shadow distance of cobordisms and the interleaving distance of their sheaf quantizations.

Recall that $J=\bR_s$. 
Consider $F \in \cT(T^*(M \times J))$ that satisfies \cref{assumption:proj}. For $\Pi(F) = \overline{ p(\RS(F)) } = \overline{p(\rho (\MS(F) \cap \Omega_+))} \subset T^*J$, let $\cU(F)$ be the union of unbounded regions of $T^*J \setminus \Pi(F)$, and
\begin{equation*}
    A(F) \coloneqq T^*J \setminus \cU(F).
\end{equation*}
The shadow of $F$ is defined as 
\begin{equation*}
	\cS(F) 
	\coloneqq 
	\Area(A(F)) = \Area(T^*J \setminus \cU(F)),
\end{equation*}
where $\Area(A)$ is the Lebesgue measure of $A \subset \bR^2$. In other words, we define the shadow of a sheaf $F$ that satisfies \cref{assumption:proj} to be the shadow of the subset $\RS(F)$ as introduced in \cite{CS19}.
When $F_V \in \cT_V(T^*(M \times J))$ is the sheaf quantization of a Lagrangian cobordism $V$, the shadow $\cS(F_V)$ is by definition the shadow of the cobordism $V$ in \cite{CS19,BCS18}.

We need the following technical lemma to prove the result in this section.

\begin{lemma}[cf.~{\cite[Section~3.11]{CS19}}]\label{lem:deform-shadow}
    Let $A$ be a closed subset of $T^*J$ such that $A \cap T^*[-1, 1]$ is bounded, all connected components of $T^*J \setminus A$ have infinite area, and
    \begin{equation*}
        A \cap T^*(-\infty,-1] \subset (-\infty,-1] \times \bigcup_{i=0}^{m-1} \{i\}, \quad
        A \cap T^*[1,\infty) \subset [1,\infty) \times \bigcup_{j=0}^{n-1} \{j\}.
    \end{equation*}
    For any $\varepsilon>0$, there exists $T > 0$, a Hamiltonian isotopy $\phi^\varepsilon_z \colon T^*J \rightarrow T^*J$ supported in $J \times [-T,T]$
    and a subset $A_\varepsilon = \{(s, \sigma) \mid -1 \leq s \leq 1, 0 \leq \sigma \leq \sigma_+(s)\}$ bounded by continuous sections on $T^*[-1, 1]$ such that:
    \begin{enumerate}
    \renewcommand{\labelenumi}{$\mathrm{(\arabic{enumi})}$}
        \item $\phi^\varepsilon_1(A) \cap T^*[-1, 1] \subset A_\varepsilon$ and $\mathrm{Area}(A_\varepsilon) \leq \mathrm{Area}(A) + \varepsilon$ and
        \item $\phi^\varepsilon_z(A \setminus T^*[-1, 1]) = A \setminus T^*[-1, 1]$.
    \end{enumerate}
    In particular, the corresponding Hamiltonian function $H_z$ is supported in $J \times [-T, T]$.
\end{lemma}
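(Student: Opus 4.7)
The plan is to follow the approach of \cite[Section~3.11]{CS19}, constructing the Hamiltonian isotopy by explicit area-preserving deformations of the plane $T^*J \cong \bR^2$. First, since $A \cap T^*[-1, 1]$ is bounded, I would choose $T > 0$ so that $A \cap T^*[-1, 1] \subset J \times (-T+1, T-1)$; this will guarantee the support of $\phi^\varepsilon_z$ lies in $J \times [-T, T]$. By the outer regularity of Lebesgue measure, I would then approximate $A$ by an open set $U \supset A$ with $\mathrm{Area}(U) \le \mathrm{Area}(A) + \varepsilon/2$, chosen as a finite union of piecewise-smooth open regions attached to the anchor rays outside $T^*[-1, 1]$; since all components of $T^*J \setminus A$ are unbounded, we may arrange $U$ so that the same property still holds, and so that $U \setminus T^*[-1, 1]$ consists only of thin horizontal strips around the anchor rays.

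Next, I would construct a smooth Hamiltonian $H_z \colon T^*J \to \bR$ supported in $J \times [-T, T]$ whose flow $\phi^\varepsilon_z$ (i) fixes each anchor ray $(-\infty, -1] \times \{i\}$ and $[1, \infty) \times \{j\}$ pointwise, and (ii) maps $U \cap T^*[-1, 1]$ into a subgraph region $\{(s, \sigma) \mid -1 \le s \le 1,\, 0 \le \sigma \le \sigma_+(s)\}$ over $[-1, 1]$. Property (i) is arranged by choosing $H_z$ to vanish together with its gradient in a neighborhood of each anchor ray, so that the Hamiltonian vector field $X_{H_z} = (\partial_\sigma H_z, -\partial_s H_z)$ is zero there. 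Property (ii), which is the heart of the construction, is obtained by performing a sequence of area-preserving moves in each horizontal strip between consecutive anchor heights: vertical shears lifting any portion of $U$ that dips below $\sigma = 0$, horizontal reshufflings, and a final ``folding'' collapsing $U$ into a neighborhood of the graph of an upper envelope. The fact that every connected component of $T^*J \setminus A$ is unbounded, and hence that $A$ deformation-retracts inside $T^*[-1, 1]$ to a forest attached to the anchor points $\{(-1, i)\}$ and $\{(1, j)\}$, ensures that there is no topological obstruction to such a combing.

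Finally, defining $\sigma_+(s)$ by a slight smooth thickening of the upper envelope of $\phi^\varepsilon_1(U) \cap T^*[-1, 1]$ (so that $\sigma_+$ is continuous and the thickening contributes at most $\varepsilon/2$ to the area), the set $A_\varepsilon$ contains $\phi^\varepsilon_1(A) \cap T^*[-1, 1]$ by construction, and area-preservation of the symplectic isotopy yields $\mathrm{Area}(A_\varepsilon) \le \mathrm{Area}(U) + \varepsilon/2 \le \mathrm{Area}(A) + \varepsilon$. The main obstacle will be step (ii) above: producing an explicit Hamiltonian flow that simultaneously respects the pointwise-fixed anchor rays and sweeps every strip of $T^*[-1, 1]$ into a common subgraph region over $[-1, 1]$. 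This is a delicate but essentially two-dimensional symplectic surgery, and I expect to model it closely on Cornea--Shelukhin's construction in \cite{CS19}, where the role of our anchor rays is played by the asymptotic projections of the Lagrangian ends of a cobordism.
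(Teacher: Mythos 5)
Your proposal is correct and takes essentially the same route as the paper: approximate $A \cap T^*[-1,1]$ from the outside by a finite piecewise-smooth region of nearly equal area (the paper uses a finite union of squares, you use a piecewise-smooth open set), use the assumption that all complementary components of $A$ have infinite area to rule out topological obstructions, produce an area-preserving isotopy in the strip (pointwise fixing $A$ outside $T^*[-1,1]$) that compresses this region into a subgraph $A_\varepsilon$, and finish by smoothing. The only cosmetic differences are the $\varepsilon/2$ vs.\ $\varepsilon/3$ budgeting and that the paper makes explicit the requirement $\sigma_+(-1) \ge m$, $\sigma_+(+1) \ge n$ (to ensure the anchor points where $A$ meets $\{s=\pm1\}$ lie under the chosen section), which your ``upper envelope'' definition of $\sigma_+$ implicitly handles.
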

\begin{proof}
    Since $A \cap T^*[-1, 1]$ is a measurable subset, there exists a countable collection of squares $\{D_\alpha\}_{\alpha \in \bN}$ that covers $A$ and 
    \begin{equation*}
        \Area\lb\bigcup\nolimits_{\alpha \in \bN} D_\alpha\rb \leq \Area(A)+ \frac{\varepsilon}{3}.
    \end{equation*}
    Since $A \cap T^*[-1, 1]$ is a compact subset, we can moreover conclude that there exists a finite subcover $\{D_\alpha\}_{1 \leq \alpha \leq N} \subset \{D_\alpha\}_{\alpha \in \bN}$ such that
    \begin{equation*}
        \Area \lb \bigcup\nolimits_{1 \leq \alpha \leq N} D_\alpha \rb \leq \Area(A)+ \frac{\varepsilon}{3}.
    \end{equation*}
    Note that the union $\bigcup_{1 \leq \alpha \leq N} D_\alpha$ has a piecewise smooth boundary, and we can assume it to be simply connected.
    
    Consider a non-negative section $\sigma_+$ such that $\sigma_+(-1) \geq m, \sigma_+(+1) \geq n$ and the domain $A_\varepsilon = \{(s, \sigma) \mid -1 \leq s \leq 1, 0 \leq \sigma \leq \sigma_+(s) \}$ satisfies the area inequality
    \begin{equation*}
         \Area \lb \bigcup\nolimits_{1 \leq \alpha \leq N} D_\alpha \rb \leq \Area(A_{\varepsilon}) \leq \Area \lb \bigcup\nolimits_{1 \leq \alpha \leq N} D_\alpha \rb + \frac{\varepsilon}{3}.
    \end{equation*}
    Since all components $T^*J \setminus A$ have infinite area, we may assume that all components of $T^*J \setminus \lb A \cup \bigcup_{1 \leq \alpha \leq N} D_\alpha \rb$ have infinite area as well. Then we can find a piecewise smooth area preserving isotopy $\phi_z^\varepsilon \colon T^*J \to T^*J$ fixed on $A \setminus T^*[-1, 1]$ such that
    \begin{equation*}
        \phi_1^\varepsilon \lb \bigcup\nolimits_{1 \leq \alpha \leq N} D_\alpha \rb \subset A_{\varepsilon}.
    \end{equation*}
    Since piecewise smooth diffeomorphisms can be approximated by smooth diffeomorphisms, we thus finish the construction. 
    See \cref{fig:estimate} for a schematic picture of the proof.
\end{proof}

\begin{figure}[ht]
    \centering
    \includegraphics[width=0.95\textwidth]{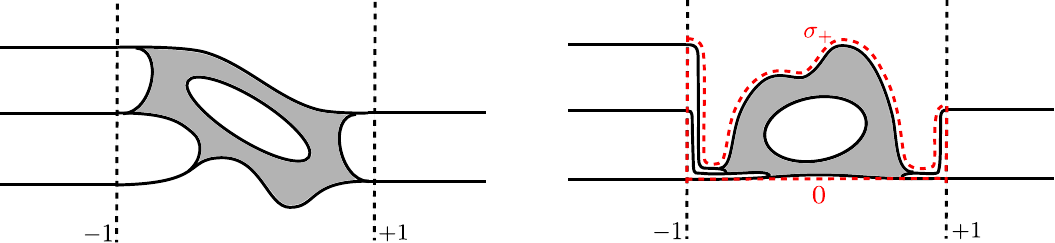}
    \caption{The schematic picture of the Hamiltonian isotopy in \cref{lem:deform-shadow} such that the Lagrangian shadow is bounded in between the sections $0$ and $\sigma_+ \colon [-1, 1] \to T^*[-1, 1]$.}
    \label{fig:estimate}
\end{figure}

Now we can prove the main results on the distance bound. First, we prove the distance bound when each positive and negative part has one end.

\begin{proposition}[One end for each part]\label{proposition:inequality_one_end}
    Suppose $F \in \cT(T^*(M \times J))$ satisfies
    \begin{equation*}
        \Pi(F) \cap T^*((-\infty, -1] \cup [1, +\infty)) = (-\infty, -1] \cup [1, +\infty) \times \{0\}.
    \end{equation*}
    Then there exists $c \in \bR$ such that 
    \begin{equation*}
        d'_{\cT(T^*M)}(F_{-1},T_c F_{+1}) \le \cS(F).
    \end{equation*}
    In particular, $F_{-1} \simeq F_{+1}$ in $\cT_\infty(T^*M)$.
\end{proposition}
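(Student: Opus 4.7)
My plan is to deform $F$ by the GKS kernel of a Hamiltonian isotopy of $T^*J$ that squeezes $A(F)$ into a thin region bounded by continuous sections on $T^*[-1, 1]$, and then invoke the homotopy interleaving estimate of Proposition~\ref{prp:htpy_wabisom} with $s$ as the homotopy parameter.

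Fix $\varepsilon > 0$. Applying Lemma~\ref{lem:deform-shadow} to $A = A(F)$, I obtain a compactly supported Hamiltonian isotopy $\phi^\varepsilon_z$ of $T^*J$, generated by $H^\varepsilon_z$, such that $\phi^\varepsilon_1(A(F)) \cap T^*[-1, 1] \subset A_\varepsilon = \{(s, \sigma) \mid -1 \le s \le 1, \, 0 \le \sigma \le \sigma_+(s)\}$ with $\Area(A_\varepsilon) \le \cS(F) + \varepsilon$; inspecting the construction, I may further arrange $\phi^\varepsilon_z$ to fix the two points $(\pm 1, 0)$ for all $z$. Conifying $\id_{T^*M} \times H^\varepsilon_z$ as in Subsection~\ref{subsec:Tamarkin} and applying Theorem~\ref{thm:GKS} together with Remark~\ref{rem:noncompact-gks}, I obtain a GKS kernel $K$ and set $F' \coloneqq K_1 \circ F \in \cD(M \times J)$. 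The transport property of $K$ yields $\Pi(F') \cap T^*[-1, 1] \subset A_\varepsilon$. Moreover, since the one-end hypothesis forces $\Pi(F)$ to meet each slice $\{s = \pm 1\}$ only at $\sigma = 0$, the lifted isotopy acts on the relevant microsupport over these slices purely by the constant $t$-translations $c^\varepsilon_\pm \coloneqq \int_0^1 H^\varepsilon_{z}(\pm 1, 0)\, dz$ computed from \eqref{eqn:hamilton-lift}, so $F'|_{s=\pm 1} \simeq T_{c^\varepsilon_\pm} F_{\pm 1}$.

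Viewing $F'|_{M \times [-1, 1] \times \bR_t}$ with $s \in [-1, 1]$ as the homotopy direction, the microsupport bound $0 \le \sigma \le \sigma_+(s) \tau$ verifies the hypothesis of Proposition~\ref{prp:htpy_wabisom} with $f \equiv 0$ and $g = \sigma_+$, yielding
\begin{equation}
    d'_{\cD(M)}(F'|_{s=-1}, F'|_{s=+1}) \le \int_{-1}^{1} \sigma_+(s) \, ds = \Area(A_\varepsilon) \le \cS(F) + \varepsilon.
\end{equation}
Combined with $F'|_{s=\pm 1} \simeq T_{c^\varepsilon_\pm} F_{\pm 1}$ and translation invariance of $d'_{\cD(M)}$, this gives $d'_{\cD(M)}(F_{-1}, T_{c_\varepsilon} F_{+1}) \le \cS(F) + \varepsilon$ with $c_\varepsilon \coloneqq c^\varepsilon_+ - c^\varepsilon_-$. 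The main obstacle I expect is producing a single $c \in \bR$ that realizes the sharp bound $d'_{\cD(M)}(F_{-1}, T_c F_{+1}) \le \cS(F)$; this should follow from a compactness argument on $\{c_\varepsilon\}$ as $\varepsilon \to 0$ (the quantity $c_\varepsilon$ admits a geometric interpretation as a symplectic action integral that can be controlled in $\varepsilon$) combined with lower semicontinuity of $d'_{\cD(M)}(F_{-1}, T_\bullet F_{+1})$ in $\bullet$. The isomorphism $F_{-1} \simeq F_{+1}$ in $\cT(M)$ then follows from Remark~\ref{remark:category_T} since $T_c$ induces an isomorphism in $\cT(M)$.
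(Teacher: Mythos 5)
Your proposal follows the same overall strategy as the paper: apply \cref{lem:deform-shadow} to squeeze $A(F)$, conify and GKS-quantize the isotopy via \cref{thm:GKS} and \cref{rem:noncompact-gks}, apply \cref{prp:htpy_wabisom} in the $s$-direction, and identify the boundary slices up to $t$-translation. The one place you diverge is in identifying the shift: you pin $(\pm 1,0)$ as fixed points of $\phi^\varepsilon_z$ and read off $c^\varepsilon_\pm$ from \eqref{eqn:hamilton-lift} at those points, whereas the paper argues more intrinsically---it uses that $\phi^\varepsilon_z$ preserves $A\setminus T^*[-1,1]$ as a set, concludes that $\RS(K^\varepsilon_1\circ F)$ agrees with $\RS(F)$ off $T^*(M\times[-1,1])$, infers from the fact that $\rho$ determines the conical lift only up to $t$-height that $i_{\pm 1}^{-1}(K^\varepsilon_1\circ F)$ is a vertical translate of $F_{\pm 1}$, and then computes the translate via Stokes' formula applied to \eqref{eqn:hamilton-lift} as the signed area enclosed by $0_{[-1,1]}$ and $(\phi^\varepsilon_1)^{-1}(0_{[-1,1]})$. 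Your pointwise computation of $c^\varepsilon_\pm$ is a valid shortcut once the fixed-point normalization is justified, but note that concluding $F'|_{s=\pm 1}\simeq T_{c^\varepsilon_\pm}F_{\pm 1}$ from the value of $u_1(\pm 1,0)$ alone needs a short argument that the kernel really acts as the translation kernel near $s=\pm 1$ (e.g.\ by additionally arranging $H^\varepsilon_z$ to be locally constant near $(\pm 1,0)$, or by invoking the $\RS$-rigidity step the paper uses); and the convergence of $c_\varepsilon$ as $\varepsilon\to 0$, which you flag as a remaining gap, is settled in the paper by identifying the limit as the signed area enclosed by the lower boundary of $A$ and the zero section. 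With these two points tightened, your argument matches the paper's.
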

\begin{proof}
    Set $A \coloneqq A(F)$.
    Consider the Hamiltonian isotopy $\phi^\varepsilon_z \colon T^*J \to T^*J$, $z \in [0, 1]$, supported in $J \times [-T,T]$ in \cref{lem:deform-shadow}. 
    Consider the GKS kernel $K^\varepsilon$ of the Hamiltonian isotopy $\id_{T^*M} \times \phi^\varepsilon_z$ on $T^*(M \times J)$ obtained by \cref{rem:noncompact-gks} and write $K^\varepsilon_z = K^\varepsilon|_{M \times \bR \times M \times \bR \times \{z\}}$. 
    We have
	\begin{equation*}
		\mathring{\MS}(K^\varepsilon_1 \circ F) \subset 
		T^*M \times 
		\{ (s,t,\sigma,\tau) \mid (s;\sigma/\tau) \in \phi_1^\varepsilon(A) \}.
	\end{equation*}
    By \cref{lem:deform-shadow}, there exist a non-negative section $\sigma_+ \colon J \to T^*J$ such that
	\begin{equation*}
        \{ (t,s;\tau,\sigma) \mid (s;\sigma/\tau) \in \phi_1^\varepsilon(A) \}
        \subset
        \{(t,s;\tau,\sigma) \mid 0 \le \sigma \le \sigma_+(s) \cdot \tau \},
    \end{equation*}
	and for $A_\varepsilon = \{(s, \sigma ) \mid 0 \leq \sigma \leq \sigma_+(s) \}$, we have $\Area(A_\varepsilon) \leq \Area(A) + \varepsilon$. Thus, applying \cref{prp:htpy_wabisom}, we obtain that
    \begin{equation}\label{eq:distance-oneend}
        d'_{\cT(T^*M)}(i_{-1}^{-1}(K^\varepsilon_1 \circ F), i_{1}^{-1}(K^\varepsilon_1 \circ F)) \leq \Area(A_\varepsilon) \leq \Area(A) + \varepsilon.
    \end{equation}
	As $\phi^\varepsilon_z(A \setminus T^*[-1, 1]) = A \setminus T^*[-1, 1]$, we can find that the reduced microsupports satisfy
     \begin{equation*}
	    \RS(K^\varepsilon_z \circ F) \setminus T^*(M \times [-1, 1]) = \RS(F) \setminus T^*(M \times [-1, 1]).
	\end{equation*}
    As the projection by $\rho$ determines the conical Lagrangian up to height in $\bR_t$, this implies that $i_{\pm 1}^{-1}(K^\varepsilon_z \circ F)$ are isomorphic to $F_{\pm 1}$ up to some vertical translation. Let $c_\varepsilon$ be the area with signs enclosed by the curve $0_{[-1,1]}$ and ${(\phi^\varepsilon_1)^{-1}}(0_{[-1,1]})$. Then by applying Stokes' formula to \eqref{eqn:hamilton-lift},
	\begin{equation*}
	    i_{-1}^{-1}(K^\varepsilon_1 \circ F) \simeq F_{-1}, \quad i_{+1}^{-1}(K^\varepsilon_1 \circ F) \simeq T_{c_\varepsilon} F_{+1}.
	\end{equation*}
    Finally, since \cref{eq:distance-oneend} holds for any $\varepsilon \in \bR_{>0}$ and $c_\varepsilon$ converges to the signed area enclosed by the lower boundary of $A$ and the zero section as shown in \cref{fig:single}, we obtain the result.
\begin{figure}[ht]
    \centering
    \includegraphics[width=0.85\textwidth]{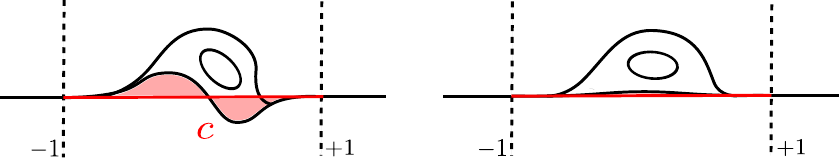}
    \caption{The shift $c$ as the signed area of the red region in \cref{proposition:inequality_one_end}.}
    \label{fig:single}
\end{figure}
\end{proof}

\begin{remark}
    From the proof, one can see that the choice $c \in \bR$ is not unique. In fact, letting $c_-$ be the area with signs enclosed by the lower boundary of $A$ and the zero section and $c_+$ be the area with signs enclosed by the upper boundary of $A$ and the zero section, one can prove the result for any $c \in [c_-, c_+]$ where $c_+ - c_- = \cS(F)$.
\end{remark}

Now we state the distance bound in the general case.

\begin{theorem}[General case]\label{theorem:inequality_multiple_ends}
    Suppose $F \in \cT(T^*(M \times J))$ satisfies \cref{assumption:proj}, and the iterated cone decompositions for $F_{-1}$ and $F_{+1}$ are given as in \cref{theorem:iterated_cone}.
    There exist increasing sequences of real numbers $\{c_i\}_{i=0,\dots,m-1}, \{c'_j\}_{j=0,\dots,n-1}$ such that 
    \begin{equation*}
        d'_{\cT(T^*M)}(\tilde{F}_{-1},\tilde{F}_{+1}) \le \cS(F),
    \end{equation*}
    where 
    \begin{align*}
        \tilde{F}_{-1} & \coloneqq 
        \ld F_{-1}^0 \to T_{c_1}F_{-1}^1 \to \dots \to T_{c_{m-2}}F_{-1}^{m-2} \to T_{c_{m-1}}F_{-1}^{m-1} \rd, \\ 
        \tilde{F}_{+1} & \coloneqq 
        \ld T_{-c'_{n-1}} F_{+1}^{n-1} \to T_{-c'_{n-2}}F_{+1}^{n-2} \to \dots \to T_{-c'_{1}}F_{+1}^{1} \to T_{-c'_{0}}F_{+1}^{0} \rd. 
    \end{align*}
    In particular, one has $\tilde{F}_{-1} \simeq \tilde{F}_{+1}$ in $\cT_\infty(T^*M)$.
\end{theorem}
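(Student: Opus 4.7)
The plan is to extend the argument of \cref{proposition:inequality_one_end} to multiple ends by tracking the iterated cone decomposition of \cref{theorem:iterated_cone} through the Hamiltonian deformation. I would start by applying \cref{lem:deform-shadow} to $A \coloneqq A(F) \subset T^*J$ to produce, for each $\varepsilon > 0$, a Hamiltonian isotopy $\phi^\varepsilon_z$ on $T^*J$ supported in $J \times [-T, T]$ together with a set $A_\varepsilon = \{(s, \sigma) \mid -1 \le s \le 1,\, 0 \le \sigma \le \sigma_+(s)\}$ satisfying $\phi^\varepsilon_1(A) \cap T^*[-1, 1] \subset A_\varepsilon$ and $\Area(A_\varepsilon) \le \cS(F) + \varepsilon$. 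Extending to $\id_{T^*M} \times \phi^\varepsilon_z$ via \cref{rem:noncompact-gks}, the GKS kernel $K^\varepsilon$ sends $F$ to a sheaf whose microsupport over $[-1, 1]$ lies in $\{0 \le \sigma \le \sigma_+(s)\tau\}$; then \cref{prp:htpy_wabisom} (with $f = 0$, $g = \sigma_+$) yields
\[
d'_{\cD(M)}\bigl(i_{-1}^{-1}(K^\varepsilon_1 \circ F),\, i_{+1}^{-1}(K^\varepsilon_1 \circ F)\bigr) \le \Area(A_\varepsilon) \le \cS(F) + \varepsilon.
\]

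The next step is to identify these restrictions with shifted iterated cone decompositions. Since $K^\varepsilon_1$ is an exact equivalence on $\cD(M \times J)$ and $\phi^\varepsilon_z$ preserves each connected ray of $A \setminus T^*[-1, 1]$ setwise (by continuity of the isotopy from the identity and the fact that distinct rays are distinct connected components), the sheaf $K^\varepsilon_1 \circ F$ still satisfies \cref{assumption:proj}. Applying \cref{theorem:iterated_cone} to $K^\varepsilon_1 \circ F$, I obtain iterated cone decompositions at $s = \pm 1$ whose pieces take the form $T_{c_i^\varepsilon} G^i_{-1}$ and $T_{c_j'^\varepsilon} G^j_{+1}$. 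The identification $G^i_{-1} \simeq F^i_{-1}$ (and analogously for $G^j_{+1}$) follows because the microlocal cut-off cones $\gamma_i^*$ used in the construction of \cref{theorem:iterated_cone} are fixed under $\phi^\varepsilon_z$, and the GKS functor preserves the microlocal structure along the line $\sigma = i\tau$. The shift $c_i^\varepsilon$ is computed from \eqref{eqn:hamilton-lift} as the $t$-translation picked up along the $i$-th ray, which by Stokes equals the signed area enclosed between the level-$i$ component and its image under $\phi^\varepsilon_1$ in $T^*[-1, 1]$.

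Finally, the shifts $c_i^\varepsilon, c_j'^\varepsilon$ remain bounded as $\varepsilon \to 0$ by the area estimate. Passing to a convergent subsequence and absorbing an overall $t$-translation to normalize $c_0 = 0$ in $\tilde F_{-1}$, one obtains the desired constants $c_i, c'_j$ and the inequality $d'_{\cD(M)}(\tilde F_{-1}, \tilde F_{+1}) \le \cS(F)$, which implies $\tilde F_{-1} \simeq \tilde F_{+1}$ in $\cT(M)$ by \cref{remark:category_T}.

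The hard part will be the identification step: verifying that the pieces of the cone decomposition of $K^\varepsilon_1 \circ F$ are precisely shifts of the original pieces $F^i_{-1}$ (rather than some a priori different sheaves $G^i_{-1}$), with shifts $c_i^\varepsilon$ computable via Stokes. Conceptually this follows because $\phi^\varepsilon_z$ preserves each level line setwise and therefore acts on sheaves with microsupport along $\sigma = i\tau$ as a pure $t$-translation; however, one must check compatibility with the higher homotopies of the twisted complex structure on the iterated cone decomposition, which follows from functoriality of $K^\varepsilon_1$.
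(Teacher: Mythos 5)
Your proposal follows essentially the same route as the paper's proof: apply \cref{lem:deform-shadow} to flatten the shadow, push $F$ through the GKS kernel of $\id_{T^*M}\times\phi_z^\varepsilon$, invoke \cref{prp:htpy_wabisom} for the distance bound, apply \cref{theorem:iterated_cone} to the deformed sheaf, identify the resulting pieces with $t$-shifts of the original $F^i_{\pm 1}$ via the fact that $\phi^\varepsilon_z$ fixes $A\setminus T^*[-1,1]$, compute the shifts from \eqref{eqn:hamilton-lift} by Stokes, and pass to $\varepsilon\to 0$. The paper carries out the identification step a bit more concretely than you sketch --- it applies \cref{theorem:iterated_cone} to the full family $K^\varepsilon\circ F$ over $M\times I$ and then restricts along $u\in I$, using local constancy along $I$ plus the reduced-microsupport match $\RS(F^j_{\pm 1,u})=\RS(F^j_{\pm 1})$ to pin down that each piece deforms only by a vertical translation --- rather than appealing directly to ``functoriality of $K^\varepsilon_1$,'' but this is the same idea made precise.
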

\begin{proof}
    Set $A \coloneqq A(F)$. Consider the Hamiltonian isotopy $\phi^\varepsilon_z \colon T^*J \to T^*J$ in \cref{lem:deform-shadow} and the GKS kernel $\cK^\varepsilon$ of $\id_{T^*M} \times \phi^\varepsilon_z$ on $T^*(M \times J)$ obtained by \cref{rem:noncompact-gks}. Again, by \cref{lem:deform-shadow} and \cref{prp:htpy_wabisom}, we have
    \begin{equation}\label{eq:distance-multiple-ends}
        d'_{\cT(T^*M)}(i_{-1}^{-1}(K^\varepsilon_1 \circ {F}), i_1^{-1}(K^\varepsilon_1 \circ {F})) \leq \Area(A_\varepsilon) \leq \Area(A) + \varepsilon.
    \end{equation}
    As $\phi^\varepsilon_z(A \setminus T^*[-1, 1]) = A \setminus T^*[-1, 1]$, we can find that the reduced microsupports satisfy
    \begin{equation*}
	    \RS(K^\varepsilon_z \circ F) \setminus T^*(M \times [-1, 1]) = \RS(F) \setminus T^*(M \times [-1, 1]).
    \end{equation*}
    Applying \cref{theorem:iterated_cone} to the sheaf $K^\varepsilon \circ F$, we can write down the iterated cone decompositions 
    \begin{align*}
        i_{-1}^{-1} (K^\varepsilon \circ F) 
        & \simeq \ld \pi_0^{-1}\overline{F}_{-}^0 \to \pi_1^{-1} \overline{F}_{-1}^1 \to \dots \to \pi_{m-2}^{-1}\overline{F}_{-1}^{m-2} \to \pi_{m-1}^{-1}\overline{F}_{-1}^{m-1} \rd, \\ 
        i_{+1}^{-1} (K^\varepsilon \circ F) 
        & \simeq \ld \pi_{n-1}^{-1}\overline{F}_{+1}^{n-1} \to \pi_{n-2}^{-1} \overline{F}_{+1}^{n-2} \to \dots \to \pi_{1}^{-1}\overline{F}_{+1}^{1} \to \pi_{0}^{-1}\overline{F}_{+1}^{0} \rd. 
    \end{align*}
    Write $F_{\pm 1, u}^{j} = \overline{F}_{\pm 1}^j|_{M \times \bR_t \times \{u\}}$. Then the reduced microsupport estimate
    \begin{equation*}
        \RS({F}_{\pm 1,u}^{j}) = \RS(F_{\pm 1}^j) \quad (0 \leq j \leq m-1 \text{ or } n-1)
    \end{equation*}
    implies that $F_{\pm 1, 1}^{j}$ are isomorphic to $F_{\pm 1}^j$ up to vertical shifts ($0 \leq j \leq m-1 \text{ or } n-1$). 
    We determine the vertical shifts for all the components of $F_{-1}^i$, $0 \leq i \leq m-1$, and $F_{+1}^j$, $0 \leq j \leq n-1$ as follows. 
    Write $\ell_{\pm,i} = \{(\pm 1, \sigma) \mid 0 \leq \sigma \leq i\}$. 
    Let $c_{i,\varepsilon}$ be the area of the region enclosed by $\ell_{-,i}$ and $(\phi^\varepsilon)^{-1}(\ell_{-,i})$, and $c'_{j,\varepsilon}$ be the area of the region enclosed by $0_{[-1, 1]} \cup \ell_{+,j}$ and $(\phi^\varepsilon)^{-1}(0_{[-1, 1]} \cup \ell_{+,j})$. 
    Then by applying Stokes' formula to \eqref{eqn:hamilton-lift},
    \begin{equation*}
        F_{-1,1}^i \simeq T_{c_{i,\varepsilon}}F_{-1}^i, \quad F_{+1,1}^j \simeq T_{-c'_{j,\varepsilon}}F_{+1}^j.
    \end{equation*}
    The iterated cone decomposition can be written as 
    \begin{align*}
        i_{-1}^{-1} (K^\varepsilon_1 \circ F) 
        & \simeq \ld F_{-1}^0 \to T_{c_1}F_{-1}^1 \to \dots \to T_{c_{m-2}}F_{-1}^{m-2} \to T_{c_{m-1}}F_{-1}^{m-1} \rd, \\
        i_{+1}^{-1} (K^\varepsilon_1 \circ F) 
        & \simeq \ld T_{-c'_{n-1}} F_{+1}^{n-1} \to T_{-c'_{n-2}}F_{+1}^{n-2} \to \dots \to T_{-c'_{1}}F_{+1}^{1} \to T_{-c'_{0}}F_{+1}^{0} \rd. 
    \end{align*}
    Since \cref{eq:distance-multiple-ends} holds for any $\varepsilon \in \bR_{>0}$, $c_{i,\varepsilon}$ converges to the area enclosed by the boundary of $A$ and $\ell_{-,i}$, and $c_{j,\varepsilon}'$ converges to the signed area enclosed by the lower boundary of $A$ and the zero section plus the area enclosed by the boundary of $A$ and $\ell_{+,j}$ as shown in \cref{fig:multi}, we obtain the result.
\end{proof}

\begin{figure}[htb]
    \centering
    \includegraphics[width=0.95\textwidth]{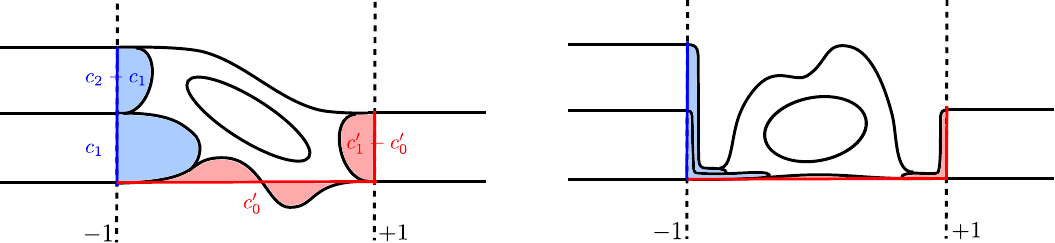}
    \caption{The shifts $c_i$ and $c_j'$ ($i = 1, \dots, m-1$, $j = 0, 1, \cdots, n-1$) in \cref{theorem:inequality_multiple_ends}.}
    \label{fig:multi}
\end{figure}

\subsection{Application to Lagrangian intersection}\label{subsec:lagrangian-intersection}

In this subsection, we apply the distance bound in the previous section to Lagrangian intersection problems. 
Throughout this subsection, we assume that $M$ is compact and $\bfk=\bF_2$,  meaning that all the sheaves are defined over $\bfk = \bF_2$.

The key lemma that relates microlocal theory of sheaves and Lagrangian intersection points is as follows. See for example \cite[Lem.~6.14]{Gu12}, \cite[Lem.~4.9, Prop.~4.10, and Thm.~4.14]{Ike19}, \cite[Lem.~5.19 and Prop.~5.20]{AISQ} and \cite[Cor.~4.7 and Prop.~6.1]{li2021estimating}. 
Recall that $A$ and $B$ intersect cleanly if they are smooth submanifolds in a neighborhood of their intersection and $T_p(A \cap B) = T_pA \cap T_p B$ for any $p \in A \cap B$.
Let $\pi \colon M \times \bR_t \to \bR_t$ be the projection map.
This is the only place in the subsection that we need $\bfk=\bF_2$.

\begin{lemma}[{\cite[Lem.~4.9, Prop.~4.10, and Thm.~4.14]{Ike19}}]\label{lem:ike-intersection}
    Let $N, L \subset T^*M$ be exact subsets and $\Lambda_N, \Lambda_L \subset T^*(M \times \bR_t)$ be their conifications. 
    Suppose $\Lambda_N$ and $T_{a}(\Lambda_L)$ intersect cleanly along $C$ and are smooth conical Lagrangians in a neighborhood of $C$. 
    Let $F_N \in \cT_{N}(T^*M)$ and $F_L \in \cT_{L}(T^*M)$ be simple sheaves along $\Lambda_N$ and $\Lambda_L$ in a neighborhood of $C$. 
    Then
    \begin{equation*}
        \Gamma_{[a,+\infty)}(\pi_*\cHom^\star(F_N, F_L))_a \simeq \Gamma(C; (\bF_2)_C)[-\mu(C)]
    \end{equation*}
    for some integer $\mu(C) \in \bZ$ determined by the Maslov index. 
\end{lemma}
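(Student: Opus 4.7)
The strategy is a three-step microlocal computation: translate the stalk into a microlocal invariant localized at $C$, put the pair $(\Lambda_N, T_a\Lambda_L)$ into a standard form via a quantized contact transformation, and compute in this model.

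First, I would identify the left-hand side with a microstalk. By \cref{prp:morD}, the global sections of $\Gamma_{[a,+\infty)}(\pi_* \cHom^\star(F_N, F_L))$ compute $\Hom_{\cD(M)}(F_N, T_a F_L)$, so the stalk at $a$ captures the contribution switched on precisely at translation value $a$. The microsupport of $\cHom^\star(F_N, F_L)$ is controlled by the fibrewise difference of $\MS(F_L)$ and $\MS(F_N)$, so this stalk is supported at positive covectors lying over $\Lambda_N \cap T_a \Lambda_L$ (hence over $C$), and it can be identified with a Kashiwara--Schapira microlocal hom along this intersection.

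Second, using the GKS kernel of \cref{thm:GKS}, I would apply a homogeneous contact transformation bringing the cleanly intersecting pair $(\Lambda_N, T_a\Lambda_L)$ locally onto the model of two conormal bundles of cleanly intersecting submanifolds $M_N, M_L \subset M \times \bR_t$ whose intersection projects diffeomorphically to $C$. By the classification of simple objects in the Kashiwara--Schapira stack (\cref{thm:KSstack}), the transformed $F_N$ and $T_a F_L$ become shifted, local-system twisted constant sheaves on $M_N$ and $M_L$. In this model the microlocal hom along the clean intersection is classical: it equals the orientation line of the excess tangent bundle along $M_N \cap M_L$, placed in the cohomological degree prescribed by the transverse codimension.

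Third, transporting the model computation back along the contact transformation and pushing it down to $C$, the local orientation data combine with the local systems on the Lagrangians into a rank one local system $\omega_C$ on $C$, while the cohomological shifts combine with the Maslov index of the path of Lagrangian frames relating the model to $(T\Lambda_N, T(T_a\Lambda_L))$ into an integer $\mu(C)$. Restricting to sections supported in $\{t \ge a\}$ then extracts the stalk and yields the claimed $\Gamma(C;\omega_C)[-\mu(C)]$.

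The main obstacle is the Maslov bookkeeping. Making $\mu(C)$ globally well-defined requires vanishing of the Maslov class of the Lagrangian pair along $C$, and making $\omega_C$ a genuine (rather than twisted) local system requires the corresponding relative Stiefel--Whitney vanishing; both are precisely what is built into the simple-sheaf data via \cref{thm:KSstack}. A secondary subtlety is to verify that the local contact transformations can be chosen compatibly along $C$ so that the pointwise identifications glue into global invariants on $C$.
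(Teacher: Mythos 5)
This lemma is cited from Ike~\cite{Ike19} (together with \cite{Gu12,AISQ,li2021estimating}) and is not reproved in this paper, so there is no in-paper proof to compare with; I will assess your reconstruction on its own merits. Your three-step outline --- identify the stalk with a microlocal Hom supported along the clean intersection, reduce to a conormal model by a quantized contact transformation, compute in the model and transport back, absorbing the degree shift into a Maslov index $\mu(C)$ and the orientation data into a rank-one local system $\omega_C$ --- is the standard strategy and does match the cited arguments in spirit.

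Two points deserve attention. First, \cref{thm:GKS} quantizes \emph{global} homogeneous Hamiltonian isotopies starting from the identity; the local reduction of a cleanly intersecting pair of conic Lagrangians to a pair of conormals is done with the local quantized contact transformations of Kashiwara--Schapira \cite[Ch.~7]{KS90}, not with GKS kernels. Invoking GKS here is a technical mismatch. Second, and more substantively, the identification $\Gamma_{[a,+\infty)}(\pi_*\cHom^\star(F_N, F_L))_a \simeq \Gamma(C;\mu hom(F_N, T_a F_L)|_C)$ (equivalently, of the cone $\Cone\big(\Hom(F_N,T_{a-\varepsilon}F_L)\to\Hom(F_N,T_aF_L)\big)$ with the microlocal Hom along $\Lambda_N\cap T_a\Lambda_L$) is the actual content of Ike's Prop.~4.9 and Thm.~4.14 and is asserted rather than argued in your first step. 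This requires the Tamarkin microsupport estimate for $\cHom^\star(F_N,F_L)$ (the ``fibrewise difference'' you mention), constructibility of the pushforward along $\pi$, and the computation of its stalk jumps via $\mu hom$; none of these are automatic. The clean (rather than transverse) case also uses the excess bundle contribution, which you correctly flag but do not derive. So the proposal is the right skeleton, but the core step that makes the statement nontrivial is left as a black box.
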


We first consider the case where a cobordism has one Lagrangian for each positive and negative direction.

\begin{proposition}\label{prop:one_end}
    Let $V \subset T^*(M \times J)$ be an exact Lagrangian cobordism with conical ends between $L$ and $L'$. 
    Let $N$ be a Lagrangian and assume that the Lagrangians are so that $N$ intersects $L$ and $L'$ cleanly. 
    Then, there exists $\delta>0$ depending on $L'$ and $N$ such that 
    if $\cS(V) < \delta$, one has
    \begin{equation*}
        \sum_{n} \dim H^n(N \cap L';\bF_2) \ge \sum_{n} \dim H^n(N \cap L;\bF_2).
    \end{equation*}
    In particular, if $N$ intersects $L$ and $L'$ transversally, then there exists $\delta>0$ depending on $L'$ and $N$ such that if $\cS(V) < \delta$, one has     
    \begin{equation*}
        \#(N \cap L') \ge \#(N \cap L).
    \end{equation*}
\end{proposition}

In the proof, we consider $\cHom^\star(F,G)$ for simple sheaf quantizations $F,G \in \cT(T^*M)$.  
As mentioned before, we project $\cHom^\star$ to the left orthogonal ${}^\perp\Sh_{\{\tau \le 0\}}$.
If $\pi_* \cHom(F,G) \in {}^\perp\Sh_{\{\tau \le 0\}}(\bR) \simeq \cT(\pt)$ is constructible, it can be decomposed into interval modules as follows (see \cite[Cor.~7.3]{Gu16} and \cite[Thm.~1.7]{KS18persistent}):
\begin{equation*}
    \pi_* \cHom^\star(F,G) \simeq \bigoplus_{\alpha \in A} \bfk_{I_\alpha}[k_\alpha],
\end{equation*}
where $I_\alpha$ is an interval of the form $[a,a')$.
Each interval $I_\alpha$ is called a bar of $\pi_*\cHom^\star(F,G)$.
It is clear that 
$\Gamma_{[a,+\infty)}(\pi_* \cHom^\star(F, G))_a \neq 0$
only when $a$ is the end point of some bar $I_\alpha$.

\begin{proof}[Proof of \cref{prop:one_end}]
    Let $F_V \in \cT(T^*(M \times J))$ be a simple sheaf quantization of $V$ constructed in \cref{thm:sheaf-quan-cob-filling}. 
    Set $F_L \coloneqq (F_V)_{-1}$ and $F_{L'} \coloneqq (F_V)_{+1}$, which are simple sheaf quantizations of $L$ and $L'$.
    As already mention before, $\cS(F_V)$ coincides with the Lagrangian shadow $\cS(V)$ of $V$.
    Moreover, let $F_N$ be a simple sheaf quantization of $N$.
    
    We take primitives $g$ and $f$ of $N$ and $L$ respectively. Let
    \begin{equation*}
        \cA(p) \coloneqq g(p) - f(p), \; p \in N \cap L.
    \end{equation*}
    The end point of bars of $\pi_* \cHom^\star(F_N,F_{L})$ is in $\{ \cA(p) \mid p \in N \cap L \}$.
    Hence, we can consider
    \begin{equation*}
        \delta \coloneqq \min \{ \cA(p) - \cA(p') > 0 \mid p \neq p' \in N \cap L\},
    \end{equation*}
    being a lower bound on the length of all the bars of $\pi_* \cHom^\star(F_N,F_{L})$.
    By \cref{proposition:inequality_one_end},
    \begin{equation*}
        d'_{\cT(T^*M)}(F_L, T_cF_{L'})=d'_{\cT(T^*M)}(F_{-1}, T_c F_{+1}) \le \cS(F) < \delta.
    \end{equation*}
    Hence, for $\cS(F_V) = \delta'<\delta$, by \cref{def:defab}, we find that there exists $\delta'' \in [0, \delta']$ such that $\tau_{\delta'}(F_{L})$ factors as 
    \begin{equation*}
        \pi_* \cHom^\star(F_N,F_L) \to 
        \pi_* \cHom^\star(F_N,T_{c+\delta''}F_{L'}) \to 
        \pi_* \cHom^\star(F_N,T_{\delta'} F_L),
    \end{equation*}
    which implies that the number of bars of the mid term $\pi_*\cHom^\star(F_N, F_{L'})$ 
    is at least that of $\pi_* \cHom^\star(F_N,F_{L})$.
    By \cref{lem:ike-intersection}, the number of end points of bars of $\pi_* \cHom^\star(F_N,F_{L})$ is equal to $\sum_{n} \dim H^n(N \cap L;\bfk)$ and the number of end points of bars of $\pi_* \cHom^\star(F_N,F_{L'})$ is $\sum_{n} \dim H^n(N \cap L';\bfk)$. Hence we obtain the result.
\end{proof}

Now let us consider the case where a cobordism has multiple ends on the negative side. This is the most technical result in this paper and will occupy the rest of the paper. The main difficulty is to understand the relationship between the filtrations on $\bR$ and iterated cone decompositions in dg-categories, where we need to estimate not only the energy of the differential but also the energy of all higher homotopies.

\begin{theorem}[{cf.~\cite[Thm.~C]{BCS18}}]\label{theorem:intersection}
    Let $V \subset T^*(M \times J)$ be an exact Lagrangian cobordism with conical ends between $(L_0, \dots, L_{m-1})$ and $L'$. 
    Moreover, let $N$ be a Lagrangian and assume that $N$ intersects $L_i$ cleanly for $i=0,\dots,m-1$, $N$ intersects $L'$ cleanly, and there is no point in the intersection of the triple $(N,L_i,L_j)$ for $i \neq j$. 
    There exists a constant $\delta>0$ that depends only on $L_0, \dots, L_{m-1}$ and $N$ satisfying the following: if $\cS(V) < \delta$ then 
    \begin{equation*}
        \sum_{n \in \bZ} \dim H^n(N \cap L';\bF_2) \ge \sum_{i=0}^{m-1} \sum_{n \in \bZ} \dim H^n(N \cap L_i; \bF_2).
    \end{equation*}
\end{theorem}

\begin{corollary}
    In the situation of \cref{theorem:intersection}, assume furthermore that $N$ intersects $L_i$ transversally for $i=0,\dots,m-1$, $N$ intersects $L'$ transversally, and there is no point in the intersection of the triple $(N,L_i,L_j)$ for $i \neq j$. 
    Then, there exists a constant $\delta>0$ that depends only on $L_0, \dots, L_{m-1}$ and $N$ such that if $\cS(V) < \delta$, one has
    \begin{equation*}
        \#(N \cap L') \ge \sum_{i=0}^{m-1} \#(N \cap L_i).
    \end{equation*}
\end{corollary}

Let $g$ be the primitive of $N$, $f_i$ be the primitives of $L_i'$, and $\cA_i(p_i) \coloneqq g(p_i) - f_i(p_i),\, p_i \in N \cap L_i$. One can easily show that, by \cref{theorem:inequality_multiple_ends} and \cref{lem:ike-intersection}, we can take
\begin{equation}\label{eq:delta_0}
    \delta \coloneqq \min\{\cA_j(p_j) - \cA_i(p_i) \mid 0 \leq i \leq j \leq m-1, p_s \in N \cap L_s\}
\end{equation}
to ensure that the theorem holds. However, this bound $\delta > 0$ depends not only on the underlying Lagrangian submanifolds $N, L_0, \dots, L_{m-1}$, but on the primitives $f_0, \dots, f_{m-1}$ and $c_1, \dots, c_{m-1}$ as well. Therefore, we will find a lower bound for the lengths of the bars of
\begin{equation}
    \pi_* \cHom^\star(F_{N}, [F_{L_0} \to T_{c_1}F_{L_1} \to \dots \to T_{c_{m-1}}F_{L_{m-1}}]),
\end{equation}
among all possible shifts that give different choices of primitives.

Before going into the proof, we fix our convention and terminology.
\begin{enumerate}
    \item[(a)] For a chain map between chain complexes $\varphi = (\varphi^n)_n \colon X_0 \to X_1$, the mapping cone $\Cone(\varphi)$ is defined as 
    \begin{equation}\label{eqn:cone-definition}
        \Cone(\varphi)^n = X_1^{n} \oplus X_0^{n+1}, 
        \quad 
        d_{\Cone(\varphi)}^n = 
        \mqty[d_{X_1}^{n} & \varphi^n \\ 0 & d_{X_0[1]}^{n}],
    \end{equation}
    where $d_{X[1]}^n=-d_X^{n+1}$. Thus, an iterated mapping cone or a twisted complex \cite{BondalKapranov} $[X_0 \to X_1 \to \dots \to X_{m-1}]$ can be written as a complex
    \begin{equation}
    [X_0 \to X_1 \to \dots \to X_{m-1}]^n = X_{m-1}^n \oplus \dots \oplus X_1^{n+m-2} \oplus X_0^{n+m-1},
    \end{equation}
    with differential defined as
    \begin{equation}\label{eqn:twist-complex-definition}
    \mqty[
        d_{m-1} & \varphi_{m-2,m-1} & \varphi_{m-3,m-1} & \cdots & \cdots & \varphi_{0,m-1} \\
        0 & d_{m-2}[1] & \varphi_{m-3,m-2}[1] &  \cdots & \cdots & \varphi_{0,m-2}[1] \\
        \vdots & 0 & d_{m-3}[2] & \cdots & \cdots & \varphi_{0,m-3}[2] \\  
        \vdots & \vdots & \ddots & \ddots & \vdots & \vdots \\ 
        0 & 0 & \cdots & 0 &  d_1[m-2] & \varphi_{0,1}[m-2] \\
        0 & 0 & \cdots & \cdots & 0 & d_{0}[m-1]
    ].
    \end{equation}
    They satisfy the relation
    \begin{equation}\label{eqn:twist-complex-null-homotopy}
        \varphi_{i,j} d_i + \sum_{i < k < j} \varphi_{k,j} \varphi_{i,k} = d_j \varphi_{i,j}
    \end{equation}
    for any $1 \le i < j \le m-1$.
    This means that $\sum_{i < k < j} \varphi_{k,j} \varphi_{i,k}$ is a chain map $X_i \to X_j[i-j+1]$ and $\varphi_{i,j}$ is a null homotopy of the chain map. 
    \item[(b)] Let $\varphi \colon H_0 \to H_1$ be a morphism of constructible objects in ${}^\perp \Sh_{\{\tau \le 0\}}(\bR) \simeq \cT(\pt)$.
    First we can decompose each $H_i$ into interval modules, and then describe them as complexes of the direct sums of shifted intervals of the form $\bfk_{[a,\infty)}$:
    \begin{equation}\label{eqn:decompose-infinite-interval}
        H_0 \simeq \lb \bigoplus_{p} \bfk_{[a_{p},\infty)}[k_p], d_0 \rb, \quad 
        H_1 \simeq \lb \bigoplus_{q} \bfk_{[a_{q},\infty)}[l_q], d_1 \rb. 
    \end{equation}
    Assume both of the direct sums in \eqref{eqn:decompose-infinite-interval} are finite.
    In this case, the morphism $\varphi$ can be represented as a chain map between the above complexes of sheaves, whose derivatives are compatible with the interval decompositions. 
    One can check this by computing the morphisms between interval modules (see, for example, \cite{berkouk2021derived}).
    We say that the energy of $d_i$ is at least $e>0$ if every non-trivial component of $d_i$ comes from a morphism $\bfk_{[a,\infty)} \to \bfk_{[a',\infty)}$ with $a'-a \ge e$.
    Similarly, we say that the energy of the representing chain map $\varphi$ is at least $e\geq0$ if every non-trivial component of $\varphi$ comes from a morphism $\bfk_{[a,\infty)} \to \bfk_{[a',\infty)}$ with $a'-a \ge e$.
    By \eqref{eqn:cone-definition}, we see that if the energy of $d_0$, $d_1$, and $\varphi$ are at least $e>0$, then the length of the shortest bar of the mapping cone $\Cone(\varphi)$ is at least $e$. 
    
    \item[(c)] For a morphism $\psi \colon G \to H$ and $b \ge 0$, we say a morphism $\tilde{\psi}_b \colon G \to T_{-b}H$ is a \emph{lift} of $\psi \colon G \to H$ if the following diagram commutes up to homotopy:
    \begin{equation}
    \begin{aligned}
    \xymatrix{
        & T_{-b}H \ar[d]^-{\tau_{-b,0}(H)} \\
        G \ar[r]_-{\psi} \ar[ru]^-{\tilde{\psi}_b} & H.
    }
    \end{aligned}
    \end{equation}
    We see that if $\varphi$ admits a lift $\tilde{\varphi}_{b-\varepsilon} \colon G \to T_{-b+\varepsilon}H$ for any $\varepsilon > 0$ sufficiently small, then $\varphi$ must also admit a lift $\tilde{\varphi}_{b} \colon G \to T_{-b}H$. In fact, when $\MS(G)$ and $\MS(H)$ are conical Lagrangians in a neighborhood of $\MS(G) \cap T_{-b}\MS(H)$, $\tau_{-b, -b+\varepsilon}(H)$ induces a natural isomorphism
    \begin{equation}
        \Hom(G, T_{-b}H) \xrightarrow{\sim} \Hom(G, T_{-b+\varepsilon}H)
    \end{equation}
    for sufficiently small $\varepsilon > 0$.
\end{enumerate}

First, we prove the case $m=2$. Since there are two Lagrangians on the end of the cobordism, by \cref{theorem:iterated_cone}, we need to estimate the energy of the morphisms involved in a mapping cone
\begin{equation*}
    \Cone(\pi_*\cHom^\star(F_N, G_0) \to \pi_*\cHom^\star(F_N, G_1)).
\end{equation*}
For that purpose, we need the following lemma.

\begin{lemma}\label{lem:energy-two-ends}
    Let $N, L_0, L_1 \subset T^*M$ be exact Lagrangians with primitives $g, g_0, g_1$. Let $F_N \in \cT_{N}(T^*M)$, $G_0 \in \cT_{L_0}(T^*M)$, and $G_1 \in \cT_{L_1}(T^*M)$ be simple sheaf quantizations of $N$, $L_0$, and $L_1$. 
    Then for any morphism $\varphi_{0,1} \colon G_0 \to G_1$, the energy of the induced morphism ${\varphi_{0,1}}_* \colon \pi_*\cHom^\star(F_N, G_0) \to \pi_*\cHom^\star(F_N, G_1)$
    is at least
    \begin{equation*}
        \min \{ \cA_1(q) -\cA_0(p) -\cA_{0,1}(r) > 0 \mid p \in N \cap L_0, q \in N \cap L_1, r \in L_0 \cap L_1 \},  
    \end{equation*}
    where for $i = 0, 1$,
    \begin{align*}
        \cA_i(p) & \coloneqq g(p) -g_i(p) \quad (p \in N \cap L_i), \\
        \cA_{0,1}(r) & \coloneqq g_0(r) - g_1(r) \quad (r \in L_0 \cap L_1).
    \end{align*}
\end{lemma}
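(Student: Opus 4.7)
The plan is to factor the induced morphism ${\varphi_{0,1}}_*$ through the Yoneda pairing of internal Homs in the Tamarkin category and track how action filtrations combine. By \cref{lem:ike-intersection}, the three persistence modules in question decompose into interval modules whose left endpoints are the action values at the intersection points; I would write
\[
\pi_*\cHom^\star(F_N, G_0) \simeq \bigoplus_{p \in N \cap L_0} \bfk_{[\cA_0(p),\infty)}[-\mu(p)]
\]
and analogously for $\pi_*\cHom^\star(G_0, G_1)$ (bars at $\cA_{0,1}(r)$ for $r \in L_0 \cap L_1$) and $\pi_*\cHom^\star(F_N, G_1)$ (bars at $\cA_1(q)$ for $q \in N \cap L_1$). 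For simplicity I focus on the infinite bars; finite bars admit an analogous analysis and do not affect the energy bound.

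Next, I would use the Yoneda pairing
\[
\pi_*\cHom^\star(F_N, G_0) \star \pi_*\cHom^\star(G_0, G_1) \to \pi_*\cHom^\star(F_N, G_1)
\]
coming from the Tamarkin composition structure to express ${\varphi_{0,1}}_*$ as the composite of $\mathrm{id} \star \varphi_{0,1}$ with the above map. The convolution $\bfk_{[a,\infty)} \star \bfk_{[c,\infty)} \simeq \bfk_{[a+c,\infty)}$ implies that a nontrivial contribution from the pair $(p, r)$ into the bar at $q$ requires the triangle energy
\[
\cA_1(q) - \cA_0(p) - \cA_{0,1}(r) \ge 0.
\]
This is the sheaf-theoretic shadow of the positivity of area for Floer triangles with boundary on $N, L_0, L_1$ and corners $p, q, r$; under the generic position assumption, the inequality is strict whenever the contribution is nonzero.

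The shift of the $(p,q)$-component of ${\varphi_{0,1}}_*$ equals $\cA_1(q) - \cA_0(p)$, which, by the factorization above, splits as $\cA_{0,1}(r) + \bigl(\cA_1(q)-\cA_0(p)-\cA_{0,1}(r)\bigr)$. In the Tamarkin convention morphisms $\bfk_{[a,\infty)} \to \bfk_{[a',\infty)}$ have non-negative shift $a'-a$, so the $r$-th summand of $\varphi_{0,1}$ in the bar decomposition of $\Hom(G_0, G_1)$ contributes only when $\cA_{0,1}(r) \ge 0$. Combined with the non-negativity of the triangle energy, the component shift is bounded below by the triangle energy itself, and hence by the minimum positive value of $\cA_1(q') - \cA_0(p') - \cA_{0,1}(r')$ over all triples. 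Since any nontrivial $(p,q)$-component of the full ${\varphi_{0,1}}_*$ must arise from at least one $r$ whose $r$-th summand is nonzero in $\varphi_{0,1}$ and whose Yoneda contribution is nonzero, this gives the stated lower bound.

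The main obstacle will be to make the Yoneda composition on bars precise at the sheaf-theoretic level and to verify that the coefficient pairing indeed realizes the triangle-energy shift. I expect this to follow from a microlocal computation of $\cHom^\star$ near each intersection point, using that $F_N, G_0, G_1$ are simple along their respective conical Lagrangians, together with the base-change properties of $\pi_*\cHom^\star$, generalizing the two-factor microlocal argument underlying \cref{lem:ike-intersection}.
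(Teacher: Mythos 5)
Your Yoneda-pairing route is genuinely different from the paper's. The paper instead introduces the notion of a \emph{lift} $\tilde{\varphi}_{0,1;b}\colon G_0 \to T_{-b}G_1$ of $\varphi_{0,1}$ and lets $b_{0,1}$ be the maximal $b$ for which a lift exists; since the lift ceasing to exist forces $\Hom(G_0,T_{-b}G_1)$ to jump, \cref{lem:ike-intersection} applied to $\cHom^\star(G_0,G_1)$ pins down $b_{0,1} \in \{\cA_{0,1}(r) \mid r \in L_0 \cap L_1\}$, and the energy bound then follows because $\varphi_{0,1*}$ factors through $\pi_*\cHom^\star(F_N, T_{-b_{0,1}}G_1)$. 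This sidesteps two genuine problems in your argument.

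First, your step ``the $r$-th summand of $\varphi_{0,1}$ contributes only when $\ldots$'' presupposes that $\varphi_{0,1}\in H^0\Hom_{\cD(M)}(G_0,G_1)$ admits a direct-sum decomposition indexed by $r\in L_0\cap L_1$ compatible with the bar decomposition of $\pi_*\cHom^\star(G_0,G_1)$. That decomposition is only available through a zigzag of quasi-isomorphisms, not canonically, and $\Hom_{\cD(M)}(G_0,G_1) \simeq \Gamma_{M\times[0,\infty)}(\cHom^\star(G_0,G_1))$ mixes bars when passing to $H^0$. The paper's $b_{0,1}$ is precisely the homotopy-invariant replacement for the ill-defined ``leading bar contribution of $\varphi_{0,1}$.''

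Second, and more seriously: you assert the strict inequality $>0$ ``under the generic position assumption'' by appealing to positivity of areas of Floer triangles, but there is no such area argument in the sheaf-theoretic setting, and this strictness is the essential content of the lemma. Your convolution bound $\bfk_{[a,\infty)}\star\bfk_{[c,\infty)}\simeq\bfk_{[a+c,\infty)}$ only yields $\cA_1(q)-\cA_0(p)-\cA_{0,1}(r)\geq 0$. To upgrade this to $>0$ one must rule out cancellations of bar endpoints, which the paper does by applying \cref{lem:ike-intersection} to $\Lambda_N$ against the union $\Lambda_{L_0}\cup T_{-b_{0,1}}(\Lambda_{L_1})$: because $N\cap L_0\cap L_1=\varnothing$, this union meets $\Lambda_N$ cleanly, $\Cone(G_0\to T_{-b_{0,1}}G_1)$ is simple along it, and the endpoint count of $\pi_*\cHom^\star(F_N,\Cone(G_0\to T_{-b_{0,1}}G_1))$ is exactly $\#(N\cap L_0)+\#(N\cap L_1)$, so no cancellation occurs. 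Without this step the lemma only delivers the non-strict bound, which is not enough for the downstream constant $\delta$ to be positive.
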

\begin{proof}
    Using \cref{lem:ike-intersection}, we know an end point of a bar of $\pi_* \cHom^\star(F_N,G_0)$ is in the set $\{ \cA_0(p) \mid p \in N \cap L_0 \}$, and an end point of a bar of $\pi_*\cHom^\star(F_N,G_1)$ is in the set $\{ \cA_1(q) \mid q \in N \cap L_1 \}$. We pick a basis $B(N \cap L_i)$ for $H_*(N \cap L_i; \bfk)$. Note that on every connected component of $N \cap L_i$, the action $\cA_i(p)$ is a constant, so $\cA_i$ is well defined on $B(N \cap L_i)$.
    Following \eqref{eqn:decompose-infinite-interval}, we can rewrite
    the interval decompositions of $\pi_* \cHom^\star(F_N,G_0)$ and $\pi_* \cHom^\star(F_N,G_1)$ as complexes of the direct sums of intervals of the form $\bfk_{[a,\infty)}$: 
    \begin{align*}
        \pi_*\cHom^\star(F_N,G_0) 
        & \simeq \lb \bigoplus_{p \in B(N \cap L_0)} \bfk_{[\cA_0({p}),\infty)}[k_{p}], {d_0}_* \rb, \\
        \pi_*\cHom^\star(F_N,G_1) 
        & \simeq \lb \bigoplus_{q \in B(N \cap L_1)} \bfk_{[\cA_1({q}),\infty)}[l_{q}], {d_1}_* \rb,
    \end{align*}
    where the identifications mean that both sides can be connected by a zig-zag of chain maps that are equivalences. 
    Then, fix a chain map
    \begin{equation*}
        {\varphi_{0,1}}_* \colon \lb \bigoplus_{p} \bfk_{[\cA_0({p}),\infty)}[k_p], {d_0}_* \rb \to \lb \bigoplus_{q} \bfk_{[\cA_1({q}),\infty)}[l_q], {d_1}_* \rb
    \end{equation*}
    that coincides with the the morphism $\pi_*\cHom^\star(F_N,G_0)\to \pi_*\cHom^\star(F_N,G_1)$ induced by $\varphi_{0,1} \colon G_0 \to G_1$ through the above identifications up to homotopy.
    
    We follow the terminology in (c) above. 
    By the remark there, we can set 
    \begin{equation*}
        b_{0,1} \coloneqq \max \lc b \in \bR_{\ge 0} \relmid \text{there is a lift $\tilde{\varphi}_{0,1; b} \colon G_0 \to T_{-b}G_1$ of $\varphi_{0,1} \colon G_0 \to G_1$} \rc.
    \end{equation*}
    First, every non-trivial component of ${\varphi_{0,1}}_*$ is of the form $\bfk_{[a,\infty)} \to \bfk_{[a',\infty)}$ with 
    \begin{equation}\label{eq:morphism_energy}
        a' -a \ge \min \{\cA_1(q) -\cA_0(p) - b_{0,1} \ge 0 \mid p \in N \cap L_0, q \in N \cap L_1 \}.
    \end{equation}    
    Since we cannot extend a lift $\tilde{\varphi}_{0,1; b}$ beyond $b_{0,1}$, we know that $H^0(\Hom(G_0, T_{-b_{0,1}-\varepsilon}G_1) )\to H^0(\Hom(G_0, T_{-b_{0,1}}G_1))$ is not an isomorphism for sufficiently small $\varepsilon > 0$. 
    Note that
    \begin{equation*}
        \Gamma_{[a,+\infty)}(\pi_* \cHom^\star(G_0, G_1))_{a} = \Cone(\Hom(G_0, T_{a-\varepsilon}G_1) \to \Hom(G_0, T_{a}G_1)).
    \end{equation*}
    Hence we can apply \cref{lem:ike-intersection} and get
    \begin{equation*}
        b_{0,1} \in \{ \cA_{0,1}(r) \mid r \in L_0 \cap L_1 \}.
    \end{equation*}
    The right-hand side of \eqref{eq:morphism_energy} is thus bounded below by 
    \begin{equation*}
        \min \{ \cA_1(q) -\cA_0(p) -\cA_{0,1}(r) \ge 0 \mid p \in N \cap L_0, q \in N \cap L_1, r \in L_0 \cap L_1 \},
    \end{equation*}
    where the case $\cA_1(q) -\cA_0(p) -\cA_{0,1}(r) = 0 $ will result in cancellations of endpoints.
    
    Then we show that the morphism induced by the lift 
    \[
    \begin{tikzcd}
        \pi_*\cHom^\star(F_N,G_0) \ar[r] \ar[d, "\sim" {anchor=south, rotate=90}]
        & \pi_*\cHom^\star(F_N,T_{-b_{0,1}}G_1) \ar[d, "\sim" {anchor=north, rotate=90}] \\
        \lb \bigoplus_{p} \bfk_{[\cA(p),\infty)}[k_p], {d_0}_* \rb \ar[r] 
        & \lb \bigoplus_{q} \bfk_{[\cA(q)-b_{0,1},\infty)}[l_q], {d'_1}_* \rb
    \end{tikzcd}
    \]
    does not make any cancellations of endpoints. Consider \cref{lem:ike-intersection} and let $\Lambda_N = \Lambda_N$ and $\Lambda_L = \Lambda_{L_0} \cup T_{-b_{0,1}}(\Lambda_{L_1})$. Since $N \cap L_0 \cap L_1 = \varnothing$, we know that $\Lambda_N$ and $T_{a}(\Lambda_L)$ intersect cleanly for any $a \in \bR$, and $\Cone(G_0 \to T_{-b_{0,1}}G_1)$ is simple along the smooth strata of $\Lambda_L$. Therefore, by \cref{lem:ike-intersection}, the number of endpoints of bars of 
    \begin{equation*}
        \pi_*\cHom^\star(F_N, \Cone(G_0 \to T_{-b_{0,1}}G_1))
    \end{equation*}
    is $\sum_{n} \dim H^n(N \cap L_0;\bfk) + \sum_{n} \dim H^n(N \cap L_1;\bfk)$, and we conclude that there are no cancellations of endpoints.  
    Hence, the right-hand side of \eqref{eq:morphism_energy} is bounded below by 
    \begin{equation*}
        \min \{ \cA_1(q) -\cA_0(p) -\cA_{0,1}(r) > 0 \mid p \in N \cap L_0, q \in N \cap L_1, r \in L_0 \cap L_1 \},
    \end{equation*}
    which proves the lemma.
\end{proof}

\begin{proof}[Proof of \cref{theorem:intersection}: the case $m=2$]
    Let $F_V \in \cT(T^*(M \times J))$ be a simple sheaf quantization of $V$ constructed in \cref{thm:sheaf-quan-cob-filling}. Set each $F_{L_i} \coloneqq (F_V)^i_{-1}$ in the iterated cone decomposition in \cref{theorem:iterated_cone}, which is a simple sheaf quantization of $L_i$ for some primitives $f_i$.
    We also set $F_{L'} \coloneqq (F_V)_{+1}$, which is a simple sheaf quantization of $L'$. 
    The shadow $\cS(F_V)$ coincides with the Lagrangian shadow $\cS(V)$.
    Moreover, let $F_N$ be a simple sheaf quantization of $N$ with primitive $g$.

    By \cref{theorem:inequality_multiple_ends}, we get
    \begin{equation*}
        d'_{\cT(T^*M)}(\Cone(F_{L_0}[-1] \to T_{c_1}F_{L_1}),F_{L'}) = d'_{\cT(T^*M)}(\tilde{F}_{-1}, F_{+1}) \le \cS(F).
    \end{equation*}
    We shall prove that the length of the shortest bar of 
    \begin{equation*}
        \pi_* \cHom^\star(F_N, \Cone(F_{L_0}[-1] \to T_{c_1}F_{L_1}))
    \end{equation*}
    is bounded below by a constant that depends only on $N$, $L_0$, and $L_1$. Set 
    \begin{equation*}
        G_0 \coloneqq T_{c_0}F_{L_0}[-1], \quad G_1 \coloneqq T_{c_1}F_{L_1}
    \end{equation*}
    and $g_i \coloneqq f_i-c_i$ for $i=0,1$. 
    By \eqref{eqn:cone-definition} the cone of $\varphi_{0,1}$ is isomorphic to the complex 
    \begin{equation*}
        \lb \bigoplus_{q} \bfk_{[\cA_1({q}),\infty)}[l_q] \oplus \bigoplus_{p} \bfk_{[\cA_0({p}),\infty)}[k_p+1], \mqty[{d_1}_* & {\varphi_{0,1}}_* \\ 0 & -{d_0}_*[1]] \rb. 
    \end{equation*}
    The energy of the differential ${d_0}_*$ (resp.\ ${d_1}_*$) is at least 
    \begin{align*}
        & \min \{\cA_0(p) -\cA_0(p') >0 \mid p, p' \in N \cap L_0 \},\\
        \text{(resp. }& \min \{ \cA_1(q) - \cA_1(q')>0 \mid q, q' \in N \cap L_1 \}).
    \end{align*}    
    Moreover, by \cref{lem:energy-two-ends}, the energy of ${\varphi_{0,1}}_*$ is at least
    \begin{equation*}
        \min \{ \cA_1(q) -\cA_0(p) -\cA_{0,1}(r) > 0 \mid p \in N \cap L_0, q \in N \cap L_1, r \in L_0 \cap L_1 \}.
    \end{equation*} 
    Combining the above inequalities, we conclude that the length of any bar of the concerned sheaf $\pi_* \cHom^\star(F_N, \Cone(G_0 \to G_1))$ is at least
    \begin{equation*}
        \delta \coloneqq \min \lc
        \begin{aligned}
            &\cA_0(p) -\cA_0(p') >0,\ \cA_1(q) - \cA_1(q')>0, \\
            &\cA_1(q) -\cA_0(p) -\cA_{0,1}(r) > 0 
        \end{aligned}
        \relmid
        \begin{aligned}
            &p, p' \in N \cap L_0,\\
            &q, q' \in N \cap L_1, \\
            &r \in L_0 \cap L_1.
        \end{aligned}\rc.
    \end{equation*}
    This quantity is independent of the shifts $c_0$, $c_1$, and the primitives $g, f_0, f_1$. Then, when $\cS(F_V) <\delta$, 
    the number of end points of bars of $\pi_*\cHom^\star(F_N, F_{L'})$ is at least that of 
    $\pi_* \cHom^\star(F_N,\Cone(G_0 \to G_1))$.
    Hence by \cref{lem:ike-intersection}, $\sum_{n} \dim H^n(N \cap L';\bfk)$ is at least $\sum_{n} \dim H^n(N \cap L_0;\bfk) + \sum_{n} \dim H^n(N \cap L_1;\bfk)$. 
    This completes the proof in the case $m=2$.
\end{proof}

For the proof of \cref{theorem:intersection} in the general case, we need to generalize \cref{lem:energy-two-ends} to the case $m \ge 3$ and estimate the energy of the morphisms in an iterated cone decomposition, or equivalently, a twisted complex \cite{BondalKapranov}. 
To this end, we consider the following situation.
Let $G_i \in \cT(T^*M) \ (i=0,\dots,m-1)$. Similar to \eqref{eqn:twist-complex-definition}, a twisted complex
\begin{equation*}
    [G_0 \to G_1 \to \dots \to G_{m-2} \to G_{m-1}]
\end{equation*}
is described up to sign as $G_{m-1} \oplus \dots \oplus G_{1}[2-m] \oplus G_{0}[1-m]$ with an upper triangular differential $[\varphi_{m-1-i,m-1-j}]_{i<j}$, where $\varphi_{i,j} \colon G_i \to G_j$ is a non-closed morphism of degree $i-j+1$ that defines a null homotopy as explained in \eqref{eqn:twist-complex-null-homotopy}. For $F \in \cT(T^*M)$, it induces a twisted complex
\begin{equation}\label{eqn:iterated-cone}
    [\pi_*\cHom^\star(F, G_0) \to \pi_*\cHom^\star(F, G_1) \to \dots \to \pi_*\cHom^\star(F, G_{m-1})].
\end{equation}
We denote by ${\varphi_{i,j}}_* \colon \pi_* \cHom^\star(F_{N}, G_i) \to \pi_*\cHom^\star(F_{N}, G_j)$ the morphism induced by $\varphi_{i,j} \colon G_i \to G_j$ and choose a chain homotopy between these sheaves on $\bR$ that represents ${\varphi_{i,j}}_*$.

\begin{lemma}\label{lemma:energy_general}
    Let $N, L_0, \dots, L_{m-1} \subset T^*M$ be exact Lagrangians with primitives $g, g_0, \dots$, $g_{m-1}$. Let $F_N \in \cT_{N}(T^*M)$ and $G_i \in \cT_{L_i}(T^*M)$ be simple sheaf quantizations of $N$ and $L_i$. Consider the twisted complex 
    \begin{equation*}
        [G_0 \to G_1 \to \dots \to G_{m-2} \to G_{m-1}]
    \end{equation*}
    with morphisms $\varphi_{i,j} \colon G_i \to G_j$ of degree $i-j+1$. Then for any $i < j$, the energy of the induced morphism ${\varphi_{i,j}}_* \colon \pi_* \cHom^\star(F_{N}, G_i) \to \pi_*\cHom^\star(F_{N}, G_j)$ is at least 
    \begin{equation}\label{eq:energy_lower_bound_ij}
        \min \lc
        \begin{aligned}
            & \cA_{j}(p_{j}) - \cA_{i}(p_{i}) - \cA_{i,i_1}(p_{i,i_1}) \\
            & - \cA_{i_1,i_2}(p_{i_1,i_2}) - \dots - \cA_{i_k,j}(p_{i_k,j}) > 0
        \end{aligned}
        \relmid 
        \begin{aligned}
            & p_s \in N \cap L_s \ (s=i,j) \\
            & i < i_1 < \dots < i_k < j \ (k \ge 0) \\
            & p_{s,t} \in L_s \cap L_t
        \end{aligned}
        \rc,
    \end{equation}
    where 
    \begin{align*}
        \cA_i(p_{i}) & \coloneqq g(p_{i})-g_i(p_{i}) \quad (p_i \in N \cap L_i), \\
        \cA_{i,j}(p_{i,j}) & \coloneqq g_i(p_{i,j}) - g_j(p_{i,j}) \quad (p_{i,j} \in L_i \cap L_j).
    \end{align*}
    When $k=0$, the value in \eqref{eq:energy_lower_bound_ij} should be understood as $\cA_{j}(p_{j}) - \cA_{i}(p_{i}) -\cA_{i,j}(p_{i,j})$.
\end{lemma}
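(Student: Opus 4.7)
The plan is to induct on $j - i$. For the base case $j - i = 1$, the morphism $\varphi_{i,i+1}$ is a chain map (degree $0$), so \cref{lem:energy-two-ends} applied with $L_0, L_1, G_0, G_1$ replaced by $L_i, L_{i+1}, G_i, G_{i+1}$ yields precisely the $k = 0$ term $\cA_{i+1}(p_{i+1}) - \cA_{i}(p_{i}) - \cA_{i,i+1}(p_{i,i+1}) > 0$, which is the only possible chain from $i$ to $i+1$ in the formula.

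For the inductive step with $j - i \geq 2$, I would exploit the twisted complex relation \eqref{eqn:twist-complex-null-homotopy}, which reads
\[
    d_j \varphi_{i,j} - \varphi_{i,j} d_i = \sum_{i < k < j} \varphi_{k,j} \circ \varphi_{i,k},
\]
and expresses $\varphi_{i,j}$ as a higher homotopy of degree $i - j + 1$ witnessing the vanishing of the right-hand side in the derived category. Applying $\pi_* \cHom^\star(F_N, -)$, the same relation holds for ${\varphi_{i,j}}_*$. Two kinds of contributions bound the energy of ${\varphi_{i,j}}_*$ from below. The first is a \emph{null-homotopy contribution}: following the second half of the proof of \cref{lem:energy-two-ends}, I would define $b_{i,j}$ as the maximum shift such that $\varphi_{i,j}$ lifts to a morphism $G_i \to T_{-b_{i,j}} G_j[i-j+1]$ in the derived category; the obstruction to extending the lift lies in the persistence cohomology of $\pi_* \cHom^\star(G_i, G_j)$ in degree $i - j + 1$, and \cref{lem:ike-intersection} identifies $b_{i,j}$ with some $\cA_{i,j}(p_{i,j})$ for $p_{i,j} \in L_i \cap L_j$. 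This yields the $k = 0$ term. The second is a \emph{compositional contribution}: by the inductive hypothesis applied to $(i, k)$ and $(k, j)$ for each intermediate $k$, each ${\varphi_{i,k}}_*$ and ${\varphi_{k,j}}_*$ has energy bounded by the corresponding minimum over chains. Since the energy of a composition of chain maps between persistence modules is at least the sum of the individual energies, concatenating a chain $(i, i_1, \dots, i_{k_1}, k)$ and a chain $(k, i'_1, \dots, i'_{k_2}, j)$ produces a chain from $i$ to $j$ through $k$, yielding the terms with $k \geq 1$. Taking the minimum of all such contributions gives the stated bound.

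The main technical obstacle is to rigorously reconcile the two contributions and justify that the resulting bound is the minimum over \emph{all} chains. One must verify that the obstruction-theoretic argument for $b_{i,j}$ still applies when $\varphi_{i,j}$ is a higher homotopy rather than a chain map, so that \cref{lem:ike-intersection} continues to detect intersections of $L_i$ with $L_j$ in the relevant negative degree. Further, one needs to choose a representative of ${\varphi_{i,j}}_*$ between interval decompositions of the persistence modules that simultaneously respects the null-homotopy structure and the compositional decomposition; this requires picking compatible maximal lifts $\tilde\varphi_{i,k;b_{i,k}}$ and $\tilde\varphi_{k,j;b_{k,j}}$ for every intermediate $k$ and then showing every non-trivial component $\bfk_{[a,\infty)} \to \bfk_{[a',\infty)}$ of a suitable representative satisfies $a' - a$ bounded below by the minimum over all chains. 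Handling signs and degrees in the twisted complex relation, which do not affect the energy but complicate the bookkeeping, is also a point that requires care.
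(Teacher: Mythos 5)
Your outline identifies the two relevant phenomena (a lift/obstruction for the higher homotopy $\varphi_{i,j}$, and a compositional contribution through intermediate $L_k$'s), but the proposal stops exactly at the point where the real argument begins. For $j-i\ge 2$, $\varphi_{i,j}$ is \emph{not} a chain map — it is a null-homotopy of $\psi_{i,j}=\sum_{i<k<j}\varphi_{k,j}\varphi_{i,k}$ — so the phrase ``$\varphi_{i,j}$ lifts to a morphism $G_i\to T_{-b}G_j[i-j+1]$ in the derived category'' does not yet make sense and cannot be fed directly into \cref{lem:ike-intersection}. The paper's fix is \cref{lemma:mapping_cone}: a null-homotopy of $\psi_{i,j}$ is the same as a chain map $G_i[1]\to\Cone(\psi_{i,j})$ splitting the projection. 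The lift condition is then formulated as a lift along $\Cone(\tilde\psi_{i,j;b})\to\Cone(\psi_{i,j})$, where $\tilde\psi_{i,j;b}$ is the \emph{lifted} composition built from the inductively chosen $\tilde\varphi_{i,k;b_{i,k}},\tilde\varphi_{k,j;b_{k,j}}$. The obstruction to extending this lift is identified, via a $3\times3$ diagram of cones, with a class in $\Hom(G_i,\bfk_{M\times[-b,0)}\star G_j[1])$, which is what lets \cref{lem:ike-intersection} enter in the correct degree. You flag this as a ``technical obstacle'' to be verified, but it is precisely the content of the inductive step, not bookkeeping.

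The compositional part of your argument is also not quite the right shape. Summing the inductive energy bounds for ${\varphi_{i,k}}_*$ and ${\varphi_{k,j}}_*$ gives an extra $\cA_k(p_k)-\cA_k(p'_k)$ term that does not telescope, and in any case bounding the energy of $\psi_{i,j}$ does not bound the energy of a null-homotopy of it. The paper instead works throughout with the shift $b_{i,j}$ rather than with energies: it sets $\tilde b_{i,j}=\min_k(b_{i,k}+b_{k,j})$, defines $b_{i,j}\in[0,\tilde b_{i,j}]$ as the maximal shift admitting a compatible lifted null-homotopy, and shows by the dichotomy ``$b_{i,j}=\tilde b_{i,j}$'' vs.\ ``$b_{i,j}<\tilde b_{i,j}$'' that $b_{i,j}$ lies in the chain-sum set \eqref{eq:bij_set}; the energy bound $\cA_j(p_j)-\cA_i(p_i)-b_{i,j}$ then follows from a separate no-cancellation argument (taking $\Lambda_L=\Lambda_{L_i}\cup T_{-b_{i,i+1}}\Lambda_{L_{i+1}}\cup\cdots\cup T_{-b_{i,j}}\Lambda_{L_j}$, using cleanness and \cref{lem:ike-intersection} to count endpoints). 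This last step — showing the lifted twisted complex has exactly $\sum_s\#(N\cap L_s)$ bar endpoints, so no spurious cancellations shrink the bound — is missing from your sketch but is essential to pass from a bound on the lift-shift to a bound on the actual energy.
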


Assuming the lemma, we can immediately finish the proof of the main theorem:

\begin{proof}[Proof of \cref{theorem:intersection}: the case $m \ge 3$]
    Let $F_V \in \cT(T^*(M \times J))$ be a simple sheaf quantization of $V$ constructed in \cref{thm:sheaf-quan-cob-filling}. 
    We set $F_{L_i} \coloneqq (F_V)^i_{-1}$, which is a simple sheaf quantization of $L_i$ for some primitive $f_i$ and set $F_{L'} \coloneqq (F_V)_{+1}$, which is a simple sheaf quantization of $L'$ with primitive $g$.
    Moreover, let $F_N$ be a simple sheaf quantization of $N$.
    We set $G_i \coloneqq T_{c_i}F_i[m-i-1] \ (i=0,1,\dots,m-1)$, and the primitives $g_i \coloneqq f_i-c_i \ (i=0,1,\dots,m-1)$. 

    We denote by ${d_i}_*$ the associated differential of $\pi_* \cHom^\star(F_N,G_i)$.  
    Similar to the case $m=2$, the energy of ${d_i}_*$ is at least $\min \{\cA_i(p_i) -\cA_i(p'_i) >0 \mid p_i, p'_i \in N \cap L_i \}$ ($i=0,1,\dots,m-1$).
    Combining this with \cref{lemma:energy_general}, we find that for any choice of the primitives $f_i$ and shifts $c_i$ $(0 \leq i \leq m-1)$, the energy of $d_i \ (i=0,1,\dots,m-1)$ and ${\varphi_{i,j}}_* \ (0 \le i < j \le m-1)$ is at least
    \begin{equation}\label{eq:energy_lower_bound_uniform}
        \delta \coloneqq \min \lc
        \begin{aligned}
            & \cA_{j}(p_{j}) - \cA_{i}(p_{i}) - \cA_{i,i_1}(p_{i,i_1}) \\
            & - \cA_{i_1,i_2}(p_{i_1,i_2}) - \dots - \cA_{i_k,j}(p_{i_k,j}) > 0
        \end{aligned}
        \relmid 
        \begin{aligned}
            & 0 \le i \le j \le m-1 \\
            & p_s \in N \cap L_s \ (s=i,j) \\
            & i < i_1 < \dots < i_k < j \ (k \ge 0) \\
            & p_{s,t} \in L_s \cap L_t
        \end{aligned}
        \rc.
    \end{equation}
    Here, when $i=j$ and $k=0$, the value in the set should be understood as $\cA_{i}(p_{i})-\cA_{i}(p'_{i})$ with $p_{i},p'_{i} \in N \cap L_{i}$. 
    Hence, for $\cS(V) < \delta$, by the same arguments in \cref{proposition:inequality_one_end}, the number of endpoints of bars $\pi_*\cHom^\star(F_N, F_{L'})$, which by \cref{lem:ike-intersection} is equal to $\sum_{n} \dim H^n(N \cap L';\bfk)$, is at least that of 
    \begin{equation*}
        \pi_*\cHom^\star(F_N, [G_0 \to T_{c_1}G_1 \to \dots \to T_{c_{m-1}}G_{m-1}]),
    \end{equation*}
    which by \cref{lem:ike-intersection} is equal to $\sum_{i=0}^{m-1} \sum_{n} \dim H^n(N \cap L_i;\bfk)$.
    This completes the proof of \cref{theorem:intersection}. 
\end{proof}   

The rest of this section is occupied by the proof of \cref{lemma:energy_general}. 
First, rather than estimating the energy of the null homotopy ${\varphi_{i,j}}_*$, we choose to estimate the energy of some corresponding chain map.
We observe the following simple lemma, which implies that the null homotopy $\varphi_{i,j}$ in \eqref{eqn:twist-complex-null-homotopy} corresponds to a chain map $G_i[1] \to \Cone(\sum_{i < k < j} \varphi_{k,j} \varphi_{i,k})$.
    
\begin{lemma}\label{lemma:mapping_cone}
    Let $\psi \colon X \to Y$ be a chain map between chain complexes.
    Then there is a one-to-one correspondence between null homotopies of $\psi$ and chain maps $X[1] \to \Cone(\psi)$ such that $X[1] \to \Cone(\psi) \to X[1]$ is equal to the identity.
\end{lemma}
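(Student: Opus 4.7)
The plan is a direct unwinding of the definitions. Using the convention \eqref{eqn:cone-definition}, any graded map $s \colon X[1] \to \Cone(\psi)$ such that the composition with the projection $\pi \colon \Cone(\psi) \to X[1]$ is the identity must be of the form
\begin{equation}
  s^n(x) = (h^n(x),\, x) \in Y^n \oplus X^{n+1} = \Cone(\psi)^n
\end{equation}
for a uniquely determined family of maps $h^n \colon X^{n+1} \to Y^n$, that is, a degree $-1$ map $h \colon X \to Y$. Conversely, any such $h$ produces a graded map $s$ splitting $\pi$. So the first step is to establish this parameterization, which is immediate.

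Next, I would translate the chain-map condition on $s$ into a condition on $h$. Using $d_{X[1]}^n = -d_X^{n+1}$, one computes
\begin{equation}
  d_{\Cone(\psi)}^n \circ s^n(x) - s^{n+1} \circ d_{X[1]}^n(x)
  = \bigl(d_Y^n h^n(x) + \psi^n(x) + h^{n+1} d_X^{n+1}(x),\, 0\bigr).
\end{equation}
Hence $s$ is a chain map if and only if $\psi^n = -d_Y^n h^n - h^{n+1} d_X^{n+1}$ for every $n$, i.e.\ if and only if $-h$ (or equivalently $h$, after the usual sign convention) is a null homotopy of $\psi$. This gives the bijection between the set of null homotopies of $\psi$ and the set of chain maps $X[1] \to \Cone(\psi)$ splitting $\pi$.

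There is no real obstacle here; the only thing to be careful about is the sign convention for the differential on the shift $X[1]$, which determines whether the resulting correspondence sends $h$ to $h$ itself or to $-h$. Both conventions give an honest bijection, so the statement is unaffected.
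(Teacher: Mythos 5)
Your proof is correct and takes essentially the same route as the paper: parameterize the graded splittings of $\Cone(\psi) \to X[1]$ by the degree $-1$ map $h$ in the first component and translate the chain-map condition into the null-homotopy relation (the paper writes $\eta^n = (-s^{n+1}, \id_{X^{n+1}})$, i.e.\ your $h = -s$ up to a shift). The only discrepancy is a harmless indexing slip: since $h^n \colon X^{n+1} \to Y^n$, the condition you derive should read $\psi^{n+1} = -d_Y^n h^n - h^{n+1} d_X^{n+1}$, matching the paper's $\psi^{n+1} = d_Y^n s^{n+1} + s^{n+2} d_X^{n+1}$.
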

\begin{proof}
    Let $(s^n \colon X^{n} \to Y^{n-1})_n$ be a null homotopy of $\psi$ so that they satisfy $\psi^n=d_Y^{n-1}s^n+s^{n+1}d_X^n$.
    Set 
    \begin{equation}\label{eq:nullhtpy-isom}
        \eta^n \coloneqq \mqty[-s^{n+1} \\ \id_{X^{n+1}}] \colon X^{n+1} \to Y^n \oplus X^{n+1}.
    \end{equation} 
    Then, we have
    \begin{equation*}
        \mqty[d_Y^{n} & \psi^{n+1} \\ 0 & d_{X[1]}^n] \mqty[-s^{n+1} \\ \id_{X^{n+1}}] 
        =
        \mqty[-d_Y^{n} s^{n+1} + \psi^{n+1} \\ d_{X[1]}^n]
        = 
        \mqty[-s^{n+2} \\ \id_{X^{n+2}}] d_{X[1]}^n,
    \end{equation*}
    which shows that $\eta=(\eta^n)_n \colon X[1] \to \Cone(\psi)$ is a chain map such that the composite $X[1] \xrightarrow{\eta} \Cone(\psi) \to X[1]$ is the identity.

    Conversely, $\eta=(\eta^n)_n \colon X[1] \to \Cone(\psi)$ is a chain map such that $X[1] \xrightarrow{\eta} \Cone(\psi) \to X[1]$ is the identity.
    We can find that $\eta_n$ has to be of the form \eqref{eq:nullhtpy-isom}. 
    By the above computation, the commutativity $d_{\Cone(\psi)}^n \eta^n = \eta^{n+1} d_{X[1]}^n$ implies that $-d_Y^{n+1}s^{n+1} + \psi^{n+1} = s^{n+2}d_{X}^{n+1}$.
    This shows that $(s^n)_n$ is a null homotopy of $\psi$.
\end{proof}

We will now prove \cref{lemma:energy_general}. \cref{lem:energy-two-ends} estimates the energy of the chain map ${\varphi_{i,j}}_*$ when $j-i = 1$. In general, we will need to estimate the energy of ${\varphi_{i,j}}_*$ using induction on $j-i$ by proving the following claim, which immediately implies \cref{lemma:energy_general}.
    
\begin{claim}\label{claim:general}
    For any $i<j$, one can define $b_{i,j} \in \bR_{\ge 0}$ and a degree $i-j+1$ map $\tilde{\varphi}_{i,j;b_{i,j}} \colon G_i \to T_{-b_{i,j}}G_j$ that satisfy the following properties:
    \begin{enumerate}
        \item \label{claim:first}
        \begin{equation}\label{eq:bij_set}
            b_{i,j} \in 
            \lc 
                \cA_{i,i_1}(p_{0,i_1}) + \cA_{i_1,i_2}(p_{i_1,i_2}) + \dots + \cA_{i_k,j}(p_{i_k,j})                
            \relmid
            \begin{aligned}
                & i < i_1 < \dots < i_k < j \ (k \ge 0)\\
                & p_{s,t} \in L_s \cap L_t
            \end{aligned}
        \rc,
        \end{equation} 
        \item the twisted complex formed by the iterated mapping cone \eqref{eqn:iterated-cone} can be lifted to a twisted complex formed by the iterated mapping cone
        \begin{equation*}
           \big[ \pi_*\cHom^\star(F_N, G_{i})
            \to \pi_*\cHom^\star(F_N, T_{-b_{i,i+1}}G_{i+1}) \to \dots \to \pi_*\cHom^\star(F_N, T_{-b_{i,j}}G_{j}) \big]
        \end{equation*} 
        with the differential given by lifts $\tilde{\varphi}_{s, t; b_{i,t}-b_{i,s}} \colon G_s \to T_{-b_{s,t}}G_t[s-t+1]$ of $\varphi_{s,t} \colon G_s \to G_t[s - t + 1]$ for $i \le s \le t \le j$,         
        and \label{claim:second}
        \item the energy of ${\varphi_{i,j}}_*$ is at least \label{claim:third}
        \begin{equation*}
            \min \lc \cA_j(p_j) - \cA_i(p_i) - b_{i,j} > 0 \relmid p_s \in N \cap L_s \rc.
        \end{equation*}
    \end{enumerate}
    Here, when $k=0$, the value in \eqref{eq:bij_set} should be understood as $\cA_{i,j}(p_{i,j})$.           
\end{claim}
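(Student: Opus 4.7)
The proof proceeds by induction on $d = j - i$. The base case $d = 1$ is essentially a repackaging of the argument of \cref{lem:energy-two-ends}: define $b_{i,i+1}$ as the supremum of shifts $b \ge 0$ for which $\varphi_{i,i+1}$ admits a lift $\tilde{\varphi}_{i,i+1;b} \colon G_i \to T_{-b}G_{i+1}$. By \eqref{eqn:existence-max-lift} this supremum is attained, and by \cref{lem:ike-intersection} applied to $\Lambda_{L_i}$ and $\Lambda_{L_{i+1}}$ the obstruction to extending the lift beyond $b_{i,i+1}$ sits at an endpoint of a bar of $\pi_*\cHom^\star(G_i, G_{i+1})$, forcing $b_{i,i+1} \in \{ \cA_{i,i+1}(p) \mid p \in L_i \cap L_{i+1}\}$ and establishing (i). Property (ii) is vacuous for two terms, and (iii) is the chain-level energy accounting already carried out in the proof of \cref{lem:energy-two-ends}.

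For the inductive step, the key observation is that the relation \eqref{eqn:twist-complex-null-homotopy} asserts that $\varphi_{i,j}$ is a null homotopy of the chain map
\begin{equation}
    \Psi_{i,j} \coloneqq \sum_{i<k<j} \varphi_{k,j} \circ \varphi_{i,k} \colon G_i \to G_j[i-j+2].
\end{equation}
Applying the inductive hypothesis to each pair $(i,k)$ and $(k,j)$ with $i<k<j$, we obtain shifts $b_{i,k}, b_{k,j}$ and lifts $\tilde{\varphi}_{i,k;b_{i,k}}$, $\tilde{\varphi}_{k,j;b_{k,j}}$ whose compositions lift each summand $\varphi_{k,j}\varphi_{i,k}$ at relative shift $-(b_{i,k}+b_{k,j})$. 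One then sets $b_{i,j}$ to be the maximal shift $b \ge 0$ for which the coherent composite $\tilde{\Psi}_{i,j;b}$ lifts $\Psi_{i,j}$ (after absorbing the shifts $b_{i,k}+b_{k,j}$ into \cref{lemma:mapping_cone}) \emph{and} for which this lifted chain map admits a null homotopy extending $\varphi_{i,j}$. As in the base case, the obstruction to going beyond $b_{i,j}$ is recorded by an endpoint of a bar in $\pi_*\cHom^\star(G_i, G_j[i-j+2])$, so \cref{lem:ike-intersection} contributes a summand $\cA_{i,j}(p_{i,j})$, while the shifts coming from intermediate compositions contribute $b_{i,i_1}+b_{i_1,i_2}+\dots+b_{i_k,j}$ inductively expressed as sums of the $\cA_{\bullet,\bullet}$. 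Together these give exactly the form described in (i). Property (ii) is automatic from the simultaneous construction, and (iii) follows by reusing the argument of \cref{lem:energy-two-ends}, now with the iteratively lifted morphism ${\varphi_{i,j}}_*$ replacing the single ${\varphi_{0,1}}_*$.

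The main obstacle lies in the bookkeeping of multiple obstructions that arise simultaneously from every intermediate chain $i < i_1 < \dots < i_k < j$: each broken composition contributes a term to $\tilde{\Psi}_{i,j}$, and the choice of $b_{i,j}$ must be coherent with all of them before the null homotopy lift of $\varphi_{i,j}$ can be made. This is precisely what forces $b_{i,j}$ to belong to the sum-of-actions set in (i) rather than to the simpler set $\{\cA_{i,j}(p)\}$, and it is why the two-end argument of \cref{lem:energy-two-ends} does not suffice as a black box. Keeping the triangular shape of the twisted complex \eqref{eqn:twist-complex-definition} under these iterated lifts, and checking that no unwanted boundary terms appear when one applies \cref{lem:ike-intersection} to the multiple pairs $(L_s, L_t)$, is the technical heart of the induction.
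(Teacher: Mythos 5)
Your outline follows the paper's strategy closely: induction on $j-i$, the null-homotopy interpretation of $\varphi_{i,j}$ via \cref{lemma:mapping_cone}, reduction to the chain map $\psi_{i,j}=\sum_{i<k<j}\varphi_{k,j}\varphi_{i,k}$, and the identification of the obstruction via \cref{lem:ike-intersection}. You also correctly flag the conceptual crux: the intermediate compositions force $b_{i,j}$ into the larger sum-of-actions set, rather than the simpler set $\{\cA_{i,j}(p)\}$ from the two-end case. So the architecture is right.

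There are, however, several substantive steps that you gesture at but do not supply, and at least one of them is not routine. First, your assertion that ``the obstruction to going beyond $b_{i,j}$ is recorded by an endpoint of a bar in $\pi_*\cHom^\star(G_i, G_j[\cdot])$'' is exactly what needs to be \emph{proved} before \cref{lem:ike-intersection} can be invoked, and it is not immediate: one must reformulate the existence of a null homotopy of $\tilde{\psi}_{i,j;b}$ lifting $\varphi_{i,j}$ as the vanishing of the composite $G_i[1]\to\Cone(\psi_{i,j})\to\bfk_{M\times[-b,0)}\star G_j[1]$. The paper does this via an explicit $3\times 3$/octahedral diagram; without that reduction, the obstruction to the lift lives in a cone whose Hom-groups do not obviously decompose into the intersection-point form required by \cref{lem:ike-intersection}. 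Second, you do not make explicit the cap $\tilde{b}_{i,j}\coloneqq\min_{i<k<j}(b_{i,k}+b_{k,j})$ and the resulting dichotomy: either $b_{i,j}=\tilde{b}_{i,j}$ (in which case (i) follows purely from the inductive hypothesis applied to $b_{i,k},b_{k,j}$), or $b_{i,j}<\tilde{b}_{i,j}$ (in which case one must argue that the restriction map on $\Hom(G_i,T_{-b}G_j)$ fails to be an isomorphism precisely at $b_{i,j}$, yielding an $\cA_{i,j}$ endpoint). Your wording ``together these give exactly the form described in (i)'' blurs this either/or into a vague combination. Third, claiming (ii) is ``automatic'' skips the verification that $b_{i,t}-b_{i,s}\le b_{s,t}$ for $i\le s\le t\le j$, which is what guarantees that the lifts $\tilde{\varphi}_{s,t;b_{i,t}-b_{i,s}}$ exist and assemble into a twisted complex; this inequality is not automatic but follows from the definition of $\tilde{b}_{i,t}$ and (ii) for smaller $j-i$. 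Finally, for (iii) you need to rule out cancellations of bar endpoints by applying \cref{lem:ike-intersection} to the union $\Lambda_{L_i}\cup T_{-b_{i,i+1}}\Lambda_{L_{i+1}}\cup\cdots\cup T_{-b_{i,j}}\Lambda_{L_j}$ and using the hypothesis $N\cap L_s\cap L_t=\varnothing$; your final paragraph acknowledges that this must be done, but the argument is not given.
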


\begin{proof}
    \textbf{(I)} For $j-i=1$, we define
    \begin{equation}
        b_{i,j} \coloneqq \max \lc b \in \bR_{\ge 0} \relmid \text{there exists a lift $\tilde{\varphi}_{i,j;b} \colon G_i \to T_{-b}G_{j}$ of $\varphi_{i,j} \colon G_i \to G_{j}$} \rc.
    \end{equation}
    By the proof of \cref{lem:energy-two-ends} (for the case $m=2$), we have proved \cref{claim:general}.
    Now we fix a maximal lift $\tilde{\varphi}_{i,j;b_{i,j}} \colon G_i \to T_{-b_{i,j}}G_{j}$ for $j-i=1$.
    \smallskip

    \textbf{(II)} Suppose that for any $i,j$ with $j-i<l$, we have defined $b_{i,j}$, fixed a maximal lift $\tilde{\varphi}_{i,j;b_{i,j}} \colon G_i \to T_{-b_{i,j}}G_j$, and proved \cref{claim:general}. Now we will prove \cref{claim:general} for $j-i=l$.
    \smallskip
    
    \textbf{Step 1: Definition of $b_{i,j}$}. We define $b_{i,j}$ for $j-i=l$ as follows.
    Set 
    \begin{equation}\label{eqn:definition_tildeb}
        \tilde{b}_{i,j} \coloneqq \min\{ b_{i,k} + b_{k,j} \mid i<k<j \}
    \end{equation}
    and consider the lift of $\psi_{i,j} \coloneqq \sum_{i < k < j} \varphi_{k,j} \varphi_{i,k}$ of the form
    \begin{equation*}
        \tilde{\psi}_{i,j;b} \coloneqq \sum_{i < k < j} \tau_{-b_{i,k} - b_{k,j},-b}(G_j) \tilde{\varphi}_{k,j;b_{k,j}} \tilde{\varphi}_{i,k;b_{i,k}} 
    \end{equation*}
    for $0 \le b \le \tilde{b}_{i,j}$.
    Note that by \eqref{eqn:twist-complex-null-homotopy}, $\varphi_{i,j}$ is a null homotopy of $\psi_{i,j}$. We then define
    \begin{equation}\label{eq:def-induction-b_ij}
        b_{i,j} \coloneqq \lc b \in [0,\tilde{b}_{i,j}] \relmid 
        \begin{aligned}
            & \text{there exists a null homotopy $\tilde{\varphi}_{i,j;b}$} \\
            & \text{of $\psi_{i,j;b}$ that is a lift of $\varphi_{i,j}$}
        \end{aligned}
         \rc.
    \end{equation}
    Here the condition means that the following diagram commutes when $\varphi_{i,j}$ and $\tilde{\varphi}_{i,j;b}$ are regarded as chain maps by \cref{lemma:mapping_cone}:
    \[
    \begin{tikzcd}
        & \Cone(\tilde{\psi}_{i,j;b}) \ar[d] \\
        G_i[1] \ar[r, "{\varphi_{i,j}}"'] \ar[ru, "{\tilde{\varphi}_{i,j;b}}"] & \Cone(\psi_{i,j}). 
    \end{tikzcd}
    \]
    We claim that the quantity $b_{i,j}$ and the lift $\varphi_{i,j;b_{i.j}}$ defined above satisfy \cref{claim:general}.
    \smallskip
    
    \textbf{Step 2: Alternative characterization of $b_{i,j}$}. Let us investigate the condition for the existence of such a lift and provide an alternative characterization of $b_{i,j}$ in terms of morphisms from $G_i$ to $G_j$, which can then be used to prove the first assertion in \cref{claim:general}.
    For $b \le \tilde{b}_{i,j}$, we consider the following commutative diagram with solid arrows:
    \[
    \begin{tikzcd}
        G_i \ar[r] \ar[d] & T_{-b}G_j \ar[r] \ar[d] & \Cone(\tilde{\psi}_{i,j;b}) \ar[r] \ar[d,dotted] & G_i[1] \ar[d] \\
        G_i \ar[r] \ar[d] & G_j \ar[r] \ar[d] & \Cone(\psi_{i,j}) \ar[r] \ar[d,dotted] & G_i[1] \ar[d] \\
        0 \ar[r,dotted] \ar[d] & \bfk_{M \times [-b,0)} \star G_j[1] \ar[r,dotted] \ar[d] & H \ar[r,dotted] \ar[d,dotted] & 0 \ar[d] \\
        G_i[1] \ar[r] & T_{-b}G_j[1] \ar[r] & \Cone(\tilde{\psi}_{i,j;b})[1] \ar[r] & G_i[2]. 
    \end{tikzcd}
    \]
    Then by \cite[Exercise~10.6]{KS06} in the triangulated setting \cite[Lem.~3.6 Example~(3)]{AnnoLogvinenko} in the dg enhanced setting, the dotted arrows can be completed so that the right bottom square is anti-commutative, all the other squares are commutative, and all the rows and all the columns are exact triangles. 
    Hence we have $H \simeq \bfk_{M \times [-b,0)} \star G_j[1]$.
    The third column and $\varphi_{i,j} \colon G_i[1] \to \Cone(\psi)$ fit into the following diagram with solid arrows: 
    \[
    \begin{tikzcd}
        \Cone(\tilde{\psi}_{i,j;b}) \ar[d] & \\
        \Cone(\psi_{i,j}) \ar[d] & G_i[1] \ar[l,"{\varphi_{i,j}}"] \ar[lu,dotted] \\
        \bfk_{M \times [-b,0)} \star G_j[1] \ar[d] \\
        \Cone(\tilde{\psi}_{i,j;b})[1] & 
    \end{tikzcd}
    \]
    Hence, a lift $G_i[1] \to \Cone(\tilde{\psi}_{i,j;b})$ exists if and only if the composite $G_i[1] \xrightarrow{\varphi_{i,j}} \Cone(\psi_{i,j}) \to \bfk_{M \times [-b,0)} \star G_j[1]$ is zero. 
    Moreover, since $\varphi_{i,j}$ is a null homotopy, such a lift $\tilde{\varphi}_{i,j;b}$ automatically satisfies the condition that $G_i[1] \xrightarrow{\tilde{\varphi}_{i,j;b}} \Cone(\tilde{\psi}_{i,j;b}) \to G_i[1]$ is the identity by the following commutative diagram: 
    \[
    \begin{tikzcd}
        G_i[1] \ar[d, equal] & \Cone(\tilde{\psi}_{i,j;b}) \ar[d] \ar[l] & \\
        G_i[1] & \Cone(\psi_{i,j}) \ar[l] & G_i[1]. \ar[l, "{\varphi_{i,j}}"] \ar[lu, "{\tilde{\varphi}_{i,j;b}}"'] \ar[ll, bend left, "\id"']
    \end{tikzcd}
    \]
    As a result, we find that 
    \begin{equation}\label{eqn:characterization-of-bij}
        b_{i,j} = \max \lc b \in [0,\tilde{b}_{i,j}] \relmid \text{$G_i[1] \xrightarrow{\varphi_{i,j}} \Cone(\psi_{i,j}) \to \bfk_{M \times [-b,0)} \star G_j[1]$ is zero} \rc.
    \end{equation}
    We fix a maximal lift $\tilde{\varphi}_{i,j;b_{i,j}} \colon G_i \to T_{-b_{i,j}}G_j$.
    \smallskip

    \textbf{Step 3: Proof of the claim}. Given the alternative characterization of $b_{i,j}$, we can now prove the assertions in \cref{claim:general} as follows based on the induction hypothesis.

    (i) Consider the first assertion. First, by \eqref{eqn:definition_tildeb} and the induction hypothesis, we have
    \begin{equation}\label{eq:tildeb_set}
        \tilde{b}_{i,j} \in 
        \lc 
            \cA_{i,i_1}(p_{0,i_1}) + \cA_{i_1,i_2}(p_{i_1,i_2}) + \dots + \cA_{i_k,j}(p_{i_k,j})                
        \relmid
        \begin{aligned}
            & i < i_1 < \dots < i_k < j \ (k \ge 1)\\
            & p_{s,t} \in L_s \cap L_t
        \end{aligned}
        \rc
    \end{equation}
    Now we prove the assertion by splitting it into two cases.
    
    Suppose $b_{i,j}=\tilde{b}_{i,j}$. Then the first assertion of \cref{claim:general} follows by induction. 
    In this case, by \eqref{eq:bij_set} we find that the energy of ${\varphi_{i,j}}_*$ is
    \begin{equation}\label{eqn:energy_including_0}
         \min \lc \cA_{j}(p_{j}) - \cA_{i}(p_{i}) - \tilde{b}_{i,j} \ge 0 \relmid p_s \in N \cap L_s \ (s=i,j) \rc.
    \end{equation}
    
    Suppose $b_{i,j} < \tilde{b}_{i,j}$.
    In this case, we cannot extend a lift beyond $b_{i,j}$, which by \eqref{eqn:characterization-of-bij} means that $\Hom(G_i, \bfk_{M \times [-b_{i,j}-\varepsilon,0)} \star G_j) \to \Hom(G_i, \bfk_{M \times [-b_{i,j},0)} \star G_j)$ fails to be an isomorphism for $\varepsilon > 0$ sufficiently small. 
    By the exact triangle $\bfk_{M \times [-b,0)} \star G_j \to T_{-b}G_j \to G_j \toone$, this should happen only when $\Hom(G_i,T_{-b_{i,j}-\varepsilon}G_j) \to \Hom(G_i,T_{-b_{i,j}}G_j)$ fails to be an isomorphism. Note that
    \begin{equation*}
        \Gamma_{[a,+\infty)}(\pi_* \cHom^\star(G_0, G_1))_{a} = \Cone(\Hom(G_0, T_{a-\varepsilon}G_1) \to \Hom(G_0, T_{a}G_1)).
    \end{equation*}
    Hence by \cref{lem:ike-intersection} we get
    \begin{equation}\label{eq:b_set}
        b_{i,j} \in \lc \cA_{i,j}(p_{i,j}) \relmid p_{i,j} \in L_i \cap L_j \rc
    \end{equation}
    and the first assertion in \cref{claim:general} follows.
    In this case, by \eqref{eq:bij_set} we also find that the energy of ${\varphi_{i,j}}_*$ is at least
    \begin{equation}\label{eqn:energy_including_0_2}
         \min \lc \cA_{j}(p_{j}) - \cA_{i}(p_{i}) - b_{i,j} \ge 0 \relmid p_s \in N \cap L_s \ (s=i,j) \rc.
    \end{equation}

    (ii) Consider the second assertion. By the induction hypothesis, we know that the tower induced by the lifts
    \begin{align}
        \big[& \pi_*\cHom^\star(F_N, G_{i}) \to \pi_*\cHom^\star(F_N, T_{-b_{i,i+1}}G_{i+1}) \to \cdots \nonumber \\
        & \to \pi_*\cHom^\star(F_N, T_{-b_{i,j-2}}G_{j-2}) \to \pi_*\cHom^\star(F_N, T_{-b_{i,j-1}}G_{j-1}) \big] \label{eq:lift-of-complex}
    \end{align} 
    is a twisted complex formed by iterated mapping cones. By \eqref{eq:def-induction-b_ij}, there is a null homotopy $\tilde{\varphi}_{i,j; b_{i,j}}$ that lifts $\varphi_{i,j}$. Since $b_{i,t} - b_{i,s} \leq b_{s,t}$, by induction hypothesis and \eqref{eq:def-induction-b_ij}, there is a null homotopy $\tilde{\varphi}_{i,j; b_{i,t} - b_{i,s}}$ that lifts $\varphi_{s,t}$. Therefore, the tower induced by the lifts
    \begin{align*}
        \big[ & \pi_*\cHom^\star(F_N, G_{i}) \to \pi_*\cHom^\star(F_N, T_{-b_{i,i+1}}G_{i+1}) \to \cdots \nonumber \\
        & \to \pi_*\cHom^\star(F_N, T_{-b_{i,j-1}}G_{j-1}) \to \pi_*\cHom^\star(F_N, T_{-b_{i,j}}G_{j}) \big]
    \end{align*} 
    is still a twisted complex formed by iterated mapping cones. 
    This proves the second assertion of \cref{claim:general}.
    
    (iii) Consider the third assertion of \cref{claim:general}. By \eqref{eqn:energy_including_0} and \eqref{eqn:energy_including_0_2}, we only need to exclude the case of energy zero. By the definition of the energy, that means we only need to show that the complex \eqref{eq:lift-of-complex} induced by the lifts of chain maps does not introduce any cancellation of endpoints.
    Consider \cref{lem:ike-intersection} and let 
    \begin{equation*}
        \Lambda_N = \Lambda_N, \;\;
        \Lambda_L =\, \Lambda_{L_i} \cup T_{-b_{i,i+1}}(\Lambda_{L_{i+1}}) \cup \dots \cup T_{-b_{i,j}}(\Lambda_{L_{j}}).
    \end{equation*}
    Since $N \cap L_s \cap L_{t} = \varnothing$ for any $0 \leq s, t \leq n-1$, we know that $\Lambda_N$ and $\Lambda_L$ intersect cleanly, and the sheaf $[ G_i \to T_{-b_{i,i+1}}G_{i+1} \to \dots \to T_{-b_{i,j}}G_{j}  ]$ is simple along the smooth strata of $\Lambda_L$. Therefore, by \cref{lem:ike-intersection}, the number of end points of bars of 
    \begin{align*}
        \big[ & \pi_*\cHom^\star(F_N, G_i) \to \pi_*\cHom^\star(F_N, T_{-b_{i,i+1}}G_{i+1}) \to \cdots \nonumber \\
        & \to \pi_*\cHom^\star(F_N, T_{-b_{i,j-1}}G_{j-1}) \to \pi_*\cHom^\star(F_N, T_{-b_{i,j}}G_{j}) \big]
    \end{align*}
    is $\sum_{s=i}^{j}\sum_{n}\dim H^n(N \cap L_s; \bfk)$, and we conclude that there are no cancellations of endpoints.
    Hence, in each case, we find that the energy of ${\varphi_{i,j}}_*$ is at least
    \begin{equation*}
        \min \lc \cA_j(p_j) - \cA_i(p_i) - b_{i,j} > 0 \mid p_s \in N \cap L_s \ (s = i, j) \rc,
    \end{equation*}
    which proves the third assertion of \cref{claim:general}.   
\end{proof}

\begin{remark}
    In the proof, we fixed an identification (recall that $B(N\cap L_i)$ is a basis of $H_*(N \cap L_i; \bfk)$)
    \begin{equation*}
        \pi_*\cHom^\star(F_N,G_i) \simeq \lb \bigoplus\nolimits_{p \in B(N \cap L_i)} \bfk_{[\cA_i({p}),\infty)}[k_p], {d_i}_* \rb, 
    \end{equation*}
    which is not necessarily induced by a chain map. 
    Therefore, we need to fix chain maps that represent ${\varphi_{i,j}}_*$'s (and the morphisms induced by $\tilde{\varphi}_{i,j:b}$'s) by hand.
    
    More precisely, consider the dg-category $\cC$ formed by finite complexes of direct sums of a finite number of $\bfk_{[a,\infty)}$'s and the chain maps between them.
    The natural functor $i \colon \cC \to \cT(\pt)$ is fully faithful and hence equivalent to its essential image $\cD$, which contains $\pi_*\cHom^\star(F_N,G_i)$'s. There is no inverse dg-functor of the equivalence $\cC \to \cD$.
    
    However, there is an $A_\infty$-inverse functor $\cD \to \cC$. This inverse functor allows us to induce morphisms that represent ${\varphi_{i,j}}_*$'s in a functorial way. The higher operations of the $A_\infty$-operations will appear in components of~\eqref{eqn:twist-complex-null-homotopy}. This seems to be consistent with the description in \cite{BCS18}.
\end{remark}

\begin{remark}\label{rem:morse-bott-intersect}
    More generally, in \cref{theorem:intersection}, assuming all the pairwise intersections have finitely many connected components, we have
    \begin{equation*}
        \sum_{c\in \bR,n\in \bZ} \dim H^n(\Omega_+;\mu hom(F_N,T_c F_{L'})) \ge \sum_{c\in \bR,n\in \bZ} \sum_{i=0}^{m-1} \dim H^n(\Omega_+;\mu hom(F_N,T_c F_{L_i})),
    \end{equation*}
    where $\mu hom$ is a functor defined in \cite[Section~4.4]{KS90}.
    In the case of clean intersection, $H^n(\Omega_+;\mu hom(F_N,T_c F_{L_i}))$ computes the cohomology of $N \cap L_i$ (see \cite[Thm.~4.14]{Ike19}, for example). 
\end{remark}

\paragraph{Acknowledgements}

TA thanks Kaoru Ono for the helpful discussions.
TA was partially supported by Innovative Areas Discrete Geometric Analysis for Materials Design (Grant No.~JP17H06461).
YI thanks Alexandru Oancea for the fruitful discussions.
He also thanks Paul Biran, Octav Cornea, and Jun Zhang for helpful discussions and clarifications. 
YI was partially supported by JSPS KAKENHI Grant Numbers JP21K13801 and JP22H05107.
TA and YI were partially supported by JST, CREST Grant Number JPMJCR24Q1, Japan. 
WL thanks Xin Jin and David Treumann for helpful discussions and clarifications and Bingyu Zhang for helpful discussions.
The authors thank the anonymous referee for the helpful comments, which improved the exposition.

\printbibliography


\noindent Tomohiro Asano: 
Research Institute for Mathematical Sciences, Kyoto University, \linebreak Kitashirakawa-Oiwake-Cho, Sakyo-ku, 606-8502, Kyoto, Japan.

\noindent \textit{E-mail address}: \texttt{tasano@kurims.kyoto-u.ac.jp}, \texttt{tomoh.asano@gmail.com}

\medskip

\noindent Yuichi Ike:
Institute of Mathematics for Industry, Kyushu University, 744 Motooka, Nishi-ku, Fukuoka-shi, Fukuoka 819-0395, Japan.

\noindent
\textit{E-mail address}: \texttt{ike@imi.kyushu-u.ac.jp}, \texttt{yuichi.ike.1990@gmail.com}

\medskip 

\noindent Wenyuan Li:
Department of Mathematics, University of Southern California, 3620 S Vermonth Ave, Los Angeles, CA 90089, United States.

\noindent
\textit{E-mail address}: \texttt{wenyuan.li@usc.edu}

\end{document}